\documentclass[a4paper]{article}


\usepackage[english]{babel}
\usepackage[utf8]{inputenc}
\usepackage[T1]{fontenc}


\usepackage{mathtools}
\usepackage{amsthm}
\usepackage{amsfonts}
\usepackage{amssymb}
\usepackage{stmaryrd}


\usepackage[margin=3cm]{geometry}
\usepackage[pdftex,pdfborder={0 0 0},linktoc=all]{hyperref}
\usepackage{pdfpages}
\usepackage{color}


\usepackage{subfig}
\usepackage{pgf,tikz}
\usetikzlibrary{arrows}
\usetikzlibrary{cd}
\usetikzlibrary{graphs}
\usepackage[figurename=Figure]{caption}


\theoremstyle{plain}
	\newtheorem{thm}{Theorem}[section]
	\newtheorem{cor}[thm]{Corollary}
	\newtheorem{lem}[thm]{Lemma}
	\newtheorem{prop}[thm]{Proposition}

\theoremstyle{definition}
	\newtheorem{dfn}[thm]{Definition}
	\newtheorem*{ques}{Question}

\theoremstyle{remark}
	\newtheorem{rem}[thm]{Remark}
	\newtheorem{ex}[thm]{Example}

\numberwithin{equation}{section}


\newcommand{\C}{\mathbb{C}}
\newcommand{\N}{\mathbb{N}}
\renewcommand{\P}{\mathbb{P}}

\newcommand{\R}{\mathbb{R}}
\renewcommand{\S}{\mathbb{S}}

\newcommand{\cC}{\mathcal{C}}
\newcommand{\cD}{\mathcal{D}}
\newcommand{\cG}{\mathcal{G}}
\newcommand{\cI}{\mathcal{I}}
\newcommand{\cJ}{\mathcal{J}}
\newcommand{\cMJ}{\mathcal{MJ}}
\newcommand{\cO}{\mathcal{O}}
\newcommand{\cP}{\mathcal{P}}

\newcommand{\ut}{\underline{t}}
\newcommand{\ux}{\underline{x}}
\newcommand{\uy}{\underline{y}}

\newcommand{\hux}{\hat{\underline{x}}}

\renewcommand{\bar}{\overline}
\renewcommand{\epsilon}{\varepsilon}
\renewcommand{\geq}{\geqslant}
\renewcommand{\leq}{\leqslant}
\renewcommand{\tilde}{\widetilde}
\renewcommand{\hat}{\widehat}

\newcommand{\bl}[2]{\Bl_{#2}\parentheses*{#1}}
\newcommand{\config}{(\R^n)^p \setminus \Delta_p}
\newcommand{\configC}{(\C^n)^p \setminus \Delta_p}
\newcommand{\dx}{\dmesure\!}
\newcommand{\esp}[2][]{\mathbb{E}_{#1}\squarebrackets*{#2}}
\newcommand{\espcond}[3][]{\mathbb{E}_{#1}\squarebrackets*{#2 \mvert #3}}
\newcommand{\gauss}[1]{\mathcal{N}\parentheses*{0,#1}}
\newcommand{\gaussC}[1]{\mathcal{N}_\C\parentheses*{0,#1}}
\newcommand{\gr}[2][{\R_{p-1}[X]}]{\Gr_{#2}\parentheses*{#1}}
\newcommand{\jac}[1]{\Jac\parentheses*{#1}}
\newcommand{\jacC}[1]{\Jac_\C\parentheses*{#1}}
\newcommand{\mvert}{\mathrel{}\middle|\mathrel{}}
\newcommand{\one}{\mathbf{1}}
\newcommand{\trans}[1]{\prescript{\text{t}}{}{#1}}
\newcommand{\var}[1]{\Var\parentheses*{#1}}
\newcommand{\varC}[1]{\Var_\C\parentheses*{#1}}

\DeclareMathOperator{\Bl}{Bl}

\DeclareMathOperator{\dmesure}{d}
\DeclareMathOperator{\ev}{ev}
\DeclareMathOperator{\Gr}{Gr}
\DeclareMathOperator{\Id}{Id}
\DeclareMathOperator{\Jac}{Jac}
\DeclareMathOperator{\jet}{j}
\DeclareMathOperator{\mj}{mj}
\DeclareMathOperator{\Span}{Span}
\DeclareMathOperator{\sym}{Sym}
\DeclareMathOperator{\Var}{Var}
\DeclareMathOperator{\Vol}{Vol}

\DeclarePairedDelimiter{\brackets}{\{}{\}}

\DeclarePairedDelimiter{\norm}{\lvert}{\rvert}
\DeclarePairedDelimiter{\Norm}{\lVert}{\rVert}
\DeclarePairedDelimiter{\parentheses}{(}{)}
\DeclarePairedDelimiterX{\prsc}[2]{\langle}{\rangle}{#1\,, #2}
\DeclarePairedDelimiter{\squarebrackets}{[}{]}
\DeclarePairedDelimiterX{\ssquarebrackets}[2]{\llbracket}{\rrbracket}{#1,#2}



\author{Michele Ancona\,\thanks{Michele Ancona, Université Côte d’Azur, CNRS, LJAD, France; e-mail: \url{michele.ancona@unice.fr}.} \and Thomas Letendre\,\thanks{Thomas Letendre, Université Paris-Saclay, CNRS, LMO, France; e-mail: \url{letendre@math.cnrs.fr}. Partially supported by the ANR Grant UniRanDom (ANR-17-CE40-0008) and a PEPS Grant from the CNRS INSMI.}}
\date{\today}
\title{Multijet bundles and application to the finiteness of moments for zeros of Gaussian fields}


\begin{document}

\maketitle

\begin{abstract}
We define a notion of multijet for functions on $\R^n$, which extends the classical notion of jets in the sense that the multijet of a function is defined by contact conditions at several points. For all $p \geq 1$ we build a vector bundle of $p$-multijets, defined over a well-chosen compactification of the configuration space of $p$ distinct points in $\R^n$. As an application, we prove that the linear statistics associated with the zero set of a centered Gaussian field on a Riemannian manifold have a finite $p$-th moment as soon as the field is of class~$\cC^p$ and its $(p-1)$-jet is nowhere degenerate. We prove a similar result for the linear statistics associated with the critical points of a Gaussian field and those associated with the vanishing locus of a holomorphic Gaussian field.
\end{abstract}

\paragraph{Keywords:} Compactification of configuration spaces, Gaussian fields, Moments, Multijets, Random submanifolds.

\paragraph{MSC 2020:} 51M15, 51M25, 55R80, 58A20, 60D05, 60G60.


\tableofcontents


\section{Introduction}
\label{sec: Introduction}

This paper is concerned with two different but related problems. The first one is to define a natural notion of multijet for a $\cC^k$ function on $\R^n$, generalizing the usual notion of $k$-jet. By multijet we mean that we want to consider a collection of jets at different points in $\R^n$ and patch them together in a relevant way. The second one is to find natural conditions on a Gaussian field $f:\R^n \to \R^r$ ensuring that the $(n-r)$-dimensional volume of $f^{-1}(0) \cap \mathbb{B}$ admits finite higher moments, where $\mathbb{B}$ stands for the unit ball in $\R^n$. One way to tackle this second problem is by considering the multijet of the random field $f$. In the following, we give more details about our contributions concerning the previous two problems, as well as some variations on these questions.


\subsection{Multijet bundles}
\label{subsec: Multijet bundles}

Let us start by recalling some standard facts about jets. See~\cite{Sau1989} for background on this matter. Let $x \in \R^n$ and $k \geq 0$. Two smooth functions $f$ and $g$ on $\R^n$ are said to have the same $k$-jet at $x$ if $f-g$ vanishes at $x$, as well as all its partial derivatives up to order $k$. Having the same $k$-jet at $x$ is an equivalence relation on $\cC^\infty(\R^n)$, and the space $\cJ_k(\R^n)_x$ of $k$-jets at~$x$ is the set of equivalence classes for this relation. We denote by $\jet_k(f,x)$ the $k$-jet of $f$ at~$x$, that is, its class in $\cJ_k(\R^n)_x$. The map $\jet_k(\cdot,x)$ is a linear surjection from $\cC^\infty(\R^n)$ onto the  finite-dimensional vector space $\cJ_k(\R^n)_x$. Of course, $\jet_k(f,x)$ makes sense even if $f$ is only $\cC^k$ and defined on some neighborhood of~$x$.

Considering the family of $k$-jet spaces for all $x \in \R^n$, the set $\cJ_k(\R^n) = \bigsqcup_{x \in \R^n} \cJ_k(\R^n)_x$ is equipped with a natural vector bundle structure over $\R^n$. Then, if $\Omega \subset \R^n$ is open and $f:\Omega \to \R$ is $\cC^k$, the map $\jet_k(f,\cdot)$ is a local section of $\cJ_k(\R^n) \to \R^n$ over $\Omega$. These definitions are well-behaved with respect to smooth changes of coordinates, so one can define similarly the vector bundle of $k$-jets of functions on a manifold $M$. More generally, if $E \to M$ is a vector bundle over $M$, there is a corresponding vector bundle $\cJ_k(M,E) \to M$ of $k$-jets of sections of $E \to M$.

In this paper, we are interested in defining similarly a notion of multijet and the associated vector bundles. That is, we want to consider smooth functions on $\R^n$ up to an equivalence relation defined by the vanishing of some derivatives at several points.

Let us make this more precise. Let $p \geq 1$ and $\Delta_p = \brackets*{\parentheses{x_1,\dots,x_p} \in (\R^n)^p \mvert \exists i\neq j \ \text{s.t.} \ x_i = x_j}$ denote the diagonal in $(\R^n)^p$. Given $\ux = \parentheses{x_1,\dots,x_p} \notin \Delta_p$, we say that $f$ and $g$ have the same multijet at $\ux$ if $f(x_i)=g(x_i)$ for all $i \in \ssquarebrackets{1}{p}$ (here we use the notation $\ssquarebrackets{a}{b} = \squarebrackets{a,b} \cap \N$). This is an equivalence relation on functions, defined by the vanishing of $f-g$ on the set $\brackets{x_1,\dots,x_p} \subset \R^n$, that is, by $p$ independent linear conditions. Thus the corresponding set of classes is a vector space of dimension $p$, which we denote by $\cMJ_p(\R^n)_{\ux}$. We also denote by $\mj_p(f,\ux)$ the class of $f$ in this space, that is, its multijet at $\ux$.

As will be explained later, this defines a vector bundle $\cMJ_p(\R^n)$ of rank $p$ over $\config$. Moreover, for all $\ux \notin \Delta_p$ the linear map $\mj_p(\cdot,\ux):\cC^\infty(\R^n)\to \cMJ_p(\R^n)_{\ux}$ is surjective, and, for all smooth $f$, its multijet $\mj_p(f,\cdot)$ is smooth. We would like to extend this picture over the whole of $(\R^n)^p$. Note that the surjectivity conditions rule out defining $\cMJ_p(\R^n)$ as $\cJ_0(\R^n)^p$ with $\mj_p(f,\ux) = \parentheses*{\jet_0(f,x_i)}_{1 \leq i \leq p}$. When $\ux \notin \Delta_p$, the previous notion of multijet is defined by $p$ independent linear conditions: vanishing at each of the $x_i$. The main issue is that, when $\ux \in \Delta_p$, these conditions are no longer independent and we need to replace them by another $p$-tuple of independent conditions.

A first natural idea is to look at vanishing with multiplicities. In dimension $n=1$ this works very well. Let $\ux \in \R^p$ be a permutation of $(y_1,\dots,y_1,\dots,y_m,\dots,y_m)$ where $(y_j)_{1 \leq j \leq m} \in \R^m \setminus \Delta_m$ and $y_j$ appears exactly $(k_j+1)$ times. We say that $f$ and $g$ have the same multijet at $\ux$ if $(f-g)^{(k)}(y_j)=0$ for all $j \in \ssquarebrackets{1}{m}$ and $k \in \ssquarebrackets{0}{k_j}$. In this sense, having the same multijet is equivalent to having the same Hermite interpolating polynomials at $\ux$. Thus, we can define $\cMJ_p(\R)$ as the trivial bundle $\R_{p-1}[X] \times \R^p \to \R^p$ and $\mj_p(f,\ux)$ as the Hermite interpolating polynomial of $f$ at $\ux$.

If $n>1$, the previous approach fails already for $p=2$. Let us consider $x \in \R^n$ and the corresponding $\ux=(x,x)\in \Delta_2$. Asking for the vanishing of $f-g$ and its differential at $x$ gives us $(n+1)$ independent conditions, which define the $1$-jet space $\cJ_1(\R^n)_x$. This space has dimension $n+1 >2$; hence it is too large to be the $\cMJ_2(\R^n)_{\ux}$ we are looking for. The next natural idea is to ask only for the vanishing at $x$ of $f-g$ and one of its directional derivatives. But whatever choice of directional derivative we make will lead to $\mj_2(f,\cdot)$ not being continuous at~$x$ for most $f \in \cC^\infty(\R^n)$. Actually, one cannot extend $\cMJ_2(\R^n)$ nicely over $(\R^n)^2$ if $n>1$. However, we can extend it nicely over a larger space: the blow-up $\bl{(\R^n)^2}{\Delta_2}$ of $(\R^n)^2$ along $\Delta_2$. The key idea is that $\bl{(\R^n)^2}{\Delta_2}$ contains $(\R^n)^2 \setminus \Delta_2$ as a dense open subset and that points in the complement of $(\R^n)^2 \setminus \Delta_2$ can be described by the following data: a base point $x \in \R^n$ and a direction $u \in \R\P^{n-1}$. This data tells us exactly which directional derivative to consider at the corresponding point in the exceptional locus of $\bl{(\R^n)^2}{\Delta_2}$. We will come back to this example later; see Example~\ref{ex: multijet bundle}.

This long discussion shows that there is a natural way to define a multijet bundle $\cMJ_p(\R^n)$ over the configuration space $\config$, but that it does not extend nicely over $(\R^n)^p$ in general. The case $p=2$ hints that it might however be possible to define a natural multijet bundle over a slightly larger space, containing a copy of $\config$ as a dense open subset. Our first main contribution is to define such an object. Its main properties are gathered in the following statement, where $\cC^k(\R^n,V)$ denotes the space of $\cC^k$ functions from $\R^n$ to $V$.

\begin{thm}[Existence of multijet bundles]
\label{thm: multijet bundle}
Let $n \geq 1$ and $p \geq 1$ and let $V$ be a real vector space of finite dimension $r\geq 1$. There exist a smooth manifold $C_p[\R^n]$ of dimension $np$ without boundary and a smooth vector bundle $\cMJ_p(\R^n,V) \to C_p[\R^n]$ of rank $rp$ with the following properties.

\begin{enumerate}

\item \label{item: thm multijet pi} There exists a smooth proper surjection $\pi:C_p[\R^n] \to (\R^n)^p$ such that $\pi^{-1}\parentheses*{\config}$ is a dense open subset of $C_p[\R^n]$, and $\pi$ restricted to $\pi^{-1}\parentheses*{\config}$ is a $\cC^\infty$-diffeomorphism onto $\config$.

\item \label{item: thm multijet mj} There exists a map $\mj_p:\cC^{p-1}(\R^n,V) \times C_p[\R^n] \to \cMJ_p(\R^n,V)$ such that:
\begin{itemize}
\item for all $z \in C_p[\R^n]$, the linear map $\mj_p(\cdot,z):\cC^{p-1}(\R^n,V) \to \cMJ_p(\R^n,V)_z$ is surjective;
\item for all $f \in \cC^{l+p-1}(\R^n,V)$, the section $\mj_p(f,\cdot)$ of $\cMJ_p(\R^n,V) \to C_p[\R^n]$ is~$\cC^l$.
\end{itemize}

\item \label{item: thm multijet ev} Let $z \in C_p[\R^n]$ be such that $\pi(z)=(x_1,\dots,x_p) \notin \Delta_p$ then for all $f \in \cC^{p-1}(\R^n,V)$ we have:
\begin{equation*}
\mj_p(f,z) = 0 \iff \forall i \in \ssquarebrackets{1}{p}, f(x_i)=0.
\end{equation*}

\item \label{item: thm multijet localness} Let $z \in C_p[\R^n]$ be such that $\pi(z)$ is obtained as a permutation of $(y_1,\dots,y_1,\dots,y_m,\dots,y_m)$ where $y_j$ appears exactly $(k_j+1)$ times and $y_1,\dots,y_m$ are pairwise distinct vectors in $\R^n$. Then, there exists a linear surjection $\Theta_z : \prod_{i=1}^m \cJ_{k_j}(\R^n,V)_{y_j} \to \cMJ_p(\R^n,V)_z$ such that
\begin{equation*}
\forall f \in \cC^{p-1}(\R^n,V), \qquad \mj_p(f,z) = \Theta_z\parentheses*{\jet_{k_1}(f,y_1),\dots,\jet_{k_m}(f,y_m)}.
\end{equation*}

\end{enumerate}

\end{thm}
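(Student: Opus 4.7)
The plan is to construct $C_p[\R^n]$ as an iterated blow-up of $(\R^n)^p$ along partial diagonals, in the spirit of the Fulton--MacPherson and De Concini--Procesi compactifications, and to build the bundle $\cMJ_p(\R^n,V)$ together with the section $\mj_p(f,\cdot)$ inductively on~$p$. For each $I \subset \ssquarebrackets{1}{p}$ with $\norm{I} \geq 2$, let $\Delta_I = \brackets*{\ux \in (\R^n)^p \mvert x_i = x_j \text{ for all } i,j \in I}$, and define $C_p[\R^n]$ as the iterated blow-up of $(\R^n)^p$ obtained by blowing up the deepest diagonal first and then the strict transforms of the $\Delta_I$ in order of decreasing~$\norm{I}$. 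At each step the center is a smooth submanifold meeting the already created exceptional divisors transversely, so the outcome is a smooth manifold of dimension $np$ without boundary. The composition of blow-down maps yields a smooth proper surjection $\pi:C_p[\R^n] \to (\R^n)^p$ that restricts to a diffeomorphism over $\config$, which settles property~(\ref{item: thm multijet pi}).

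Over $\pi^{-1}(\config)$, one sets $\cMJ_p(\R^n,V) = V^p$ with $\mj_p(f,z) = \parentheses*{f(x_1),\dots,f(x_p)}$ at $\pi(z)=(x_1,\dots,x_p)$. To extend across the exceptional locus I would proceed recursively along the blow-up tower: a point of the exceptional divisor above $\Delta_I$ encodes a base point $x \in \R^n$ together with a \emph{screen}, namely a configuration of the colliding indices at a finer scale, living in a lower-dimensional configuration space with its own iterated blow-up structure. Using this screen data, one replaces each coincidence of evaluations by the appropriate directional derivative of $f$ along the collision direction; in concrete local coordinates on a chart of the blow-up tower, this amounts to suitably normalized divided differences of $f$, where the normalization is by the local equations cutting out the iterated exceptional divisors. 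The fact that this produces a well-defined $\R$-linear map $\mj_p(\cdot,z) : \cC^{p-1}(\R^n,V) \to \cMJ_p(\R^n,V)_z$ of rank $rp$ at every $z$ follows by induction on $p$ and matches the formula defined on the open stratum.

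The main obstacle will be proving that this recursive construction actually assembles into a genuine $\cC^\infty$ vector bundle of constant rank $rp$, and that $\mj_p(f,\cdot)$ is a $\cC^l$ section whenever $f \in \cC^{l+p-1}(\R^n,V)$. The key analytic input is the classical divided-difference estimate, equivalent to iterated Hadamard's lemma and Taylor's theorem with integral remainder: an order $k$ divided difference of a $\cC^{k+l}$ function extends to a $\cC^l$ function after the diagonal where the evaluation points coincide has been blown up. A bookkeeping argument chart by chart reduces the smoothness claim to this fact and, at the same time, shows independence of the auxiliary choices (order of blow-ups, local coordinates, trivializations of the exceptional divisors as projective bundles). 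Property~(\ref{item: thm multijet ev}) is immediate from the open-stratum formula. Finally, for property~(\ref{item: thm multijet localness}), the inductive structure shows that at a point $z$ projecting to a multiplicity pattern $(k_j+1)$, only the blow-ups resolving the collisions above each $y_j$ contribute to $\mj_p(f,z)$, so the dependence of $\mj_p(f,z)$ on $f$ factors through its $k_j$-jet at $y_j$; the resulting linear map $\Theta_z : \prod_{j=1}^m \cJ_{k_j}(\R^n,V)_{y_j} \to \cMJ_p(\R^n,V)_z$ is surjective for dimensional reasons, since $\mj_p(\cdot,z)$ already is.
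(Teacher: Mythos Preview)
Your approach is genuinely different from the paper's, and the gap lies precisely at the step you flag as ``the main obstacle.'' The paper does not use the Fulton--MacPherson blow-up at all. Instead, it first reduces from $\cC^{p-1}$ functions to polynomials via Kergin interpolation (the multivariate analogue of Hermite interpolation), then considers the map $\cG:\ux\mapsto \ker\bigl(\ev_{\ux}:\R_{p-1}[X]\to\R^p\bigr)$ into a Grassmannian, takes the Zariski closure $\bar\Sigma$ of the graph of $\cG$ (together with its restrictions $\cG_I$), and defines $C_p[\R^n]$ as a Hironaka resolution of $\bar\Sigma$. The bundle is then the tautological quotient $(\R_{p-1}[X]\times C_p[\R^n])/\cG$, and $\mj_p(f,z)=K(f,\pi(z))\bmod\cG(z)$. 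With this definition, the extension of $\cG$ and the smoothness of $\mj_p(f,\cdot)$ are automatic by construction; properties~(\ref{item: thm multijet ev}) and~(\ref{item: thm multijet localness}) follow from the interpolation properties of the Kergin polynomial.

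Your proposal instead asserts that the Fulton--MacPherson tower already resolves the singularities of $\cG$, and that the multijet can be built chart by chart from ``suitably normalized divided differences.'' In dimension $n=1$ this is essentially correct and classical. In dimension $n\geq 2$, however, divided differences of the type you invoke are not canonically defined: at a screen consisting of $p\geq 3$ points in general position in $\R^n$, there is no preferred choice of $p$ independent linear functionals on the jet space, and you never specify which one you take or why the choice glues smoothly between charts. The paper explicitly remarks that the standard compactifications in the literature, including Fulton--MacPherson, did not suit this purpose in higher dimensions; whether or not that is ultimately true, you have not verified that $\cG$ extends smoothly to your blow-up, and the phrases ``appropriate directional derivative'' and ``bookkeeping argument chart by chart'' are exactly where a proof is needed. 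The virtue of the paper's route is that Hironaka's theorem and the Kergin polynomial together make the extension and the smoothness tautological, at the cost of $C_p[\R^n]$ being non-explicit and non-unique.
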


\begin{rem}
\label{rem: thm multijet}
In Theorem~\ref{thm: multijet bundle}, the manifold $C_p[\R^n]$ does not depend on $V$. Condition~\ref{item: thm multijet pi} means that we can consider $\config$ as a dense open subset in $C_p[\R^n]$. Condition~\ref{item: thm multijet mj} consists of properties that we expect any reasonable notion of multijet to satisfy. Condition~\ref{item: thm multijet ev} means that, as in the previous discussions, if $\pi(z) \notin \Delta_p$ then $\cMJ_p(\R^n)_z = \cC^{p-1}(\R^n,V)/\sim$, where $f \sim g$ if and only if $f(x_i)=g(x_i)$ for all $i \in \ssquarebrackets{1}{p}$. Condition~\ref{item: thm multijet localness} means that, more generally, $\mj_p(f,z)$ only depends on the collection of jets $\parentheses{\jet_{k_j}(f,y_j)}_{1 \leq j \leq m}$. In particular, $\mj_p(f,z)$ still makes sense if $f$ is only $\cC^{k_j}$ on some neighborhood of $y_j$. This last condition also means that we can think of $\mj_p(f,z)$ intuitively as a family of $p$ independent linear combinations of partial derivatives of $f$, up to order $k_j$ at $y_j$. However this family is neither explicit nor unique in general.
\end{rem}

Let us now introduce some definitions and notation.

\begin{dfn}[Multijets]
\label{def: multijets}
Let $\Omega \subset \R^n$ be open, we let $C_p[\Omega] = \pi^{-1}(\Omega^p)$ and denote by $\cMJ_p(\Omega,V) \to C_p[\Omega]$ the restriction of $\cMJ_p(\R^n,V)$ to $C_p[\Omega]$.
We call $\cMJ_p(\Omega,V) \to C_p[\Omega]$ the \emph{bundle of $p$-multijets} of functions from $\Omega$ to $V$. Its fiber $\cMJ_p(\R^n,V)_z$ above $z \in C_p[\Omega]$ is the \emph{space of $p$-multijets at $z$}. If $V=\R$, we drop it from the notation and write $\cMJ_p(\Omega) \to C_p[\Omega]$. Let $f:\Omega\to V$ be of class $\cC^{p-1}$, we call the section $\mj_p(f,\cdot)$ of $\cMJ_p(\Omega,V)$ the \emph{$p$-multijet of~$f$} and its value at $z \in C_p[\Omega]$ the \emph{$p$-multijet of $f$ at $z$}.
\end{dfn}

The manifold $C_p[\R^n]$ is what is called in the literature a ``compactification'' of the configuration space $\config$. We will use this terminology, even though it is ill-chosen in our case since $C_p[\R^n]$ is not compact. However $C_p[\R^n]$ contains a diffeomorphic copy of $\config$ as a dense open subset and it is equipped with a proper surjection onto $(\R^n)^p$ so that, in a sense, it is locally a compactification of $\config$.

Compactifications of configuration spaces are built to understand how a configuration (ordered or not) of $p$ distinct points can degenerate as these points converge toward one another. They are usually obtained by blowing up various pieces of the diagonal. Points in the exceptional locus then correspond to singular configurations, with some extra data encoding along which paths regular configurations are allowed to degenerate in order to reach this singular configuration. The hope is that the extra data attached to singular configurations is enough to lift the singularities of the problem under consideration. The simplest example of this kind is the blow-up $\bl{(\R^n)^2}{\Delta_2}$ discussed above. More evolved examples are the space defined by Le Barz in~\cite{LBar1988}, the compactification of Fulton--MacPherson~\cite{FM1994} (see also~\cite{AS1994,Sin2004}), Olver's multispace~\cite{Olv2001}, the polydiagonal compactification of Ulyanov~\cite{Uly2002}, the construction of Evain~\cite{Eva2005} using Hilbert schemes, and many others.

In dimension~$n=1$, most of the compactifications of configuration spaces that we found in the literature coincide and can be used to define multijets; see for example~\cite{Anc2021} where Olver's multispace is used. In higher dimensions they are different and none of them exactly suited our needs. Thus to the best of our knowledge, the manifold $C_p[\R^n]$ in Theorem~\ref{thm: multijet bundle} is a new addition to the previous list. We define it by resolving the singularities of some real algebraic variety, using Hironaka's theorem~\cite{Hir1964,Hir1964a}. In particular, $C_p[\R^n]$ is obtained by a sequence of blow-ups along $\Delta_p$. Note that this sequence of blow-ups is neither explicit nor unique. Actually, $C_p[\R^n]$ itself is not uniquely defined, but this is not an issue for the applications we have in mind.


\subsection{Finiteness of moments for zeros of Gaussian fields}
\label{subsec: Finiteness of moments for zeros of Gaussian fields}

Let us now describe our contributions concerning zeros of Gaussian fields. Let $n \geq 1$ and let $r \in \ssquarebrackets{1}{n}$. In the following $n$ will always denote the dimension of the ambient space and $r$ the codimension of the random objects we are interested in.

Let $\Omega \subset \R^n$ be open and let $f:\Omega \to \R^r$ be a centered Gaussian field of class $\cC^1$. We will always assume that $f$ is non-degenerate, in the sense that $\det \var{f(x)}>0$ for all $x \in \Omega$. Under this hypothesis the zero set $Z=f^{-1}(0)$ is almost surely
$(n-r)$-rectifiable; see~\cite{AAL2023}. As such, it admits a well-defined $(n-r)$-dimensional volume measure $\dx \Vol_Z$ induced by the Euclidean metric on $\R^n$. We denote by $\nu$ the random Radon measure on $\Omega$ defined by:
\begin{equation}
\label{eq: def nu}
\forall \phi \in \cC^0_c(\Omega), \qquad \prsc{\nu}{\phi} = \int_Z \phi(x) \dx \Vol_Z(x),
\end{equation}
where $\cC^0_c(\Omega)$ denotes the space of continuous functions on $\Omega$ with compact support.

Actually, $\prsc{\nu}{\phi}$ makes sense as an almost surely defined random variable as soon as $\phi \in L^\infty(\Omega)$ and has compact support. This kind of test-function includes $\cC^0_c(\Omega)$ and indicator functions of bounded Borel subsets, which are the examples we are most interested in. Random variables of the type $\prsc{\nu}{\phi}$ are called the linear statistics of $\nu$ (or of $f$). Understanding the distribution of these linear statistics is one way to understand the distribution of the random measure $\nu$, or equivalently of the random set $Z$. For example, if $B \subset \Omega$ is a bounded Borel set and $\one_B$ denotes its indicator function, then $\prsc{\nu}{\one_B}$ is the $(n-r)$-dimensional volume of $Z \cap B$.

In this setting, a classical question is to determine conditions on the field $f$ ensuring that its linear statistics admit finite moments. If $n=r=1$, such conditions were first obtained in~\cite{Bel1966}. More generally see~\cite[Thm.~3.6]{AW2009}, which holds even if $f$ is not Gaussian. If $n\geq r=1$, a similar result is proved in~\cite{AAD+2023}; see also~\cite[Thm.~4.4]{AAGL2019}. For a survey of previous results for hypersurfaces (i.e.,~$r=1$), we refer to~\cite[Chap.~3, Sect.~2.7]{AW2009} in dimension $n=1$ and to the introduction of~\cite{AAD+2023} in dimension $n\geq 1$. Note that~\cite[Thm.~1.2]{Pry2020a} implies the finiteness of all moments of the nodal length for some Gaussian fields in $\R^2$. This problem was also studied in~\cite{MV1993} for points in~$\R^2$.

Our second main result gives simple conditions on the field $f$ ensuring the finiteness of the $p$-th moments of its linear statistics in any dimension and codimension. These conditions are of two kinds: we require the field to be regular enough, and to be non-degenerate in the following sense.

\begin{dfn}[$p$-non-degeneracy]
\label{def: p non degeneracy}
Let $p \geq 1$ and let $f:\Omega \to \R^r$ be a $\cC^p$ centered Gaussian field. We say that the field $f$ is \emph{$p$-non-degenerate} if for all $x \in \Omega$ the centered Gaussian vector
\begin{equation*}
\parentheses*{\strut f(x),D_xf,\dots,D_x^pf} \in \bigoplus_{k=0}^p \sym^k(\R^n) \otimes \R^r
\end{equation*}
is non-degenerate, where $\sym^k(\R^n)$ denotes the space of symmetric $k$-linear forms on $\R^n$ and $D^k_xf \in \sym^k(\R^n) \otimes \R^r$ stands the $k$-th differential of $f$ at $x$.
\end{dfn}

\begin{rem}
\label{rem: p non degeneracy}
If $f=(f_1,\dots,f_r)$, the $p$-non-degeneracy condition means more concretely that for all $x \in \Omega$ the Gaussian vector $\parentheses*{\strut \partial^{\alpha}f_i(x)}_{1 \leq i \leq r; \norm{\alpha} \leq p}$ is non-degenerate, where we used multi-index notation (see Section~\ref{subsec: Spaces of functions, sections and jets}). More abstractly, this condition means that the $p$-jet $\jet_p(f,x)$ of $f$ is non-degenerate for all $x \in \Omega$.
\end{rem}

\begin{thm}[Finiteness of moments]
\label{thm: moments Rn}
Let $n \geq 1$, let $r \in \ssquarebrackets{1}{n}$ and let $p \geq 1$. Let $\Omega \subset \R^n$ be open, let $f:\Omega \to \R^r$ be a centered Gaussian field and let $\nu$ be defined as in Equation~\eqref{eq: def nu}. If $f$ is~$\cC^p$ and $(p-1)$-non-degenerate then $\esp{\norm{\prsc{\nu}{\phi}}^p}<+\infty$ for all $\phi \in L^\infty(\Omega)$ with compact support.
\end{thm}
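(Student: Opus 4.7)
The natural starting point is a Kac--Rice-type formula for the $p$-th moment. Since $\norm{\prsc{\nu}{\phi}} \leq \Norm{\phi}_\infty \Vol(Z \cap K)$ for $K$ the compact support of $\phi$, it suffices to prove that $\esp{\Vol(Z \cap K)^p} < +\infty$ for every compact $K \subset \Omega$. Expanding $\Vol(Z \cap K)^p$ as $\int_{K^p} \dx \Vol_Z^{\otimes p}(\ux)$ and taking expectation leads to a $p$-point integral over $K^p$ whose integrand is the product of the density at $0$ of the Gaussian vector $(f(x_1),\dots,f(x_p))$ with a conditional expectation of a product of Jacobian-type factors of $f$ at the $x_i$. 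The central difficulty is that this integrand blows up along the diagonal $\Delta_p$, reflecting the degeneracy of $(f(x_1),\dots,f(x_p))$ as the points coalesce.

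To tame this singularity, I would pull the integral back along the proper surjection $\pi: C_p[\Omega] \to \Omega^p$ provided by Theorem~\ref{thm: multijet bundle}. On $\pi^{-1}(\config)$, which is dense in $C_p[\Omega]$, property~\ref{item: thm multijet ev} identifies the condition $\mj_p(f,z) = 0$ with $f(x_1) = \dots = f(x_p) = 0$, so the zero-set data transfers verbatim to the multijet. The key technical step is then to rewrite the Kac--Rice integrand on $C_p[\Omega]$ as the product of the Gaussian density at $0$ of $\mj_p(f,z) \in \cMJ_p(\R^n,\R^r)_z$ with bounded Jacobian and conditional moment factors. I expect the vanishing of the Jacobian of $\pi$ along the exceptional divisor to exactly absorb the degeneration of the Gaussian density of $(f(x_1),\dots,f(x_p))$ near $\Delta_p$, using the blow-up structure of $C_p[\R^n]$ in local Hironaka coordinates; matching these two degenerations is the most delicate piece of the argument.

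Once the integrand is recast on $C_p[\Omega]$, the conclusion follows from compactness and non-degeneracy. Since $\pi$ is proper, $C_p[K] = \pi^{-1}(K^p)$ is compact. By property~\ref{item: thm multijet localness}, at any $z \in C_p[K]$ with distinct base points $y_1,\dots,y_m$ and multiplicities $k_1+1,\dots,k_m+1$, the multijet $\mj_p(f,z)$ is a linear surjective image of $(\jet_{k_1}(f,y_1),\dots,\jet_{k_m}(f,y_m))$, with all $k_j \leq p-1$. The pointwise $(p-1)$-non-degeneracy of $f$ then produces, together with the surjectivity of $\Theta_z$ and a joint non-degeneracy check for the family of jets at distinct points, a non-degenerate centered Gaussian vector $\mj_p(f,z)$, whose density at the origin is finite and depends continuously on $z$ via the smooth covariance of the multijet section. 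Hence this density is uniformly bounded on $C_p[K]$; combined with standard Gaussian bounds on the conditional moments of Jacobian factors, the integrand is bounded on the compact set $C_p[K]$, and its integral is finite.

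Beyond the change-of-variable cancellation between $\pi$ and the Gaussian density, which is the heart of the argument, the most subtle secondary obstacle is precisely the joint non-degeneracy of the family of jets $(\jet_{k_j}(f,y_j))_{1\leq j\leq m}$ at arbitrary pairwise distinct configurations of base points; I would address this by exploiting the locality encoded in property~\ref{item: thm multijet localness} together with the fact that $(p-1)$-non-degeneracy makes the Gaussian vector $\jet_{p-1}(f,y)$ have full-rank covariance at each $y$.
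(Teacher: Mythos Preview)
Your overall architecture---pull back to $C_p[\Omega]$, use properness of $\pi$ and compactness, and exploit the localness property of Theorem~\ref{thm: multijet bundle} to get non-degeneracy of $\mj_p(f,z)$---is the paper's strategy in spirit. But two points deserve correction, one a simplification and one a genuine gap.

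\textbf{The cancellation you call ``the heart of the argument'' is not needed.} The paper never tries to match the vanishing of the Jacobian of $\pi$ along the exceptional divisor against the blow-up of the Gaussian density of $(f(x_1),\dots,f(x_p))$. Instead it observes that the $k$-th factorial moment measure $\esp{\nu^{[k]}}$ is the expectation of the volume measure on the zero set of $\ux\mapsto(f(x_1),\dots,f(x_k))$ over $\Omega^k\setminus\Delta_k$, which coincides with the zero set of $\mj_k(f,\cdot)$ restricted to the open dense subset $\pi^{-1}(\Omega^k\setminus\Delta_k)\subset C_k[\Omega]$. One then simply \emph{bounds} this by the expected volume of the full zero set of $\mj_k(f,\cdot)$ on $C_k[\Omega]$ (the Jacobian of $\pi$ enters only as a bounded factor on compacts), and applies the Kac--Rice formula for the \emph{expectation} (Proposition~\ref{prop: Kac-Rice expectation}) to the non-degenerate $\cC^1$ Gaussian section $\mj_k(f,\cdot)$. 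No delicate cancellation is required; the multijet desingularizes the problem so that a first-moment Kac--Rice suffices. Note also that this is done for each $k\in\ssquarebrackets{1}{p}$, not just $k=p$, because of Lemma~\ref{lem: relation between nu p} and Proposition~\ref{prop: relation between moments measures and densities}.

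\textbf{The joint non-degeneracy step is a real gap as you have written it.} Pointwise $(p-1)$-non-degeneracy of $f$ gives you that each $\jet_{p-1}(f,y)$ has full-rank covariance, but it says nothing about the \emph{joint} law of $(\jet_{k_1}(f,y_1),\dots,\jet_{k_m}(f,y_m))$ for distinct $y_j$; one can construct centered Gaussian fields that are $(p-1)$-non-degenerate at every point yet have $(f(x),f(y))$ degenerate for some $x\neq y$. Thus you cannot conclude that $\mj_p(f,z)$ is non-degenerate for every $z\in C_p[K]$ with $K$ an arbitrary compact. The paper circumvents this by \emph{localizing first}: around any $x_0\in\Omega$, Condition~\ref{item: thm multijet localness} gives non-degeneracy of $\mj_k(f,z)$ for every $z$ in the compact fiber $\pi^{-1}(\{(x_0,\dots,x_0)\})$ (here only the single jet $\jet_{k-1}(f,x_0)$ is involved, and it is non-degenerate by hypothesis). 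Continuity of $z\mapsto\det\var{\mj_k(f,z)}$ and properness of $\pi$ then yield a small neighborhood $\Upsilon\ni x_0$ such that $\mj_k(f,\cdot)$ is non-degenerate on all of $C_k[\Upsilon]$, for every $k\leq p$. A partition of unity then reduces the global statement to these small neighborhoods. This localization is essential and is the step your sketch is missing.
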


\begin{ex}
\label{ex: thm moments Rn}
Let us give some examples of fields satisfying the assumptions of Theorem~\ref{thm: moments Rn}.
\begin{itemize}
\item The Bargmann--Fock field, i.e.,~the smooth stationary Gaussian field on $\R^n$ whose covariance function is $x \mapsto e^{-\frac{\Norm{x}^2}{2}}$, satisfies the hypotheses of Theorem~\ref{thm: moments Rn}.
\item Let $f:\R^n \to \R$ be a stationary $\cC^p$ centered Gaussian field. If the support of its spectral measure has non-empty interior then $f$ is $(p-1)$-non-degenerate.
\item In codimension~$r$, if $(f_i)_{1 \leq i \leq r}$ are $r$ independent $(p-1)$-non-degenerate $\cC^p$ Gaussian fields then so is $f=(f_1,\dots,f_r)$.
\item The Berry field, i.e.,~the smooth stationary Gaussian field $f$ on $\R^n$ whose spectral measure is the uniform measure on $\S^{n-1}$, is $1$-non-degenerate but not $2$-non-degenerate. Indeed it almost surely satisfies $\Delta f+f=0$, so that $(f(x),D_xf,D_x^2f)$ is degenerate for all $x \in \R^n$.
\end{itemize}
\end{ex}

We can consider the same question in a more geometric setting. Let $(M,g)$ be a Riemannian manifold of dimension $n \geq 1$ without boundary and let $E \to M$ be a smooth vector bundle of rank $r \in \ssquarebrackets{1}{n}$ over $M$. Let $s$ be a centered Gaussian field on $M$ with values in $E$, in the sense that $s$ is a random section of $E \to M$ such that for all $m \geq 1$ and all $x_1,\dots,x_m \in M$ the random vector $(s(x_1),\dots,s(x_m))$ is a centered Gaussian. We assume that $s$ is almost surely $\cC^1$ and that $\det \var{s(x)} >0$ for all $x \in M$.

As in the Euclidean setting, $Z=s^{-1}(0)$ is almost surely $(n-r)$-rectifiable. As before, we denote by~$\nu$ the random Radon measure on $M$ defined by integrating over $Z$ with respect to the $(n-r)$-dimensional volume measure $\dx \Vol_Z$ induced by $g$. For all $\phi \in L^\infty(M)$ with compact support, we define the linear statistic $\prsc{\nu}{\phi}$ as in Equation~\eqref{eq: def nu}. In this context Definition~\ref{def: p non degeneracy} adapts as follows.

\begin{dfn}[$p$-non-degeneracy for Gaussian sections]
\label{def: p non degeneracy sections}
Let $p \geq 1$ and let $s$ be a $\cC^p$ centered Gaussian field on $M$ with values in $E$. We say that $s$ is \emph{$p$-non-degenerate} if, for all $x \in M$, the centered Gaussian vector $\jet_p(s,x) \in \cJ_p(M,E)_x$ is non-degenerate.
\end{dfn}

\begin{thm}[Finiteness of moments for zeros of Gaussian sections]
\label{thm: moments M}
Let $p\geq 1$, let $s$ be a centered Gaussian field on $M$ with values in $E$ and let $\nu$ be defined as in Equation~\eqref{eq: def nu}. If $s$ is~$\cC^p$ and $(p-1)$-non-degenerate then $\esp{\norm{\prsc{\nu}{\phi}}^p}<+\infty$ for all $\phi \in L^\infty(M)$ with compact support.
\end{thm}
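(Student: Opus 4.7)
The plan is to reduce Theorem~\ref{thm: moments M} to its Euclidean counterpart, Theorem~\ref{thm: moments Rn}, by a standard localization argument. Let $K \subset M$ be the compact support of $\phi$. Since $K$ is compact and $E \to M$ is locally trivial, we can cover $K$ by finitely many open sets $U_1, \dots, U_N$ such that each $U_i$ is the domain of a chart $\varphi_i : U_i \to \Omega_i \subset \R^n$ and $E_{|U_i}$ admits a smooth trivialization $\Psi_i : E_{|U_i} \to U_i \times \R^r$. Pick a partition of unity $(\chi_i)_{1 \leq i \leq N}$ subordinate to this cover, with each $\chi_i \in \cC^\infty_c(U_i)$, and write $\prsc{\nu}{\phi} = \sum_{i=1}^N \prsc{\nu}{\phi\chi_i}$. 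By convexity of $x \mapsto \norm{x}^p$, it suffices to prove $\esp{\norm*{\prsc{\nu}{\phi\chi_i}}^p} < +\infty$ for each $i$.

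For each $i$, I would define $f_i : \Omega_i \to \R^r$ by $f_i = \mathrm{pr}_2 \circ \Psi_i \circ s \circ \varphi_i^{-1}$, where $\mathrm{pr}_2$ is the second projection of $U_i \times \R^r$. Then $f_i$ is a $\cC^p$ centered Gaussian field on $\Omega_i$. The next step is to verify the hypotheses of Theorem~\ref{thm: moments Rn} for $f_i$. Non-degeneracy of $f_i(x)$ for every $x \in \Omega_i$ follows from $\det \var{s(\varphi_i^{-1}(x))} > 0$ since $\Psi_i$ is a linear isomorphism on each fiber. More generally, the chart $\varphi_i$ and trivialization $\Psi_i$ together induce a linear isomorphism between the jet space $\cJ_{p-1}(M,E)_{\varphi_i^{-1}(x)}$ and the model jet space $\bigoplus_{k=0}^{p-1} \sym^k(\R^n) \otimes \R^r$, sending $\jet_{p-1}(s, \varphi_i^{-1}(x))$ to $\parentheses*{f_i(x), D_x f_i, \dots, D_x^{p-1} f_i}$; hence $(p-1)$-non-degeneracy of $s$ in the sense of Definition~\ref{def: p non degeneracy sections} transfers to $(p-1)$-non-degeneracy of $f_i$ in the sense of Definition~\ref{def: p non degeneracy}.

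It remains to rewrite $\prsc{\nu}{\phi\chi_i}$ as a linear statistic of $f_i$. The diffeomorphism $\varphi_i$ maps $Z \cap U_i$ onto $f_i^{-1}(0) \cap \Omega_i$, and the pushforward of the Riemannian $(n-r)$-volume measure is absolutely continuous with respect to the Euclidean $(n-r)$-volume measure on $f_i^{-1}(0)$, with a continuous Radon--Nikodym derivative $J_i : \Omega_i \to (0, +\infty)$ depending only on $g$ and $\varphi_i$. Denoting by $\nu_i$ the Euclidean volume measure on $f_i^{-1}(0)$ and setting $\psi_i = (\phi \chi_i) \circ \varphi_i^{-1} \cdot J_i$, we get $\prsc{\nu}{\phi\chi_i} = \prsc{\nu_i}{\psi_i}$ almost surely. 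The function $\psi_i$ lies in $L^\infty(\Omega_i)$ and has compact support in $\Omega_i$, because $\varphi_i(\mathrm{supp}(\chi_i))$ is a compact subset of $\Omega_i$ on which $J_i$ is bounded and $\phi$ is essentially bounded. Applying Theorem~\ref{thm: moments Rn} to $f_i$ and $\psi_i$ concludes the proof.

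The argument is essentially bookkeeping: the only nontrivial point is the transfer of $(p-1)$-non-degeneracy through the chart and trivialization, and this is where the intrinsic formulation of Definition~\ref{def: p non degeneracy sections} in terms of the jet bundle $\cJ_p(M,E)$ pays off. Once this invariance is recorded, no further analytic work is required beyond the Euclidean result.
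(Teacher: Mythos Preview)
Your reduction has two issues. First, a concrete gap in the volume comparison: you claim the pushforward of the Riemannian $(n-r)$-volume on $Z\cap U_i$ has a Radon--Nikodym derivative $J_i:\Omega_i\to(0,+\infty)$ with respect to the Euclidean $(n-r)$-volume on $f_i^{-1}(0)$ that depends only on $g$ and $\varphi_i$. When $r<n$ this fails. Writing $h=(\varphi_i^{-1})^*g$ for the pushed-forward metric on $\Omega_i$, the density at a regular point $y$ is $\det\parentheses*{h(y)_{\vert \ker D_yf_i}}^{1/2}$ computed in a Euclidean-orthonormal basis of the tangent space $\ker D_yf_i$ (this is the function $\gamma_r$ of Definition~\ref{def: Riemannian densities}; see Lemma~\ref{lem: comparing volumes}). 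That tangent space is \emph{random}, so $J_i$ cannot be absorbed into a deterministic test function $\psi_i$. The fix is to bound rather than equate: $\gamma_r$ is continuous on $\Omega_i\times\gr[\R^n]{r}$, hence bounded on the compact set $\varphi_i(\mathrm{supp}\,\chi_i)\times\gr[\R^n]{r}$, giving $\norm{\prsc{\nu}{\phi\chi_i}}\leq C_i\,\prsc{\nu_i}{\norm{\phi\chi_i}\circ\varphi_i^{-1}}$, after which the Euclidean bound applies. This is exactly how the paper handles Step~3 of its proof.

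Second, a circularity with the paper's logic: in the paper, Theorem~\ref{thm: moments Rn} is deduced \emph{from} Theorem~\ref{thm: moments M} (by viewing $f$ as a section of the trivial bundle $\R^r\times\Omega$), not the other way around, so you cannot invoke it here. The paper's actual input is the local multijet result, Theorem~\ref{thm: moments Rn local}, whose hypothesis is non-degeneracy of $\mj_k(f,\cdot)$ on $C_k[\Omega]$ for each $k\leq p$. The nontrivial content you are delegating away is Step~1 of the paper's proof: showing that $(p-1)$-non-degeneracy of $\jet_{p-1}(s,x_0)$ implies, after shrinking the chart around each $x_0$, non-degeneracy of all the multijet fields $\mj_k(f,\cdot)$ over $C_k[\Omega]$. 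This uses Condition~\ref{item: thm multijet localness} of Theorem~\ref{thm: multijet bundle} together with properness of $\pi$ and a compactness argument on the fibers $\pi^{-1}(\ux)$. Once that step is in place, the remaining localization (Step~2) and volume comparison (Step~3) are essentially what you wrote, with the Jacobian fix above.
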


We are aware of the very recent paper~\cite{GS2024} by Gass and Stecconi, in which they prove a result similar to Theorem~\ref{thm: moments Rn}, as well as its analogue for zeros of Gaussian fields on a Riemannian manifold. Their work and ours are independent, and the proofs are different. Their idea is to compare the Kac--Rice densities (see~Section~\ref{subsec: Factorial moment measures and Kac--Rice densities}) of the field $f$ with those of a well-chosen Gaussian polynomial $P$. Then they deduce the result for $f$ from the result for $P$, which is a consequence of Bézout's theorem. Our proof follows a different path, as it relies on the multijet bundle that we defined in Theorem~\ref{thm: multijet bundle}. Our idea is to observe that the zero set of $F:(x_1,\dots,x_p) \mapsto (f(x_1),\dots,f(x_p))$ in the configuration space $\Omega^p \setminus \Delta_p$ is exactly the vanishing locus of the multijet $\mj_p(f,\cdot)$ restricted to $\Omega^p \setminus \Delta_p \subset C_p[\Omega]$. Instead of working with $F$ which degenerates along $\Delta_p$, we work with the field $\mj_p(f,\cdot)$ that we built to be non-degenerate everywhere. Then, we deduce Theorem~\ref{thm: moments Rn} from the Kac--Rice formula for the expectation (see Proposition~\ref{prop: Kac-Rice expectation}) applied to the $p$-multijet of $f$ and a compactness argument.


\subsection{Higher order multijets and holomorphic multijets}
\label{subsec: Higher order multijets and holomorphic multijets}

Let us now discuss two important variations on our main results Theorems~\ref{thm: multijet bundle} and~\ref{thm: moments M}. In Section~\ref{subsec: Multijet bundles}, we said that two functions $f$ and $g$ on $\R^n$ have the same $p$-multijet at a point $\ux=(x_1,\dots,x_p) \in \config$ if and only if $f$ and $g$ have the same value, i.e.,~the same $0$-jet, at $x_i$ for all $i \in \ssquarebrackets{1}{p}$. In a sense, the $p$-multijet of $f$ at $\ux$ is obtained by patching together the $0$-jets of $f$ at each of the $x_i$ in a relevant way. A natural generalization is to define a higher order multijet of $f$ at $\ux$ by patching together the $k$-jets of $f$ at each of the $x_i$. We define such a higher order multijet in Section~\ref{sec: Multijets adapted to a differential operator}. More generally, we define a multijet bundle adapted to a differential operator~$\cD$. The case of higher order multijets corresponds to $\cD=\jet_k$. The analogue of Theorem~\ref{thm: multijet bundle} in this framework is Theorem~\ref{thm: multijet bundle D} below. We use it to prove an analogue of Theorem~\ref{thm: moments M} adapted to $\cD$; see Theorem~\ref{thm: moments M D} for a general statement. In the special case where $\cD=D$ is the standard differential, the statement is the following.

\begin{thm}[Finiteness of moments for critical points]
\label{thm: moments critical points}
Let $M$ be a smooth manifold without boundary. Let $f:M \to \R$ be a centered Gaussian field and let $\nu_D$ denote the counting measure of its critical locus. Let $p \geq 1$, we assume that $f$ is $\cC^{2p}$ and $(2p-1)$-non-degenerate. Then, for all $\phi \in L^\infty_c(M)$, we have $\esp{\norm{\prsc{\nu_D}{\phi}}^p}<+\infty$.
\end{thm}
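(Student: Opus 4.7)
The plan is to obtain Theorem~\ref{thm: moments critical points} as the specialization to $\cD = D$ of the general result Theorem~\ref{thm: moments M D}, the announced differential-operator analogue of Theorem~\ref{thm: moments M}. Since the critical locus of $f$ is $(Df)^{-1}(0)$, the measure $\nu_D$ is the counting measure of the zero set of the Gaussian section $Df$, and so Theorem~\ref{thm: moments M D} applied to $\cD = D$ is exactly the right framework; under this specialization, the hypotheses $f \in \cC^{2p}$ and $(2p-1)$-non-degeneracy are precisely what that theorem demands.

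Inside Theorem~\ref{thm: moments M D}, the blueprint I would follow mirrors that of Theorem~\ref{thm: moments M}, with the plain multijet bundle replaced by the $\cD$-adapted one constructed in Theorem~\ref{thm: multijet bundle D}. First, expand $\esp{\norm{\prsc{\nu_D}{\phi}}^p}$ as a sum over ordered $p$-tuples of critical points of $f$, the diagonal contributions being handled inductively as lower moments of $\prsc{\nu_D}{\phi}$. Second, identify each ordered $p$-tuple of distinct critical points with a zero of the $\cD$-multijet section $\mj_p^D(f, \cdot)$ of the bundle $\cMJ_p^D(M) \to C_p[M]$, restricted to $\pi^{-1}(M^p \setminus \Delta_p)$. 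By the $\cD$-analogue of Theorem~\ref{thm: multijet bundle}, the hypothesis $f \in \cC^{2p}$ makes $\mj_p^D(f, \cdot)$ a $\cC^1$ section, while the $(2p-1)$-non-degeneracy of $f$ ensures that $\mj_p^D(f, z)$ is a non-degenerate Gaussian vector at every $z \in C_p[M]$, since it factors through a surjection from jets of $f$ of order at most $2p-1$ at the support points of $\pi(z)$. Third, apply the Kac--Rice formula for the first moment (Proposition~\ref{prop: Kac-Rice expectation}) to this non-degenerate $\cC^1$ Gaussian field: the resulting Kac--Rice density is locally bounded on $C_p[M]$, and integrating it over the compact set $\pi^{-1}((\operatorname{supp} \phi)^p)$ (compact by properness of $\pi$) yields the desired finite upper bound on $\esp{\norm{\prsc{\nu_D}{\phi}}^p}$.

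The main obstacle lies not in this deduction but in the underlying Theorem~\ref{thm: multijet bundle D}: one must construct the $\cD = D$ multijet bundle and establish a clean description of $\mj_p^D(f, z)$ at coincident configurations as a surjective image of a suitable jet of $f$. The shift from the hypotheses $(\cC^p, (p-1)\text{-non-degenerate})$ of Theorem~\ref{thm: moments M} to $(\cC^{2p}, (2p-1)\text{-non-degenerate})$ here reflects the fact that, at a cluster of $p$ critical points coalescing to a single point, the $\cD = D$ multijet must encode both the vanishing of $Df$ and enough Hessian-type information to track the critical points through their multiplicity, roughly doubling the order of the jet required. Once Theorem~\ref{thm: multijet bundle D} is in place, Theorem~\ref{thm: moments critical points} follows immediately.
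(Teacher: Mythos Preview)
Your proposal is correct and follows essentially the same route as the paper: Theorem~\ref{thm: moments critical points} is obtained as the case $\cD=D$ (a differential operator of order $d=1$, so $(d+1)p=2p$) of Theorem~\ref{thm: moments M D}, whose proof in turn mirrors that of Theorem~\ref{thm: moments M} with the $\cD$-adapted multijet bundle of Theorem~\ref{thm: multijet bundle D} replacing the ordinary one, and the Kac--Rice formula for the expectation applied to the non-degenerate $\cC^1$ field $\mj_k^\cD(f,\cdot)$. One minor notational point: the base of the $\cD$-adapted bundle is $C_p^\cD[\R^n]$, a compactification that in general differs from $C_p[\R^n]$.
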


Another variation on Theorem~\ref{thm: multijet bundle} is to define holomorphic multijets for holomorphic maps. This is done in Section~\ref{sec: Multijets of holomorphic maps}, and more precisely in Theorem~\ref{thm: holomorphic multijet bundle}. This is used to prove a holomorphic version of Theorem~\ref{thm: moments M}. The general statement is given in Theorem~\ref{thm: moments M holo}. For a holomorphic Gaussian field on an open subset of $\C^n$, it takes the following form.

\begin{thm}[Finiteness of moments for zeros of holomorphic Gaussian fields]
\label{thm: moments holo}
Let $\Omega \subset \C^n$ be open and let $f:\Omega \to \C^r$ be a centered holomorphic Gaussian field, where $r \in \ssquarebrackets{1}{n}$. Let $\nu$ be as in Definition~\ref{def: nu}. Let $p \geq 1$, we assume that, for all $x \in \Omega$, the complex Gaussian vector
\begin{equation*}
\parentheses*{\strut f(x),D_xf,\dots,D_x^{p-1}f} \in \bigoplus_{k=0}^{p-1} \sym^k(\C^n) \otimes \C^r
\end{equation*}
is non-degenerate. Then, for all $\phi \in L^\infty_c(\Omega)$, we have $\esp{\norm{\prsc{\nu}{\phi}}^p}<+\infty$.
\end{thm}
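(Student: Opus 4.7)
The plan is to mirror the strategy used for Theorem~\ref{thm: moments M}, replacing the real multijet bundle by its holomorphic counterpart from Theorem~\ref{thm: holomorphic multijet bundle}. I would first invoke that theorem to obtain a complex manifold $C_p^{\mathrm{hol}}[\Omega]$ with a proper surjection $\pi$ onto $\Omega^p$ that is biholomorphic over $\Omega^p \setminus \Delta_p$, a holomorphic vector bundle $\cMJ_p^{\mathrm{hol}}(\Omega,\C^r) \to C_p^{\mathrm{hol}}[\Omega]$ of complex rank $rp$, and a multijet map $\mj_p^{\mathrm{hol}}$ satisfying the holomorphic analogues of items~\ref{item: thm multijet mj}--\ref{item: thm multijet localness}. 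Identifying $\C^n$ with $\R^{2n}$, the zero set $Z = f^{-1}(0)$ is almost surely a complex submanifold of complex codimension $r$, hence a real rectifiable set of real dimension $2(n-r)$, and $\nu$ is the induced Hausdorff measure.

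The key observation is that, over $\pi^{-1}(\Omega^p \setminus \Delta_p)$, the vanishing locus of the random holomorphic section $s := \mj_p^{\mathrm{hol}}(f,\cdot)$ of $\cMJ_p^{\mathrm{hol}}$ coincides under $\pi$ with $Z^p \setminus \Delta_p$. Since $Z^p \cap \Delta_p$ has real Hausdorff dimension strictly smaller than $2(n-r)p$ when $r<n$, and consists of diagonals of isolated points handled by explicit summation when $r=n$, unfolding the $p$-th power via Fubini and lifting to $C_p^{\mathrm{hol}}[\Omega]$ yields
\begin{equation*}
\esp{\norm{\prsc{\nu}{\phi}}^p} = \esp*{\int_{s^{-1}(0)} \Phi \dx \Vol_{s^{-1}(0)}},
\end{equation*}
where $\Phi(z) = \phi(x_1)\cdots\phi(x_p)$ for $\pi(z) = (x_1,\dots,x_p)$ and the volume on $s^{-1}(0)$ is computed with respect to the pullback of the Euclidean metric. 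Properness of $\pi$ and the compact support of $\phi$ make $\Phi$ compactly supported in $C_p^{\mathrm{hol}}[\Omega]$.

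The decisive step, where the multijet machinery pays off, is to verify that $s$ is a non-degenerate complex Gaussian section, i.e., that $s(z) \in \cMJ_p^{\mathrm{hol}}(\Omega,\C^r)_z$ is a non-degenerate complex Gaussian vector for every $z \in C_p^{\mathrm{hol}}[\Omega]$. By the holomorphic analogue of item~\ref{item: thm multijet localness}, if $\pi(z)$ has configuration $(y_1,\dots,y_1,\dots,y_m,\dots,y_m)$ with the $y_j$ pairwise distinct and appearing $(k_j+1)$ times, then $s(z) = \Theta_z(\jet_{k_1}(f,y_1),\dots,\jet_{k_m}(f,y_m))$ for some linear surjection $\Theta_z$. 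The hypothesis that $(f(y),D_yf,\dots,D_y^{p-1}f)$ is a non-degenerate complex Gaussian at every $y$ handles directly the extreme case $m=1$, $k_1=p-1$; together with the blow-up structure of $C_p^{\mathrm{hol}}[\Omega]$ along the successive diagonal strata, it should propagate to every $z$, because as distinct $y_j$ approach a coincidence the asymptotics of $\Theta_z$ read off the appropriate iterated directional derivatives and the limiting non-degeneracy is controlled by the pointwise $(p-1)$-non-degeneracy. Surjectivity of $\Theta_z$ then transfers non-degeneracy to $s(z)$.

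Finally, I would apply the Kac--Rice formula for the expectation (Proposition~\ref{prop: Kac-Rice expectation}) to the real Gaussian section underlying $s$, of real rank $2rp$ on the $2np$-real-dimensional manifold $C_p^{\mathrm{hol}}[\Omega]$, using that the real and imaginary parts of a non-degenerate complex Gaussian form a non-degenerate real Gaussian. Non-degeneracy of $s$ yields a continuous Kac--Rice density on $C_p^{\mathrm{hol}}[\Omega]$, and integrating it against the compactly supported $\Phi$ gives a finite expectation. I expect the main obstacle to lie in the verification carried out in paragraph three, namely that $s(z)$ is non-degenerate on the exceptional locus of $\pi$: the iterated blow-up construction of $C_p^{\mathrm{hol}}[\Omega]$ must be compatible with the linear-algebra structure of the jets so that every limiting $\Theta_z$ lands in a non-degenerate fiber, and this is precisely the compatibility that Theorem~\ref{thm: holomorphic multijet bundle} is designed to encode.
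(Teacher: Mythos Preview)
Your overall strategy---lift to the holomorphic multijet space, check non-degeneracy of $\mj_p^\C(f,\cdot)$, and apply the Kac--Rice formula there---is exactly the paper's route. However, the argument in your third paragraph has a genuine gap.

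Pointwise $(p-1)$-non-degeneracy does \emph{not} imply that $\mj_p^\C(f,z)$ is non-degenerate for every $z \in C_p^\C[\Omega]$. Over the configuration space $\Omega^p \setminus \Delta_p$ the multijet reads as $(f(x_1),\dots,f(x_p))$ in the natural trivialization, and nothing in the hypothesis prevents this vector from being degenerate at some $\ux$ far from the diagonal. Concretely, take $\Omega=\C$, $p=2$, and $f(z)=\xi e^{iz}+\eta e^{-iz}$ with $\xi,\eta$ independent standard complex Gaussians: then $(f(z),f'(z))$ is non-degenerate for every $z$, yet $f(0)=f(2\pi)$ almost surely, so $\mj_2^\C(f,\cdot)$ is degenerate at $(0,2\pi)$. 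Your ``propagation'' heuristic runs in the wrong direction: continuity of $z\mapsto\det\varC{\mj_p^\C(f,z)}$ lets you pass non-degeneracy \emph{from} a point \emph{to} a neighborhood, not conversely.

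The paper's fix is to localize first. For each $x_0\in\Omega$ and each $k\le p$, Condition~\ref{item: thm holo multijet localness} gives $\mj_k^\C(f,z)=\Theta_z^\C\bigl(\jet_{k-1}^\C(f,x_0)\bigr)$ for every $z$ in the fiber $\pi^{-1}\bigl((x_0,\dots,x_0)\bigr)$; here the single jet $\jet_{k-1}^\C(f,x_0)$ \emph{is} non-degenerate by hypothesis, and surjectivity of $\Theta_z^\C$ transfers this to $\mj_k^\C(f,z)$ on that compact fiber. Continuity and properness of $\pi$ then produce a neighborhood $\Upsilon\ni x_0$ such that $\mj_k^\C(f,\cdot)$ is non-degenerate on all of $C_k^\C[\Upsilon]$ for every $k\le p$. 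A partition of unity subordinate to such neighborhoods, combined with H\"older's inequality, reduces the estimate for arbitrary $\phi\in L^\infty_c(\Omega)$ to the local version (Theorem~\ref{thm: moments holomorphic local}); this is how the paper packages Theorem~\ref{thm: moments holo} as a special case of Theorem~\ref{thm: moments M holo}. Note in particular that one needs non-degeneracy of $\mj_k^\C$ for \emph{all} $k\le p$, not only $k=p$, because of the diagonal combinatorics when $r=n$ (Lemma~\ref{lem: relation between nu p} and Proposition~\ref{prop: relation between moments measures and densities}).

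A secondary technical point: the pullback of the Euclidean metric by $\pi$ degenerates along the exceptional divisor, so it is not Riemannian on $C_p^\C[\Omega]$ and Proposition~\ref{prop: Kac-Rice expectation} does not apply to it. The paper introduces an auxiliary Riemannian metric on $C_p^\C[\Omega]$, bounds the Jacobian of $\pi$ on the compact set $\pi^{-1}(K)$, and obtains an inequality rather than your displayed equality; see Step~3 in the proof of Theorem~\ref{thm: moments Rn local}.
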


Note that Theorems~\ref{thm: moments critical points} and~\ref{thm: moments holo} are not consequences of Theorem~\ref{thm: moments M}. Indeed, if $f:M \to \R$ is a smooth Gaussian field then $Df$ cannot be $1$-non-degenerate because $D^2f$ is symmetric. Similarly, if $M$ is a complex manifold and $s$ is holomorphic, then $s$ is never $1$-non-degenerate because it satisfies the Cauchy--Riemann Equations.

Gass and Stecconi~\cite{GS2024} proved, independently and by a different method, results analogous to Theorems~\ref{thm: moments critical points} and~\ref{thm: moments holo}. Actually, they prove Theorem~\ref{thm: moments critical points} under the weaker and optimal hypotheses that $f$ is $\cC^{p+1}$ and $p$-non-degenerate. The finiteness of the third moment for the number of critical points of a stationary Gaussian field on $\R^d$ was proved in~\cite[Thm.~1.6]{BMM2024}. For holomorphic Gaussian fields in dimension~$n=1$, see~\cite{NS2012}.


\subsection{Organization of the paper}
\label{subsec: Organisation of the paper}

In Section~\ref{sec: Notation partitions and function spaces} we gather useful notation that appears in several parts of the paper. In Section~\ref{sec: Divided differences and Kergin interpolation} we discuss Kergin interpolation, which is a multivariate polynomial interpolation appearing in the definition of multijets. Section~\ref{sec: Evaluation maps and their kernels} is dedicated to evaluations maps on spaces of polynomials, and more precisely the properties of their kernels. We define our multijet bundles and prove Theorem~\ref{thm: multijet bundle} in Section~\ref{sec: Definition of the multijet bundles}. Section~\ref{sec: Application to zeros of Gaussian fields} is concerned with the application of multijets to the finiteness of moments for the zeros of Gaussian fields and the proofs of Theorems~\ref{thm: moments Rn} and~\ref{thm: moments M}. Multijets adapted to a differential operator are discussed in Section~\ref{sec: Multijets adapted to a differential operator}, where we also prove the analogue of Theorem~\ref{thm: moments M} for critical points. Finally, holomorphic multijets are defined in Section~\ref{sec: Multijets of holomorphic maps}, where we prove the analogue of Theorem~\ref{thm: moments M} for holomorphic Gaussian fields.

\paragraph{Acknowledgments.} The authors thank Vincent Borrelli, Nicolas Vichery and Gabriel Rivière for independently suggesting to take a look at various compactifications of configuration spaces.


\section{Notation: partitions and function spaces}
\label{sec: Notation partitions and function spaces}

The goal of this section is to quickly introduce definitions and notation that will appear in different parts of the paper. We gather them here for the reader's convenience.


\subsection{Sets, partitions and diagonals}
\label{subsec: Sets, partitions and diagonals}

In this paper, we denote by $\N$ the set of non-negative integers. Let $a$ and $b \in \N$, we use the following notation for integer intervals $\ssquarebrackets{a}{b}=\squarebrackets{a,b}\cap \N$.

Let $A$ be a non-empty finite set. For simplicity, in all the notation introduced in this section, if $A = \ssquarebrackets{1}{p}$ we allow ourselves to replace $A$ by $p$ in the indices and exponents. We denote by $\norm{A}$ the cardinality of $A$. Let $M$ be any set, we denote by $M^A$ the Cartesian product of $\norm{A}$ copies of $M$ indexed by the elements of~$A$. A generic element of $M^A$ is usually denoted by $\ux = \parentheses{x_a}_{a \in A}$. If $\emptyset \neq B \subset A$, we denote by $\ux_B = \parentheses{x_a}_{a \in B}$.

\begin{dfn}[Large diagonal]
\label{def: large diagonal}
We denote by $\Delta_A$ the large diagonal in $M^A$, that is:
\begin{equation*}
\Delta_A = \brackets*{\parentheses{x_a}_{a \in A}\in M^A \mvert \exists a,b \in A \ \text{such that} \ a\neq b \ \text{and} \ x_a=x_b}.
\end{equation*} 
\end{dfn}

\begin{dfn}[Partitions]
\label{def: partitions}
Let $A$ be a non-empty and finite set, a \emph{partition} of $A$ is a family $\cI=\brackets{I_1,\dots,I_m}$ of non-empty disjoint subsets of $A$ such that $\bigsqcup_{i=1}^m I_i = A$. The subsets $I_1,\dots,I_m$ are called the \emph{cells} of $\cI$. Given $a \in A$, we denote by $\squarebrackets{a}_\cI$ the only cell of $\cI$ that contains $a$. Finally, we denote by $\cP_A$ the set of partitions of $A$.
\end{dfn}

\begin{dfn}[Clustering partition]
\label{def: clustering partition}
Let $\ux = \parentheses{x_a}_{a \in A} \in M^A$, we denote by $\cI(\ux) \in \cP_A$ the only partition such that, for all $a$ and $b \in A$ we have $x_a=x_b$ if and only if $\squarebrackets{a}_{\cI(\ux)}=\squarebrackets{b}_{\cI(\ux)}$.
\end{dfn}

\begin{ex}
\label{ex: clustering partition}
If $\ux=(x,\dots,x)$ then $\cI(\ux) = \brackets{A}$. If $\ux \in M^A \setminus \Delta_A$ then $\cI(\ux) = \brackets*{\brackets{a}\mvert a \in A}=\cI_0$.
\end{ex}

\begin{dfn}[Strata of the diagonal]
\label{def: strata of the diagonal}
For all $\cI \in \cP_A$, we set $\Delta_{A,\cI} = \brackets*{\ux \in M^A \mvert \cI(\ux)=\cI}$, so that $\Delta_{A,\cI_0}= M^A \setminus \Delta_A$ and $\Delta_A = \bigsqcup_{\cI \neq \cI_0} \Delta_{A,\cI}$.
\end{dfn}

\begin{dfn}[Diagonal inclusions]
\label{def: diagonal inclusions}
Let $\cI \in \cP_A$, we denote by $\iota_\cI: M^\cI \setminus \Delta_\cI \to \Delta_{A,\cI}$ the bijection defined by $\iota_\cI\parentheses*{\strut (y_I)_{I \in \cI}} = \parentheses*{y_{\squarebrackets{a}_\cI}}_{a \in A}$.
\end{dfn}


\subsection{Spaces of functions, sections and jets}
\label{subsec: Spaces of functions, sections and jets}

We use the following multi-index notation. Let $\alpha = (\alpha_1,\dots,\alpha_n) \in \N^n$, we denote its length by $\norm{\alpha} = \alpha_1+\dots+\alpha_n$. Let $\partial_i$ denote the $i$-th partial derivative in some product space, we denote by $\partial^\alpha = \partial_1^{\alpha_1} \cdots \partial_n^{\alpha_n}$. Finally, if $X=(X_1,\dots,X_n)$, we denote by $X^\alpha = X_1^{\alpha_1} \cdots X_n^{\alpha_n}$.

\begin{dfn}[Polynomials]
\label{def: polynomials}
We denote by $\R_d[X]$ the space of real polynomials in $n$ variables of degree at most $d$, where  $d \in \N$ and $X = (X_1,\dots,X_n)$ is multivariate.
\end{dfn}

\begin{dfn}[Symmetric forms and differentials]
\label{def: symmetric forms and differentials}
Let $k \in \N$, we denote by $\sym^k(\R^n)$ the space of symmetric $k$-linear forms on $\R^n$. Let $V$ be a finite-dimensional real vector space, then $\sym^k(\R^n) \otimes V$ is the space of symmetric $k$-linear maps from $\R^n$ to $V$. Given a $\cC^k$ map $f:\R^n \to V$, we denote by $D_x^kf \in \sym^k(\R^n) \otimes V$ its $k$-th differential at $x \in \R^n$.
\end{dfn}

Let $M$ and $N$ be two manifolds without boundary, for all $k \in \N \cup \brackets{\infty}$, we denote by $\cC^k(M,N)$ the space of $\cC^k$ maps from $M$ to $N$. If $V=\R$, we drop it from the notation and we simply write~$\cC^k(M)$. We denote by $L^1_\text{loc}(M)$ the space of locally integrable functions on $M$. We denote by $\cC^0_c(M)$ (resp.~$L^\infty_c(M)$) the space of continuous (resp.~$L^\infty$) functions on $M$ with compact support. Finally, for any Borel subset $B \subset M$, we denote by $\one_B:M \to \R$ its indicator function.

Let $E \to M$ be a vector bundle of finite rank over $M$, we denote by $E_x$ the fiber above $x \in M$. For all $k \in \N \cup \brackets{\infty}$, we denote by $\Gamma^k(M,E)$ the space of $\cC^k$ sections of $E \to M$.

\begin{dfn}[Jets]
\label{def: jets}
Let $k \in \N$, we denote by $\cJ_k(M,E) \to M$ the vector bundle of $k$-jets of sections of $E \to M$. If $E = V \times M$ is trivial with fiber $V$, we denote its $k$-jet bundle by $\cJ_k(M,V) \to M$. If $V=\R$, we simply write $\cJ_k(M) \to M$. Given $s \in \Gamma^k(M,E)$, we denote by $\jet_k(s,x) \in \cJ_k(M,E)_x$ its $k$-jet at $x \in M$.
\end{dfn}


\section{Divided differences and Kergin interpolation}
\label{sec: Divided differences and Kergin interpolation}

An important step in our construction of a multijet for $\cC^k$ functions is to reduce the problem to that of defining a multijet for polynomials. This is done by polynomial interpolation. In several variables, polynomial interpolation is rather ill-behaved, at least compared with the one variable case. However, a multivariate polynomial interpolation suiting our needs was defined by Kergin in~\cite{Ker1980}. A constructive version of his proof was then given in~\cite{MM1980}, using a multivariate version of the so-called divided differences. In this section, we give the definitions of these objects and recall their relevant properties. We refer to the survey~\cite{Lor2000} for more background on polynomial interpolation in $\R^n$.


\subsection{Divided differences}
\label{subsec: Divided differences}

In this section, we recall the definition of multivariate divided differences; see~\cite{MM1980}. Let $k \in \N$, we denote by $\sigma_k$ the standard simplex of dimension $k$, that is:
\begin{equation}
\label{eq: def k simplex}
\sigma_k = \brackets*{\ut = \parentheses*{t_0,\dots,t_k} \in [0,1]^{k+1} \mvert \sum_{i=0}^k t_i=1} \subset \R^{k+1}.
\end{equation}
The simplex $\sigma_k$ is a subset of $\brackets{\underline{t} \in \R^{k+1} \mid \sum t_i = 1}$, and we denote by $\nu_k$ the ($k$-dimensional) Lebesgue measure on this hyperplane, normalized so that $\nu_k(\sigma_k) = \frac{1}{k!}$. One can check that its restriction to $\sigma_k$ satisfies:
\begin{equation}
\label{eq: def nuk}
\int_{\sigma_k} \phi(\ut) \dx \nu_k(\ut) = \int_{\substack{t_1,\dots,t_k \geq 0 \\ \sum_{i=1}^k t_i \leq 1}} \phi\parentheses*{1-\sum_{i=1}^k t_i,t_1,\dots,t_k} \dx t_1 \dots \dx t_k,
\end{equation}
where $\dx t_1 \dots \dx t_k$ is the Lebesgue measure on $\R^k$. For any $\ux=\parentheses*{x_0,\dots,x_k} \in \parentheses*{\R^n}^{k+1}$, we denote by $\sigma(\ux)$ the convex hull of the $x_i$ and we define $\upsilon_{\ux}:\ut \mapsto \sum_{i=0}^k t_ix_i$ from $\sigma_k$ onto $\sigma(\ux)$. Recalling Definition~\ref{def: symmetric forms and differentials}, we have the following.

\begin{dfn}[Divided differences]
\label{def: divided differences}
Let $\ux = \parentheses*{x_i}_{0 \leq i \leq k}\in (\R^n)^{k+1}$ and let $f$ be a $\cC^k$ function defined on some open neighborhood of $\sigma(\ux)$ in $\R^n$. We define the \emph{divided difference} of $f$ at $\ux$ by:
\begin{equation*}
f[x_0,\dots,x_k] = \int_{\sigma_k} D_{\upsilon_{\ux}(\ut)}^kf \dx \nu_k(\ut) \in \sym^k(\R^n),
\end{equation*}
that is, as the average of $D^kf$ over $\sigma(\ux)$ with respect to the pushed-forward measure $\parentheses{\upsilon_{\ux}}_*(\nu_k)$.
\end{dfn}

\begin{rem}
\label{rem: divided differences}
\begin{itemize}
\item If $\ux = \parentheses{x,\dots,x}$ for some $x \in \R^n$ then $f[x,\dots,x] = \frac{1}{k!}D^k_xf$.
\item Definition~\ref{def: divided differences} is invariant under permutation of $\parentheses{x_0,\dots,x_k}$.
\item When $n=1$, Definition~\ref{def: divided differences} coincides with the classical definition of divided differences, under the canonical isomorphism $\sym^k(\R) \simeq \R$. This is known as the Hermite--Genocchi formula~\cite{MM1980}.
\end{itemize}
\end{rem}

\begin{lem}[Regularity of divided differences]
\label{lem: regularity of divided differences}
For all $\ux \in (\R^n)^{k+1}$, the map $f \mapsto f[x_0,\dots,x_k]$ is linear. Moreover, if $f$ is of class $\cC^{k+l}$ then $\ux \mapsto f[x_0,\dots,x_k]$ is of class $\cC^l$.
\end{lem}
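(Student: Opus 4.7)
The plan is to reduce everything to differentiation under the integral sign, once we notice that the only dependence on $\ux$ inside the integral is through the affine map $\upsilon_{\ux}$.

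For linearity, I would simply observe that $f \mapsto D^kf$ is linear from $\cC^k$ functions to $\cC^0$ maps with values in $\sym^k(\R^n)$, and that integration against $\nu_k$ on $\sigma_k$ is linear; hence the composition $f \mapsto \int_{\sigma_k} D^k_{\upsilon_{\ux}(\ut)}f \dx\nu_k(\ut)$ is linear. Nothing more is needed.

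For the regularity statement, I would fix a point $\ux^0 \in (\R^n)^{k+1}$ and an open bounded neighborhood $U$ of $\ux^0$ in $(\R^n)^{k+1}$. The map $(\ux,\ut) \mapsto \upsilon_{\ux}(\ut) = \sum_{i=0}^k t_ix_i$ is smooth and sends the compact set $\bar U \times \sigma_k$ into a compact subset $K \subset \R^n$. Because $f$ is of class $\cC^{k+l}$, the map $y \mapsto D^k_yf$ is of class $\cC^l$ with values in $\sym^k(\R^n)$, and all its partial derivatives up to order $l$ are continuous, hence bounded on $K$. Composing with $\upsilon_{\ux}(\ut)$, which is $\cC^\infty$ jointly in $(\ux,\ut)$, I obtain that the integrand $(\ux,\ut) \mapsto D^k_{\upsilon_{\ux}(\ut)}f$ is a $\cC^l$ function of $\ux$ whose partial derivatives up to order $l$ in $\ux$ are jointly continuous in $(\ux,\ut)$ on $U \times \sigma_k$, and therefore uniformly bounded on $\bar U \times \sigma_k$.

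Since $\sigma_k$ is compact and carries the finite measure $\nu_k$, the standard theorem on differentiation under the integral sign applies: for every multi-index $\alpha$ with $|\alpha| \leq l$, the function $\ux \mapsto f[x_0,\dots,x_k]$ is $|\alpha|$-times partially differentiable on $U$ and
\begin{equation*}
\partial^\alpha_{\ux}\parentheses*{f[x_0,\dots,x_k]} = \int_{\sigma_k} \partial^\alpha_{\ux}\parentheses*{D^k_{\upsilon_{\ux}(\ut)}f} \dx\nu_k(\ut),
\end{equation*}
with the right-hand side continuous in $\ux$ by uniform continuity of the integrand on $\bar U \times \sigma_k$ and bounded convergence. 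As $\ux^0$ was arbitrary, $\ux \mapsto f[x_0,\dots,x_k]$ is of class $\cC^l$ on $(\R^n)^{k+1}$.

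I do not expect any real obstacle here. The only point to be slightly careful about is making sure the derivatives of the integrand are expressed via the chain rule in terms of $D^{k+|\alpha|}f$ evaluated at $\upsilon_{\ux}(\ut)$ and of derivatives of $\upsilon_{\ux}$ in $\ux$ (which are constant in $\ux$ and polynomial of degree $\leq 1$ in $\ut$), so that the hypothesis $f \in \cC^{k+l}$ is indeed what is needed, and no stronger regularity on $f$ is required.
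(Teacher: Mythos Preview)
Your proposal is correct and follows essentially the same approach as the paper: the paper's proof simply states that linearity is clear and that the regularity with respect to $\ux$ is obtained by derivation under the integral using Definition~\ref{def: divided differences} and Equation~\eqref{eq: def nuk}. Your write-up just spells out the compactness and boundedness justifications for differentiation under the integral sign that the paper leaves implicit.
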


\begin{proof}
The linearity with respect to $f$ is clear. The regularity with respect to $\ux$ is obtained by derivation under the integral, using Definition~\ref{def: divided differences} and Equation~\eqref{eq: def nuk}.
\end{proof}


\subsection{Kergin interpolation}
\label{subsec: Kergin interpolation}

This section is dedicated to Kergin interpolation. In the following, we recall the construction of Kergin interpolation in Micchelli--Milman~\cite{MM1980}, which relies on the divided differences introduced in Definition~\ref{def: divided differences}. We will use the notation introduced in Definition~\ref{def: polynomials}.

\begin{prop}[Kergin interpolation]
\label{prop: Kergin interpolation}
Let $\ux \in (\R^n)^p$ and let $f$ be a function of class~$\cC^{p-1}$ defined on some neighborhood of $\sigma(\ux)$ in $\R^n$. There exists a unique polynomial $K(f,\ux) \in \R_{p-1}[X]$ such that, for all non-empty $I \subset \ssquarebrackets{1}{p}$, we have $f\squarebrackets*{\ux_I} = \parentheses*{K(f,\ux)}\squarebrackets*{\ux_I}$. Moreover,
\begin{equation}
\label{eq: definition K}
K(f,\ux) = \sum_{k=1}^{p} f[x_1,\dots,x_k]\parentheses*{X-x_1,\dots,X-x_{k-1}}.
\end{equation}
\end{prop}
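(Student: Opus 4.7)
The plan is to follow Micchelli--Milman~\cite{MM1980}: establish existence through the explicit formula~\eqref{eq: definition K}, then deduce uniqueness via a dimension count.

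For uniqueness, I would consider the linear map
\begin{equation*}
\Phi_\ux : \R_{p-1}[X] \longrightarrow \bigoplus_{k=1}^p \sym^{k-1}(\R^n), \qquad P \longmapsto \parentheses*{P[x_1,\dots,x_k]}_{1 \leq k \leq p}.
\end{equation*}
Both sides have dimension $\binom{n+p-1}{p-1}$, by the hockey-stick identity $\binom{n+p-1}{p-1} = \sum_{k=1}^{p}\binom{n+k-2}{k-1}$. Once the initial-segment interpolation is established below, formula~\eqref{eq: definition K} provides an explicit preimage of any prescribed tuple $(c_k)_{k=1,\dots,p}$, so $\Phi_\ux$ is surjective and hence bijective. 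Since the full interpolation conditions in the proposition include the initial-segment ones encoded by $\Phi_\ux$, the polynomial $K(f,\ux)$ is unique.

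For existence, set $B_j(c,X) := c(X-x_1,\dots,X-x_{j-1})$ for $c \in \sym^{j-1}(\R^n)$, so that~\eqref{eq: definition K} reads $K(f,\ux) = \sum_{j=1}^p B_j(f[x_1,\dots,x_j],\cdot)$. Three key properties of the Newton basis polynomials, verified directly from Definition~\ref{def: divided differences}, drive the argument: (i) $B_j(c,\cdot)[\ux_I] = c$ whenever $|I|=j$, since $D^{j-1} B_j(c,\cdot) = (j-1)!\,c$ is constant; (ii) $B_j(c,\cdot)[\ux_I] = 0$ whenever $|I|>j$, by degree reasons; and (iii) $B_j(c,\cdot)[x_1,\dots,x_k] = 0$ whenever $k<j$, by direct integration on the simplex using the symmetry of $c$. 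Writing $K(f,\ux) = K(f,(x_1,\dots,x_{p-1})) + B_p(f[x_1,\dots,x_p],\cdot)$, property (iii) combined with the inductive hypothesis on $p$ yields $K(f,\ux)[x_1,\dots,x_k] = f[x_1,\dots,x_k]$ for $k<p$; for $k=p$, properties (i) and (ii) give the desired equality.

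The main obstacle is extending the interpolation from initial segments $\ssquarebrackets{1}{k}$ to arbitrary non-empty $I \subset \ssquarebrackets{1}{p}$. The Newton formula applied to a reordering $(x_{\sigma(1)},\dots,x_{\sigma(p)})$ produces a polynomial $K_\sigma(f,\ux)$ satisfying the initial-segment conditions for $\sigma$. The permutation invariance $K_\sigma(f,\ux) = K(f,\ux)$ for every $\sigma$---which is the crux of Kergin's original theorem~\cite{Ker1980}---combined with the symmetry of divided differences (Remark~\ref{rem: divided differences}), then yields $K(f,\ux)[\ux_I] = f[\ux_I]$ for every non-empty $I$, since each such $I$ is of the form $\sigma(\ssquarebrackets{1}{|I|})$ for some $\sigma$.
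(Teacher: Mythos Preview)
The paper's own proof is a one-line citation to \cite[Thm.~12.5]{BHS1993} (case $m=0$) and \cite{MM1980}, so your sketch is already considerably more explicit than what the paper provides, and it correctly reproduces the Micchelli--Milman strategy.

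Two remarks on the logic. First, your property~(iii) is true but less immediate than ``direct integration'' suggests: unlike (i) and (ii), it genuinely uses that the nodes appearing in $B_j(c,X)=c(X-x_1,\dots,X-x_{j-1})$ are exactly the \emph{initial} ones $x_1,\dots,x_{j-1}$, so that the resulting integrals over $\sigma_{k-1}$ have the right symmetry to vanish. Second, your handling of the extension to arbitrary $I$ is fine but can be streamlined: once you cite Kergin's abstract existence from~\cite{Ker1980}, his polynomial satisfies all interpolation conditions, in particular the initial-segment ones; by bijectivity of your $\Phi_\ux$, it must equal the Newton formula~\eqref{eq: definition K}, which therefore inherits the full interpolation property and the permutation invariance automatically. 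So the symmetry of the Newton formula is a \emph{consequence} of Kergin's existence plus your uniqueness step, not an independent input.
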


\begin{proof}
This is the content of~\cite[Thm.~12.5]{BHS1993} for $m=0$. See also~\cite{MM1980}.
\end{proof}

\begin{rem}
\label{rem: Kergin isomorphism}
In particular, Proposition~\ref{prop: Kergin interpolation} implies the following.
\begin{itemize}
\item The restriction of $K(\cdot,\ux)$ to $\R_{p-1}[X]$ is the identity.
\item If $x$ appears with multiplicity at least $k+1$ in $\ux$, then:
\begin{equation*}
D_x^kf = k! f[\underbrace{x,\dots,x}_{k+1 \ \text{times}}] = k! \parentheses*{\strut K(f,\ux)}[\underbrace{x,\dots,x}_{k+1 \ \text{times}}] = D_x^k \parentheses*{\strut K(f,\ux)}.
\end{equation*}
\item The map $P \mapsto \parentheses*{P[x_1,\dots,x_j]}_{1 \leq j \leq p}$ is an isomorphism from $\R_{p-1}[X]$ to $\bigoplus_{j=0}^{p-1} \sym^j(\R^n)$ whose inverse map is given by $(S_j)_{0 \leq j \leq p-1} \mapsto \sum_{j=0}^{p-1} S_j(X-x_1,\dots,X-x_j)$.
\end{itemize}
\end{rem}

\begin{dfn}[Kergin polynomial]
\label{def: Kergin polynomial}
The polynomial $K(f,\ux)$ from Proposition~\ref{prop: Kergin interpolation} is called the \emph{Kergin interpolating polynomial} of $f$ at $\ux$.
\end{dfn}

\begin{ex}
\label{ex: Kergin polynomial}
If $n=1$, then $K(f,\ux)$ is the Hermite interpolating polynomial of~$f$ at $\ux\in \R^p$. If $\ux=(x,\dots,x)$, then $K(f,\ux)$ is the Taylor polynomial of order $p-1$ of $f$ at $x \in \R^n$.
\end{ex}

\begin{lem}[Regularity of the Kergin polynomial]
\label{lem: regularity of the Kergin polynomial}
For all $\ux \in (\R^n)^p$, the map $K(\cdot,\ux)$ is linear. Moreover, if $f$ is $\cC^{l+p-1}$ then $K(f,\cdot)$ is of class $\cC^l$.
\end{lem}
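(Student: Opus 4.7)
The plan is to work directly from the explicit formula
\begin{equation*}
K(f,\ux) = \sum_{k=1}^{p} f[x_1,\dots,x_k]\parentheses*{X-x_1,\dots,X-x_{k-1}}
\end{equation*}
given in Equation~\eqref{eq: definition K}. Linearity in $f$ is immediate: each divided difference $f \mapsto f[x_1,\dots,x_k]$ is linear by Lemma~\ref{lem: regularity of divided differences}, the evaluation on $(X-x_1,\dots,X-x_{k-1})$ is a linear operation on $\sym^{k-1}(\R^n)$ valued in $\R_{p-1}[X]$, and a finite sum of linear maps is linear.

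For the regularity in $\ux$, I would treat each summand separately. Fix $k \in \ssquarebrackets{1}{p}$. Since $f$ is of class $\cC^{l+p-1}$ and $f[x_1,\dots,x_k]$ is a divided difference at $k$ points, which involves $k-1$ derivatives of~$f$, Lemma~\ref{lem: regularity of divided differences} applied with the parameter $(l+p-k)$ in place of $l$ (note $(k-1)+(l+p-k) = l+p-1$) gives that the map $(x_1,\dots,x_k) \mapsto f[x_1,\dots,x_k] \in \sym^{k-1}(\R^n)$ is of class $\cC^{l+p-k}$, hence at least $\cC^l$. The remaining factor $\parentheses*{X-x_1,\dots,X-x_{k-1}}$ represents evaluation of the symmetric $(k-1)$-linear form on the arguments $X-x_i$, which is polynomial (hence smooth) jointly in $\ux$ and $X$, and takes values in the finite-dimensional space~$\R_{p-1}[X]$. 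The product of a $\cC^l$ map and a smooth map is $\cC^l$, so each summand defines a $\cC^l$ map $\ux \mapsto \R_{p-1}[X]$, and summing preserves $\cC^l$ regularity.

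There is no serious obstacle here; the statement is an immediate corollary of the explicit formula from Proposition~\ref{prop: Kergin interpolation} combined with Lemma~\ref{lem: regularity of divided differences}. The only mild point to check is the regularity bookkeeping: verifying that the assumption $f \in \cC^{l+p-1}$ is tight enough to make the worst term (namely $k=p$, for which we need $f$ to be $\cC^{(p-1)+l}$) of class $\cC^l$, the other terms being automatically smoother.
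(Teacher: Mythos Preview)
Your proof is correct and follows exactly the same approach as the paper, which simply cites Lemma~\ref{lem: regularity of divided differences} and Equation~\eqref{eq: definition K}. You have merely unpacked the bookkeeping that the paper leaves implicit.
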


\begin{proof}
This is a consequence of Lemma~\ref{lem: regularity of divided differences} and Equation~\eqref{eq: definition K}.
\end{proof}

We need to prove a form of compatibility in Kergin interpolation, when the set of interpolation points is refined. We will use this fact to prove that the multijet bundle we define below satisfies Condition~\ref{item: thm multijet localness} in Theorem~\ref{thm: multijet bundle}. The following lemma is stated using the clustering partition $\cI(\ux)$ from Definition~\ref{def: clustering partition}.

\begin{lem}[Compatibility in Kergin interpolation]
\label{lem: compatibility Kergin interpolation}
For all $\ux \in (\R^n)^p$ the linear map from $\R_{p-1}[X]$ to $\prod_{I \in \cI(\ux)}\R_{\norm{I}-1}[X]$ defined by $\parentheses*{\strut K(\cdot,\ux_I)}_{I \in \cI(\ux)}:P \mapsto \parentheses*{\strut K(P,\ux_I)}_{I \in \cI(\ux)}$ is surjective.
\end{lem}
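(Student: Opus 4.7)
The plan is to first reduce the statement to a more concrete surjectivity question about Taylor polynomials. Write $\cI(\ux)=\{I_1,\dots,I_m\}$ and, for each cell $I_j$, let $y_j\in\R^n$ be the common value of $x_a$ for $a\in I_j$; the $y_j$ are pairwise distinct by definition of the clustering partition. Set $k_j:=\norm{I_j}-1$, so that $\sum_{j=1}^m(k_j+1)=p$. Since $\ux_{I_j}=(y_j,\dots,y_j)$, the second item of Example~\ref{ex: Kergin polynomial} identifies $K(P,\ux_{I_j})$ with the Taylor polynomial of $P$ of order $k_j$ at $y_j$. Hence the map in the lemma becomes
\begin{equation*}
\Phi:\R_{p-1}[X]\longrightarrow \prod_{j=1}^m \R_{k_j}[X],\qquad P\longmapsto \parentheses*{T_{k_j,y_j}P}_{1\leq j\leq m},
\end{equation*}
and I must show that $\Phi$ is surjective.

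Next I would translate this into ring-theoretic language. Let $\mathfrak{m}_{y}:=\brackets{P\in\R[X]\mvert P(y)=0}$. For every $y\in\R^n$ and $k\geq 0$, the inclusion-then-quotient map $\R_k[X]\to \R[X]/\mathfrak{m}_y^{k+1}$ is a linear isomorphism (injective because a polynomial of degree $\leq k$ vanishing to order $k+1$ at $y$ is zero, and both spaces have dimension $\binom{k+n}{n}$), and under this identification $T_{k,y}$ is precisely the quotient map. So it suffices to show that the composition $\R_{p-1}[X]\hookrightarrow \R[X]\twoheadrightarrow \prod_j \R[X]/\mathfrak{m}_{y_j}^{k_j+1}$ is surjective. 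The Chinese Remainder Theorem (applicable because the ideals $\mathfrak{m}_{y_j}^{k_j+1}$ are pairwise coprime, since $y_j\neq y_{j'}$) gives surjectivity of the right-hand map; the real task is to lift any element to $\R_{p-1}[X]$ while respecting the degree bound.

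The heart of the argument is a Lagrange-type explicit construction. For each $j_0\in\ssquarebrackets{1}{m}$, pick for every $j\neq j_0$ an affine linear form $\ell_j$ with $\ell_j(y_j)=0$ and $\ell_j(y_{j_0})=1$ (possible since $y_{j_0}\neq y_j$), and set
\begin{equation*}
\phi_{j_0}(X):=\prod_{j\neq j_0}\ell_j(X)^{k_j+1}\in \R_{p-k_{j_0}-1}[X].
\end{equation*}
By construction $\phi_{j_0}\in \mathfrak{m}_{y_j}^{k_j+1}$ for all $j\neq j_0$, while $\phi_{j_0}(y_{j_0})=1$, so multiplication by $\phi_{j_0}$ is a bijection of $\R[X]/\mathfrak{m}_{y_{j_0}}^{k_{j_0}+1}$. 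Given an arbitrary target $(\bar Q_j)_j\in\prod_j \R[X]/\mathfrak{m}_{y_j}^{k_j+1}$, for each $j_0$ I can therefore choose $R_{j_0}\in\R_{k_{j_0}}[X]$ with $R_{j_0}\phi_{j_0}\equiv \bar Q_{j_0}\bmod \mathfrak{m}_{y_{j_0}}^{k_{j_0}+1}$. Setting $P:=\sum_{j_0}R_{j_0}\phi_{j_0}$ yields a polynomial of degree at most $k_{j_0}+(p-k_{j_0}-1)=p-1$ whose image under $\Phi$ is $(\bar Q_j)_j$, proving surjectivity.

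The only step where care is needed is the degree bookkeeping together with the verification that $\phi_{j_0}$ is invertible in each local ring $\R[X]/\mathfrak{m}_{y_{j_0}}^{k_{j_0}+1}$; both are immediate once one writes out the construction, so I do not expect a genuine obstacle. The one conceptual point to keep in mind is that, in contrast with the one-variable case where $\Phi$ is a bijection by dimension count, $\Phi$ is merely surjective in dimension $n\geq 2$ because $\dim\R_{p-1}[X]=\binom{p-1+n}{n}$ strictly exceeds $\sum_j\binom{k_j+n}{n}$ in general; this mismatch is precisely why the multijet bundle will have a nontrivial kernel description later.
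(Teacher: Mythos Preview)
Your proof is correct and takes a genuinely different route from the paper. The paper's argument is analytic: given a target $(P_I)_{I\in\cI}$, it picks smooth bump functions $\chi_I$ equal to $1$ near $y_I$ with disjoint supports, sets $f=\sum_I\chi_IP_I\in\cC^\infty(\R^n)$, and then takes $P=K(f,\ux)\in\R_{p-1}[X]$; since $y_I$ has multiplicity $\norm{I}$ in $\ux$, Kergin interpolation forces $\jet_{\norm{I}-1}(P,y_I)=\jet_{\norm{I}-1}(f,y_I)=\jet_{\norm{I}-1}(P_I,y_I)$, hence $K(P,\ux_I)=P_I$. Your argument is purely algebraic: you build explicit Lagrange-type separators $\phi_{j_0}=\prod_{j\neq j_0}\ell_j^{k_j+1}$ of the right degree and solve in each local ring. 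The paper's proof is shorter and conceptually neat in that the Kergin operator itself furnishes the preimage, but it relies on bump functions; the paper explicitly notes later (in the holomorphic section) that this does not adapt to $\C$ and substitutes a jet-ampleness argument for $\cO(p-1)\to\C\P^n$. Your construction, by contrast, works verbatim over any infinite field and in particular handles the holomorphic analogue without extra input.
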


\begin{proof}
Let $\ux \in (\R^n)^p$ and let us write $\cI = \cI(\ux)$ for simplicity. As explained at the end of Section~\ref{subsec: Sets, partitions and diagonals}, there exists a unique $\uy=(y_I)_{I \in \cI} \in (\R^n)^\cI\setminus \Delta_\cI$ such that $\ux = \iota_\cI(\uy)$. Let $(\chi_I)_{I \in \cI}$ be smooth functions on $\R^n$ with pairwise disjoint compact supports and such that $\chi_I$ is equal to $1$ in a neighborhood of $y_I$.

Let $(P_I)_{I \in \cI} \in \prod_{I \in \cI}\R_{\norm{I}-1}[X]$, we consider the function $f = \sum_{I \in \cI} \chi_I P_I\in \cC^\infty(\R^n)$. Let $P = K(f,\ux)$ and let us prove that $K(P,\ux_I) = P_I$ for all $I \in \cI$. For all $k \leq \norm{I}-1$ we have $D^k_{y_I}P = D^k_{y_I}f = D^k_{y_I}P_I$. Indeed $y_I$ appears with multiplicity $\norm{I}$ in $\ux$ (see Remark~\ref{rem: Kergin isomorphism}) and $f$ is equal to $P_I$ in a neighborhood of $y_I$. Recalling Example~\ref{ex: Kergin polynomial}, we know that $K(P,\ux_I)$ is the Taylor polynomial of order $\norm{I}-1$ at $y_I$ of $P$, and hence of $P_I$. Since $P_I \in \R_{\norm{I}-1}[X]$, we get $K(P,\ux_I)=P_I$.
\end{proof}


\section{Evaluation maps and their kernels}
\label{sec: Evaluation maps and their kernels}

The goal of this section is to study evaluation maps on spaces of polynomials and their kernels. Defining multijets is closely related to these objects. Indeed, let $n \geq 1$ and $p \geq 1$ and recall that $\Delta_p$ stands for the large diagonal in $(\R^n)^p$; see Definition~\ref{def: large diagonal}. As explained in the introduction, when $\ux \notin \Delta_p$ we want the multijet of a $\cC^{p-1}$ function $f$ at $\ux$ to be the class of $f$ in $\cC^{p-1}(\R^n)/\sim$, where $f \sim g$ if and only if $\parentheses*{f(x_i)}_{1 \leq i \leq p}=\parentheses*{g(x_i)}_{1 \leq i \leq p}$. The Kergin interpolation of Section~\ref{subsec: Kergin interpolation} shows that any such class can be represented by a polynomial. Hence, the space of $p$-multijets at~$\ux$ is canonically isomorphic to $\R_{p-1}[X] / \ker \ev_{\ux}$, where $\ev_{\ux}:P \mapsto \parentheses{P(x_1),\dots,P(x_p)}$.

\begin{dfn}[Grassmannian]
\label{def: Grassmannian}
Let $V$ be a vector space of finite dimension $N$ and $k \in \ssquarebrackets{0}{N}$, we denote by $\gr[V]{k}$ the \emph{Grassmannian} of vector subspaces of $V$ of \emph{codimension} $k$.
\end{dfn}

\begin{rem}
\label{rem Grassmannian}
Beware that this notation is slightly unusual, since in most textbooks $\gr[V]{k}$ stands for the Grassmannian of subspaces of dimension $k$.
\end{rem}

Let us denote by $\mathcal{L}_\text{reg}\parentheses*{V,\R^k} \subset V^* \otimes \R^k$ the open dense subset of linear surjective maps from $V$ to $\R^k$. The group $GL_k(\R)$ acts on $\mathcal{L}_\text{reg}\parentheses*{V,\R^k}$ by multiplication on the left. On the other hand, $L \mapsto \ker(L)$ defines a surjective map from $\mathcal{L}_\text{reg}\parentheses*{V,\R^k}$ to $\gr[V]{k}$, and $\ker(L_1)=\ker(L_2)$ if and only if there exists $M \in GL_k(\R)$ such that $L_2 = M L_1$. Thus, one can identify $\gr[V]{k}$ with the orbit space $\mathcal{L}_\text{reg}\parentheses*{V,\R^k}/ GL_k(\R)$ of the previous action. This is one of the many ways to describe $\gr[V]{k}$ as a smooth real algebraic manifold.

\begin{dfn}[Evaluation map]
\label{def: evaluation map}
Let $\ux \in (\R^n)^p$, we set $\ev_{\ux}:f \mapsto \parentheses*{f(x_1),\dots,f(x_p)}$ from any space of functions defined at the $x_i$ to $\R^p$. The source space will always be clear from the context.
\end{dfn}

\begin{lem}[Non-degeneracy of $\ev_{\ux}$]
\label{lem: non-degeneracy of evx}
Let $\ux \notin \Delta_p$, then $\ev_{\ux}:\R_{p-1}[X] \to \R^p$ is surjective.
\end{lem}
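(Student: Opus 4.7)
The plan is to show surjectivity by exhibiting, for each $i \in \ssquarebrackets{1}{p}$, a polynomial $P_i \in \R_{p-1}[X]$ such that $\ev_{\ux}(P_i)$ is the $i$-th vector of the canonical basis of $\R^p$, via a Lagrange-type interpolation construction adapted to the multivariate setting.

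Fix $i \in \ssquarebrackets{1}{p}$. Since $\ux \notin \Delta_p$, for every $j \neq i$ we have $x_i \neq x_j$, so there exists an affine form $L_j:\R^n \to \R$ of degree $1$ with $L_j(x_j)=0$ and $L_j(x_i)=1$; concretely, pick any vector $a_j \in \R^n$ with $\prsc{a_j}{x_i-x_j} \neq 0$ and set $L_j(X) = \frac{\prsc{a_j}{X-x_j}}{\prsc{a_j}{x_i-x_j}}$. Then the product
\begin{equation*}
P_i = \prod_{j \in \ssquarebrackets{1}{p}\setminus\brackets{i}} L_j
\end{equation*}
lies in $\R_{p-1}[X]$, vanishes at every $x_j$ for $j \neq i$, and satisfies $P_i(x_i)=1$. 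Hence $\ev_{\ux}(P_i)$ is the $i$-th canonical basis vector of $\R^p$.

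Since the $p$ canonical basis vectors all lie in the image of the linear map $\ev_{\ux}$, this map is surjective. There is essentially no obstacle here: the only ingredient beyond elementary linear algebra is the observation that distinct points in $\R^n$ can be separated by an affine form, which is immediate.

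Alternatively, one could deduce the result directly from Proposition~\ref{prop: Kergin interpolation}: given any target $v=(v_1,\dots,v_p) \in \R^p$, choose a $\cC^\infty$ function $f$ with $f(x_i)=v_i$ (possible since the $x_i$ are pairwise distinct), and then $K(f,\ux) \in \R_{p-1}[X]$ satisfies $\ev_{\ux}\parentheses*{K(f,\ux)}=v$ because $K(f,\ux)[x_i]=f[x_i]=f(x_i)$ for every $i$. I would probably include the more elementary Lagrange argument in the write-up, since it avoids invoking the full machinery of Kergin interpolation for this basic statement.
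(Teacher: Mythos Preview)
Your proof is correct. Your main Lagrange-type argument is more elementary than the paper's: the paper observes that when $\ux \notin \Delta_p$ the clustering partition is $\cI(\ux)=\brackets*{\brackets{1},\dots,\brackets{p}}$, so that $\ev_{\ux}$ coincides with $\parentheses*{K(\cdot,x_i)}_{1 \leq i \leq p}$ under $\R_0[X]\simeq\R$, and then invokes Lemma~\ref{lem: compatibility Kergin interpolation}. That lemma in turn relies on Kergin interpolation together with a bump-function construction, so your direct product-of-affine-forms argument avoids a fair amount of machinery. Your alternative via Proposition~\ref{prop: Kergin interpolation} is essentially the paper's route, just bypassing the intermediate Lemma~\ref{lem: compatibility Kergin interpolation}. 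The paper also mentions a third approach---extracting a Vandermonde submatrix from the matrix of $\ev_{\ux}$ in the monomial basis---which is closer in spirit to your Lagrange argument but phrased on the matrix side rather than by explicit preimages. Either of your arguments would be a fine replacement; the Lagrange one has the advantage of being fully self-contained.
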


\begin{proof}
Since $\ux \notin \Delta_p$, we have $\cI(\ux)=\brackets*{\brackets{1},\dots,\brackets{p}}$ and $\ev_{\ux} = \parentheses*{\strut K(\cdot,x_i)}_{1 \leq i \leq p}$ under the canonical identification $\R_0[X] \simeq \R$. Hence this is just a special case of Lemma~\ref{lem: compatibility Kergin interpolation}. Alternatively, in the right basis, one can extract a Vandermonde matrix from that of $\ev_{\ux}$.
\end{proof}

Lemma~\ref{lem: non-degeneracy of evx} shows that the following map is well-defined from $\config$ to $\gr{p}$:
\begin{equation}
\label{eq: def G}
\cG:\ux \longmapsto \ker \ev_{\ux}.
\end{equation}

\begin{lem}[Algebraicity of $\cG$]
\label{lem: algebraicity of G}
The map $\cG:\config \to \gr{p}$ is algebraic.
\end{lem}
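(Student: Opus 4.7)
My plan is to factor $\cG$ as a composition of two algebraic maps, essentially ``take the matrix of $\ev_{\ux}$, then take its kernel''. First, I fix the monomial basis $(X^\alpha)_{\norm{\alpha} \leq p-1}$ of $\R_{p-1}[X]$ and the standard basis of $\R^p$. In these bases, the matrix of $\ev_{\ux}$ has $(i,\alpha)$-entry $x_i^\alpha$, which is a monomial in the coordinates of $\ux = (x_1,\dots,x_p) \in (\R^n)^p$. Consequently the map $\Phi \colon (\R^n)^p \to \R_{p-1}[X]^* \otimes \R^p$, $\ux \mapsto \ev_{\ux}$, is polynomial, hence algebraic.

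By Lemma~\ref{lem: non-degeneracy of evx}, $\Phi$ sends $\config$ into the open algebraic subvariety $\mathcal{L}_\text{reg}\parentheses*{\R_{p-1}[X], \R^p}$ of surjective linear maps, so $\Phi|_{\config}$ is a morphism of real algebraic varieties. As recalled just before Definition~\ref{def: evaluation map}, the Grassmannian $\gr{p}$ carries the algebraic structure coming from the identification $\gr{p} = \mathcal{L}_\text{reg}\parentheses*{\R_{p-1}[X], \R^p} / GL_p(\R)$, and with respect to this structure the map $L \mapsto \ker L$ is the (algebraic) quotient map. Therefore $\cG = \ker(\cdot) \circ \Phi|_{\config}$ is a composition of algebraic maps, which is the desired conclusion.

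To make the algebraicity of the kernel map fully explicit, one can precompose with the Plücker embedding $\gr{p} \hookrightarrow \P\parentheses*{\Lambda^p \R_{p-1}[X]^*}$: the Plücker coordinates of $\ker L$ in a fixed basis are, up to an overall $GL_p(\R)$-scalar, the $p \times p$ minors of the matrix of $L$, which are polynomial in the entries of $L$. Applied to $\Phi(\ux)$, they become generalized Vandermonde-type determinants in the coordinates of $\ux$, exhibiting $\cG$ in closed form. The only nontrivial point is the algebraicity of the kernel map on $\mathcal{L}_\text{reg}$; it is already built into the algebraic structure on $\gr{p}$ recalled in the text, and everything else is inspection.
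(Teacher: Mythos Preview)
Your proof is correct and follows essentially the same approach as the paper: factor $\cG$ as the composition of $\ux \mapsto \ev_{\ux}$ (polynomial in the monomial basis, with matrix $(x_i^\alpha)$) and the algebraic quotient map $\ker:\mathcal{L}_\text{reg}(\R_{p-1}[X],\R^p) \to \gr{p}$. The additional Plücker-coordinate paragraph is not in the paper but is a harmless elaboration of the same idea.
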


\begin{proof}
Recalling the previous discussion, we have $\mathcal{L}_\text{reg}(\R_{p-1}[X],\R^p)/GL_p(\R) \simeq \gr{p}$, where the isomorphism is obtained as the quotient map of $\ker:L \mapsto \ker(L)$. In particular,
\begin{equation*}
\ker:\mathcal{L}_\text{reg}(\R_{p-1}[X],\R^p) \longrightarrow \gr{p}\simeq \mathcal{L}_\text{reg}(\R_{p-1}[X],\R^p)/GL_p(\R)
\end{equation*}
is just the canonical projection, which is algebraic.

Writing $\ev:\ux \mapsto \ev_{\ux}$, we have $\cG = \ker \circ \ev$. Thus it is enough to prove that $\ev$ is algebraic from $\config$ to $\mathcal{L}_\text{reg}(\R_{p-1}[X],\R^p)$. In the basis of $\R_{p-1}[X]$ formed by the monomials $\parentheses*{X^\alpha}_{\norm{\alpha} < p}$, the matrix of $\ev_{\ux}$ is $\begin{pmatrix}
x_i^\alpha
\end{pmatrix}_{1 \leq i \leq p; \norm{\alpha} < p}$, which depends algebraically on $\ux$.
\end{proof}

Let $\ux \in \config$, we defined $\cG(\ux) \in \gr{p}$ by Equation~\eqref{eq: def G}. For any non-empty $I \subset \ssquarebrackets{1}{p}$, we define similarly
\begin{align}
\label{eq: def GI tilde}
\cG_I(\ux) &= \ker \ev_{\ux_I} \in \gr[{\R_{\norm{I}-1}[X]}]{\norm{I}} & &\text{and} & \tilde{\cG}_I(\ux) &= \ker \ev_{\ux_I} \in \gr{\norm{I}}.
\end{align}
Because of the interpolation properties of the Kergin polynomials (see Remark~\ref{rem: Kergin isomorphism}), we have that $\ev_{\ux_I} = \parentheses*{\ev_{\ux_I}}_{\vert \R_{\norm{I}-1}[X]} \circ K(\cdot,\ux_I)$ on $\R_{p-1}[X]$. Hence $\tilde{\cG}_I(\ux) = K(\cdot,\ux_I)^{-1}\parentheses*{\cG_I(\ux)}$. Since $K(\cdot,\ux_I)$ is surjective from $\R_{p-1}[X]$ to $\R_{\norm{I}-1}[X]$, this shows that $\tilde{\cG}_I(\ux)$ has indeed codimension $\norm{I}$, like $\cG_I(\ux)$.

This collection of subspaces satisfies some incidence relations that will be useful in the following. For all non-empty $I \subset \ssquarebrackets{1}{p}$, we have $\cG(\ux) \subset \tilde{\cG}_I(\ux)$. Actually, we can be more precise: for any $\cI \in \cP_p$, we have $\cG(\ux) = \bigcap_{I \in \cI} \tilde{\cG}_I(\ux)$, and this intersection is transverse by a codimension argument.

\begin{rem}
The map $\cG:\config \to \gr{p}$ does not admit an extension as a regular map from $(\R^n)^p$ to $\gr{p}$, except if $n=1$ or $p=1$, that is, if $\gr{p}$ is a point.

For example, when $n=2=p$, the Grassmannian $\gr{p}$ is the set of lines in $\R_1[X_1,X_2]$. Taking $x=R\parentheses{\cos \theta,\sin \theta} \in \R^2 \setminus \brackets{0}$ the reader can check that $\cG(0,x) = \Span\parentheses*{\strut X_1 \sin\theta - X_2 \cos\theta}$ which does not converge as $R \to 0$. However, in this case, $\cG(0,\cdot)$ extends to the blow-up $\bl{\R^2}{0}$ of $\R^2$ at $0$ and similarly $\cG$ extends smoothly to $\bl{(\R^2)^2}{\Delta_2}$. This suggests that, even though $\cG$ does not extend smoothly to $(\R^n)^p$, it might extend to a larger space.
\end{rem}


\section{Definition of the multijet bundles}
\label{sec: Definition of the multijet bundles}

In this section we define the vector bundle $\cMJ_p(\R^n,V) \to C_p[\R^n]$ of $p$-multijets for functions from~$\R^n$ to some finite-dimensional vector space $V$ and prove Theorem~\ref{thm: multijet bundle}. The singularity of $\cG$ along $\Delta_p$ makes it impossible to define such a bundle over $(\R^n)^p$, which is why we define it over a compactification $C_p[\R^n]$ of the configuration space $\config$.

The manifold $C_p[\R^n]$ does not depend on $V$. It is defined in~Section~\ref{subsec: Definition of the basis CpRn}. In the next two sections, we work in the case $V=\R$. All important ideas appear in this case but notation is slightly simpler. In Section~\ref{subsec: Definition of the bundle MPjRn}, we define the bundle $\cMJ_p(\R^n)$. In Section~\ref{subsec: Localness of multijets}, we prove that $p$-multijets are local, in the sense of Condition~\ref{item: thm multijet localness} in Theorem~\ref{thm: multijet bundle}. Finally, we define the bundle $\cMJ_p(\R^n,V)$ of multijets for vector valued maps and prove Theorem~\ref{thm: multijet bundle} in Section~\ref{subsec: Multijets of vector valued maps}.


\subsection{Definition of the basis \texorpdfstring{$C_p[\R^n]$}{}}
\label{subsec: Definition of the basis CpRn}

In this section, we define the basis $C_p[\R^n]$ over which our $p$-multijet bundles are defined. This is a smooth manifold, obtained a compactification of the configuration space $\config$ such that $\parentheses*{\cG_I}_{I \subset \ssquarebrackets{1}{p}}$ extends smoothly to $C_p[\R^n]$. Let us first introduce some notation. We denote by $\Pi_0$ the projection from the product space
\begin{equation*}
(\R^n)^p \times \prod_{\emptyset \neq I \subset \ssquarebrackets{1}{p}} \gr[{\R_{\norm{I}-1}[X]}]{\norm{I}}
\end{equation*}
onto the factor $(\R^n)^p$. Similarly, we denote by $\Pi_I$ the projection onto $\gr[{\R_{\norm{I}-1}[X]}]{\norm{I}}$. Then, let
\begin{equation}
\label{eq: def Sigma}
\Sigma = \brackets*{\parentheses*{\ux, \parentheses*{\strut\cG_I(\ux)}_{I \subset \ssquarebrackets{1}{p}}} \mvert \ux \in \config} \subset (\R^n)^p \times \prod_{\emptyset \neq I \subset \ssquarebrackets{1}{p}} \gr[{\R_{\norm{I}-1}[X]}]{\norm{I}}
\end{equation}
denote the graph of the map $\parentheses*{\cG_I}_{I \subset \ssquarebrackets{1}{p}}$. We denote by $\bar{\Sigma}$ the closure of $\Sigma$ in the product space on the right-hand side of Equation~\eqref{eq: def Sigma}.

\begin{lem}[Surjectivity of \texorpdfstring{$(\Pi_0)_{\vert \bar{\Sigma}}$}{}]
\label{lem: surjectivity of Pi0 on Sigma bar}
Let $\ux \in (\R^n)^p$, then there exists $z \in \bar{\Sigma}$ such that $\Pi_0(z)=\ux$.
\end{lem}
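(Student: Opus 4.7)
The plan is to use the compactness of each Grassmannian factor together with the density of $\config$ in $(\R^n)^p$, so that the statement reduces to an easy limit argument: any point of $(\R^n)^p$ is approachable by a sequence from $\config$, and the associated sequence in the graph $\Sigma$ must have a cluster point in $\bar\Sigma$ that lies above it.

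More precisely, I would first observe that the factor
\[
G := \prod_{\emptyset \neq I \subset \ssquarebrackets{1}{p}} \gr[{\R_{\norm{I}-1}[X]}]{\norm{I}}
\]
is compact, as a finite product of compact Grassmannians. The configuration space $\config$ is open and dense in $(\R^n)^p$, so given any $\ux \in (\R^n)^p$, I can choose a sequence $\ux^{(k)} \in \config$ with $\ux^{(k)} \to \ux$ as $k \to \infty$.

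Next, for every $k$ the point $z^{(k)} = \bigl(\ux^{(k)}, (\cG_I(\ux^{(k)}))_{I}\bigr)$ lies in $\Sigma$ by the definition in Equation~\eqref{eq: def Sigma}. The second coordinate of $z^{(k)}$ lives in the compact set $G$, so, after extracting a subsequence, $(\cG_I(\ux^{(k)}))_I$ converges to some element $\uL = (L_I)_I \in G$. Then $z^{(k)} \to (\ux, \uL)$ in the product topology. Since each $z^{(k)} \in \Sigma$, the limit $z := (\ux, \uL)$ lies in $\bar\Sigma$, and by construction $\Pi_0(z) = \ux$. This gives the desired preimage.

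There is really no hard step here: the argument is essentially closure + compactness, and the only ingredient beyond generalities is that $\config$ is dense in $(\R^n)^p$, which is immediate since $\Delta_p$ is a finite union of proper linear subspaces. The subtler content, namely whether such a limit $\uL$ is \emph{unique} or has any canonical structure (which will be needed later to turn $C_p[\R^n] \subset \bar\Sigma$ into a smooth manifold with the required properties), is not claimed in this lemma and will be handled subsequently via Hironaka's resolution of singularities applied to the algebraic set $\bar\Sigma$.
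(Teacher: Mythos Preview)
Your proof is correct and follows essentially the same approach as the paper: pick a sequence in $\config$ converging to $\ux$, use compactness of the Grassmannian factors to extract a convergent subsequence of the corresponding points of $\Sigma$, and observe that the limit lies in $\bar\Sigma$ above $\ux$. The only cosmetic difference is that the paper extracts once per factor while you invoke compactness of the full product $G$ in one step.
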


\begin{proof}
Let $\parentheses*{\underline{x_n}}_{n \in \N}$ be a sequence of points in $\config$ converging to $\ux$. Since Grassmannians are compact manifolds, up to extracting subsequences finitely many times, we can assume that for all non-empty $I \subset \ssquarebrackets{1}{p}$ there exists $G_I \in \gr[{\R_{\norm{I}-1}[X]}]{\norm{I}}$ such that $\cG_I\parentheses{\underline{x_n}} \xrightarrow[n \to +\infty]{} G_I$. Then
\begin{equation*}
\parentheses*{\underline{x_n},\parentheses*{\cG_I\parentheses{\underline{x_n}}}_{I \subset \ssquarebrackets{1}{p}}} \xrightarrow[n \to +\infty]{} \parentheses*{\ux, \parentheses*{G_I}_{I \subset \ssquarebrackets{1}{p}}}=z \in \bar{\Sigma}.\qedhere
\end{equation*}
\end{proof}

\begin{lem}[Location of the new points]
\label{lem: location of the new points}
We have $\bar{\Sigma} \setminus \Sigma \subset \Pi_0^{-1}(\Delta_p)$.
\end{lem}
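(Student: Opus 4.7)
The plan is to argue by contrapositive: I will show that if $z \in \bar{\Sigma}$ satisfies $\Pi_0(z) \in \config = (\R^n)^p \setminus \Delta_p$, then $z$ already lies in $\Sigma$. In view of the definition~\eqref{eq: def Sigma}, this amounts to identifying the ``Grassmannian coordinates'' of such a $z$ with the values of the maps $\cG_I$.

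The main ingredient is the continuity of each map $\cG_I : \config \to \gr[{\R_{\norm{I}-1}[X]}]{\norm{I}}$. This follows from the same argument as Lemma~\ref{lem: algebraicity of G} applied to $\cG_I$: when $\ux \notin \Delta_p$, one has $\ux_I \notin \Delta_I$ for every non-empty $I \subset \ssquarebrackets{1}{p}$, so by Lemma~\ref{lem: non-degeneracy of evx} the evaluation $\ev_{\ux_I}$ lies in the open subset $\mathcal{L}_\text{reg}(\R_{\norm{I}-1}[X],\R^{\norm{I}})$, on which taking the kernel is algebraic (and in particular continuous) as a map to the Grassmannian.

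With this in hand, the proof is immediate. Take $z = \parentheses*{\ux,(G_I)_I} \in \bar{\Sigma}$ with $\ux \in \config$. By definition of $\bar{\Sigma}$, there exists a sequence $\parentheses{\underline{x_n}}_{n \in \N}$ in $\config$ such that
\begin{equation*}
\parentheses*{\underline{x_n}, \parentheses*{\cG_I(\underline{x_n})}_{I \subset \ssquarebrackets{1}{p}}} \xrightarrow[n \to +\infty]{} \parentheses*{\ux,(G_I)_I}.
\end{equation*}
In particular, $\underline{x_n} \to \ux$ inside the open set $\config$, so by the continuity of $\cG_I$ recalled above, $\cG_I(\underline{x_n}) \to \cG_I(\ux)$ for each non-empty $I \subset \ssquarebrackets{1}{p}$. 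Uniqueness of limits in the Grassmannian then gives $G_I = \cG_I(\ux)$ for all such $I$, so that $z \in \Sigma$ by~\eqref{eq: def Sigma}. The contrapositive is exactly the statement of the lemma, and no step presents any real obstacle — the content is just the continuity of $\cG_I$ on the configuration space.
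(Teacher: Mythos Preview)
Your proof is correct and follows essentially the same approach as the paper. The paper's argument is more succinct: it simply observes that $\Sigma$, being the graph of a continuous map on $\config$ (with Hausdorff target), is closed in the open set $\Pi_0^{-1}(\config)$, whence $\bar{\Sigma}\cap\Pi_0^{-1}(\config)=\Sigma$. Your sequential argument is exactly an unfolding of this closed-graph fact.
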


\begin{proof}
Since $\Sigma$ is the graph of a continuous function on $\config$, it is closed in the open subset $\Pi_0^{-1}\parentheses*{\config}$. Hence $\bar{\Sigma} \cap \Pi_0^{-1}\parentheses*{\config} = \Sigma$ and $\bar{\Sigma} \setminus \Sigma \subset \Pi_0^{-1}(\Delta_p)$.
\end{proof}

\begin{lem}[Algebraicity of \texorpdfstring{$\Sigma$ and $\bar{\Sigma}$}{}]
\label{lem: algebraicity of Sigma bar}
The graph $\Sigma$ is a smooth real algebraic manifold and $(\Pi_0)_{\vert \Sigma}:\Sigma \to \config$ is an isomorphism. Moreover, $\bar{\Sigma}$ is a real algebraic variety whose singular locus is contained in $\bar{\Sigma} \setminus \Sigma$.
\end{lem}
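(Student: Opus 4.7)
The proof splits naturally into three steps.

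\emph{Step 1 (Properties of $\Sigma$).} The argument of Lemma~\ref{lem: algebraicity of G} applies verbatim to each non-empty $I \subset \ssquarebrackets{1}{p}$: in the monomial basis of $\R_{\norm{I}-1}[X]$, the evaluation map $\ev_{\ux_I}$ has a Vandermonde-like matrix with entries polynomial in $\ux_I$, and taking the kernel is the canonical algebraic projection onto $\gr[{\R_{\norm{I}-1}[X]}]{\norm{I}}$. Hence each $\cG_I$ is algebraic on $\config$, and $\Sigma$ is by definition the graph of the algebraic map $(\cG_I)_I$ over the Zariski open set $\config$. The graph of an algebraic morphism defined on a Zariski open subset is a Zariski locally closed subvariety of the ambient product, hence a smooth real algebraic manifold. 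The graph map $\ux \mapsto (\ux,(\cG_I(\ux))_I)$ is an algebraic inverse for $\Pi_0|_\Sigma$, so this projection is an isomorphism of real algebraic manifolds, and in particular $\Sigma$ has dimension $np$.

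\emph{Step 2 ($\bar\Sigma$ is a real algebraic variety).} Let $Z$ denote the Zariski closure of $\Sigma$ in the ambient product of Equation~\eqref{eq: def Sigma}. Since $\Sigma$ is the graph of an algebraic morphism on the irreducible Zariski open set $\config$, it is irreducible, so $Z$ is an irreducible real algebraic variety of dimension $np$ that contains $\bar\Sigma$. The reverse inclusion $Z \subset \bar\Sigma$ is the heart of the matter. The key ingredient is the compactness of the Grassmannian factors, which makes the ambient projection $\Pi_0$ proper on $Z$; combined with Lemma~\ref{lem: surjectivity of Pi0 on Sigma bar} this gives $\Pi_0(\bar\Sigma) = (\R^n)^p$. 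Applying a curve-selection lemma inside the irreducible variety $Z$ (every point of $Z$ is reached along a real algebraic arc whose generic point lies in the Zariski open subset $\Sigma$) then forces $Z \subset \bar\Sigma$, so $\bar\Sigma = Z$ is a real algebraic variety.

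\emph{Step 3 (Singular locus).} Combining Lemma~\ref{lem: location of the new points} with the inclusion $\Sigma \subset \Pi_0^{-1}(\config)$ yields $\Sigma = \bar\Sigma \cap \Pi_0^{-1}(\config)$, exhibiting $\Sigma$ as a Zariski open subset of $\bar\Sigma$. By Step~1 each point of $\Sigma$ is smooth in $\bar\Sigma$, so the singular locus of $\bar\Sigma$ is contained in the Zariski closed complement $\bar\Sigma \setminus \Sigma$.

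\emph{Main obstacle.} Step~2 is the delicate part: over $\R$, the Euclidean and Zariski closures of a semi-algebraic set can disagree in general, so establishing $\bar\Sigma = Z$ genuinely requires exploiting both the irreducibility of $\Sigma$ and the properness arising from compactness of the Grassmannian factors, most naturally through a curve-selection argument.
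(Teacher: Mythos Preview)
Your Steps~1 and~3 are correct and essentially match the paper. You are also right that Step~2 deserves care: the paper's one-line justification (``Since $\Sigma$ is real algebraic, so is its closure~$\bar\Sigma$'') glosses over the fact that, over~$\R$, the Euclidean closure of a Zariski locally closed set need not be the real locus of its Zariski closure~$Z$.

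However, your proposed fix has a genuine gap. The curve selection lemma produces a real arc from a point of the \emph{Euclidean} closure of a semi-algebraic set into that set; it says nothing about arbitrary points of the Zariski closure. Your parenthetical claim---that every real point of~$Z$ is reached by a real arc with generic point in~$\Sigma$---is false for irreducible real varieties in general: on the nodal cubic $\{y^2=x^2(x-1)\}\subset\R^2$, the origin lies in the Zariski closure of the smooth locus but is Euclidean-isolated from its real points (which lie over $x\geq 1$), so no such real arc exists. Properness of~$\Pi_0$ and Lemma~\ref{lem: surjectivity of Pi0 on Sigma bar} only yield $\Pi_0(\bar\Sigma)=(\R^n)^p$; they do not exclude extra real points of~$Z$ over~$\Delta_p$ that are disconnected from~$\Sigma$ in the Euclidean topology. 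What actually forces $\bar\Sigma=Z(\R)$ here is a feature you do not exploit: $\Pi_0\colon Z\to(\R^n)^p$ is proper \emph{birational} onto a \emph{smooth} base. One way to finish is to pass to a resolution $Z'\to Z$ obtained by blow-ups along smooth centers (these are surjective on real points, so $Z'(\R)\to Z(\R)$ is onto); on the smooth irreducible~$Z'$ the preimage of~$\Sigma$ has Zariski closed complement of strictly smaller dimension, hence is Euclidean dense in every component of the manifold~$Z'(\R)$, and pushing down gives $Z(\R)\subset\bar\Sigma$.
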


\begin{proof}
By Lemma~\ref{lem: algebraicity of G}, the set $\Sigma$ is the graph of an algebraic map, hence a smooth real algebraic manifold. Besides, $\Pi_0$ is algebraic and its restriction to $\Sigma$ is the inverse of $\ux \mapsto \parentheses*{\ux, \parentheses*{\strut\cG_I(\ux)}_{I \subset \ssquarebrackets{1}{p}}}$. Thus $(\Pi_0)_{\vert \Sigma}$ is an algebraic isomorphism from $\Sigma$ onto $\config$. Since $\Sigma$ is real algebraic, so is its closure~$\bar{\Sigma}$. By Lemma~\ref{lem: location of the new points}, we know that $\bar{\Sigma} \cap \Pi_0^{-1}\parentheses*{\config} = \Sigma$ is smooth. Hence, the singular locus of $\bar{\Sigma}$ is contained in $\bar{\Sigma} \cap \Pi_0^{-1}(\Delta_p)= \bar{\Sigma} \setminus \Sigma$.
\end{proof}

\begin{ex}
\label{ex: Sigma bar}
In simple cases, we understand very well what $\bar{\Sigma}$ is.
\begin{itemize}
\item If $p=1$ and $n \geq 1$, then $\Delta_p = \emptyset$ and $\gr{p}=\brackets*{\brackets{0}}$, so that $\bar{\Sigma} = \Sigma = \R^n$.
\item If $n=1$ and $p \geq 1$, then $\gr[{\R_{\norm{I}-1}[X]}]{\norm{I}} = \brackets*{\brackets{0}}$ for all $I \subset \ssquarebrackets{1}{p}$ and $\bar{\Sigma} = \R^p$.
\item If $p=2$ and $n \geq 2$, then for $x \neq y$ in $\R^n$, we know that $\cG(x,y) \subset \R_1[X]$ is the subspace of affine forms on $\R^n$ vanishing at $x$ and $y$, i.e.,~on the affine line through $x$ and $y$. Thus $\cG(x,y)$ encodes this line. As $y \to x$, the accumulation points of $\cG(x,y)$ correspond to all the affine lines passing through $x$, and they encode ``the direction from which $y$ converges to~$x$''. In this case, one can check that $\bar{\Sigma} = \bl{(\R^n)^2}{\Delta_2}$.
\end{itemize}
\end{ex}

In the previous examples the variety $\bar{\Sigma}$ is smooth. Hence the following natural question.

\begin{ques}
Is $\bar{\Sigma}$ smooth for all $n \geq 1$ and $p \geq 1$?
\end{ques}

Lacking a positive answer to this question, since we want $C_p[\R^n]$ to be a smooth manifold, we will define it by resolving the singularities of~$\bar{\Sigma}$. The existence of a resolution of singularities is given by Hironaka's theorem~\cite{Hir1964,Hir1964a}. Our references on this matter are Kollàr~\cite{Kol2007} and Wlodarczyk~\cite{Wlo2005}. See also~\cite{Hau2003} for a softer introduction to this theory.

\begin{prop}[Resolution of singularities]
\label{prop: resolution of singularities}
There exists a smooth manifold $C_p[\R^n]$ without boundary of dimension $np$ and a smooth proper $\Pi:C_p[\R^n] \to (\R^n)^p \times \prod_{\emptyset \neq I \subset \ssquarebrackets{1}{p}} \gr[{\R_{\norm{I}-1}[X]}]{\norm{I}}$ such that:
\begin{enumerate}
\item \label{item: resolution surjective} $\Pi\parentheses*{C_p[\R^n]}=\bar{\Sigma}$;
\item \label{item: resolution dense} $\Pi^{-1}(\Sigma)$ is an open dense subset of $C_p[\R^n]$;
\item \label{item: resolution diffeo} $\Pi_{\vert \Pi^{-1}(\Sigma)}$ is $\cC^\infty$-diffeomorphism from $\Pi^{-1}(\Sigma)$ onto $\Sigma$.
\end{enumerate}
\end{prop}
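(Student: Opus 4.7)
The plan is to obtain $C_p[\R^n]$ as a desingularization of the real algebraic variety $\bar{\Sigma}$ introduced in Equation~\eqref{eq: def Sigma}, and to take $\Pi$ as the composition of the resolution map with the inclusion of $\bar{\Sigma}$ into the ambient product space. Lemma~\ref{lem: algebraicity of Sigma bar} is the key input: it tells us that the singular locus of $\bar{\Sigma}$ misses $\Sigma$, which is exactly what is needed to ensure the diffeomorphism and density properties above $\Sigma$.

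I would first record the dimension: by Lemma~\ref{lem: algebraicity of Sigma bar}, $\Sigma$ is isomorphic as a real algebraic manifold to $\config$ via $(\Pi_0)_{\vert \Sigma}$, hence $\dim \Sigma = np$. Since $\Sigma$ is dense in $\bar{\Sigma}$, this gives $\dim \bar{\Sigma} = np$. I would then apply Hironaka's theorem on the resolution of singularities of real algebraic varieties~\cite{Hir1964,Hir1964a}, in the constructive form of~\cite{Kol2007,Wlo2005}. This yields a smooth real algebraic manifold $\tilde{\Sigma}$ of dimension $np$ (and in particular a smooth manifold without boundary), together with a proper surjective algebraic map $\sigma:\tilde{\Sigma}\to\bar{\Sigma}$ obtained as a finite composition of blow-ups along smooth centers contained in the successive singular loci. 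The central feature of this construction, for our purposes, is that $\sigma$ restricts to an algebraic isomorphism above the smooth locus of $\bar{\Sigma}$, and in particular to a $\cC^\infty$-diffeomorphism there.

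I would then set $C_p[\R^n] := \tilde{\Sigma}$ and $\Pi := \iota \circ \sigma$, where $\iota:\bar{\Sigma} \hookrightarrow (\R^n)^p \times \prod_{\emptyset \neq I \subset \ssquarebrackets{1}{p}} \gr[{\R_{\norm{I}-1}[X]}]{\norm{I}}$ is the inclusion. Condition~\ref{item: resolution surjective} is immediate from the surjectivity of $\sigma$. By Lemma~\ref{lem: algebraicity of Sigma bar}, the singular locus of $\bar{\Sigma}$ is contained in $\bar{\Sigma} \setminus \Sigma$, so $\Sigma$ lies in the smooth locus; hence $\Pi^{-1}(\Sigma) = \sigma^{-1}(\Sigma)$ is open in $\tilde{\Sigma}$ and $\sigma$ restricts to a diffeomorphism onto $\Sigma$, which proves~\ref{item: resolution diffeo} and the openness half of~\ref{item: resolution dense}. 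Density follows from the surjectivity of $\sigma$ together with the density of $\Sigma$ in $\bar{\Sigma}$. For properness: if $K$ is compact in the target space, then $\bar{\Sigma}\cap K$ is compact as a closed subset of $K$, and $\Pi^{-1}(K) = \sigma^{-1}(\bar{\Sigma} \cap K)$ is then compact by the properness of $\sigma$.

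The main care will be in invoking Hironaka's theorem correctly in the real algebraic setting: one must verify that the chosen resolution algorithm produces a proper map which restricts to an isomorphism over the smooth locus. Both properties are standard features of the constructive algorithms in~\cite{Kol2007,Wlo2005}: the resolution is built as a finite tower of blow-ups along smooth centers sitting inside the (iterated) singular locus, so each blow-up is the identity outside that locus, and properness is preserved along the tower since every blow-up along a smooth center is proper. No further delicate issue arises, because the algebraic content of Section~\ref{sec: Evaluation maps and their kernels} has already done the work of packaging $C_p[\R^n]$ as a desingularization of an explicit real algebraic variety.
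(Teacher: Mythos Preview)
Your approach is the same as the paper's: apply Hironaka's resolution of singularities (in the constructive form of \cite{Kol2007,Wlo2005}) to the real algebraic variety $\bar{\Sigma}$, and read off the three conditions from the standard properties of such a resolution.

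One small point to tighten: the sentence ``density follows from the surjectivity of $\sigma$ together with the density of $\Sigma$ in $\bar{\Sigma}$'' is not a valid deduction on its own---a proper surjection need not pull back dense open sets to dense sets (e.g.\ collapse a whole component of the source to a single boundary point of the target). The correct reason, which you do supply in your final paragraph and which the paper invokes via \cite[Thm.~1.0.2]{Wlo2005}, is that the blow-up centers sit inside the singular locus and hence miss $\Sigma$; thus at each stage the preimage of $\Sigma$ is the complement of the exceptional divisor, which is a nowhere-dense hypersurface, and density is preserved along the tower. You should route the density claim through that argument rather than through surjectivity.
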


\begin{proof}
We apply Hironaka's theorem~\cite[Thm.~3.27]{Kol2007} to resolve the singularities of~$\bar{\Sigma}$. Since $\bar{\Sigma}$ is algebraic by Lemma~\ref{lem: algebraicity of Sigma bar}, there exists a smooth real algebraic manifold $C_p[\R^n]$ and a projective morphism $\Pi:C_p[\R^n] \to \bar{\Sigma}$ such that $\Pi$ is an isomorphism over the smooth locus of $\bar{\Sigma}$.

In particular $C_p[\R^n]$ is smooth, the map $\Pi:C_p[\R^n] \to (\R^n)^p \times \prod_{\emptyset \neq I \subset \ssquarebrackets{1}{p}} \gr[{\R_{\norm{I}-1}[X]}]{\norm{I}}$ is smooth and proper, and $\Pi\parentheses*{C_p[\R^n]} \subset \bar{\Sigma}$. Since $\Sigma$ is contained in the smooth locus of $\bar{\Sigma}$, the restriction of $\Pi$ to $\Pi^{-1}(\Sigma)$ is an isomorphism; in particular Condition~\ref{item: resolution diffeo} is satisfied.

According to~\cite[Thm.~1.0.2]{Wlo2005}, the manifold $C_p[\R^n]$ and the projection $\Pi$ are obtained by a sequence of blow-ups along smooth submanifolds that do not intersect the regular locus of $\bar{\Sigma}$, and hence $\Sigma$. This ensures that Conditions~\ref{item: resolution surjective} and~\ref{item: resolution dense} are satisfied.
\end{proof}

The following corollary proves the existence of the manifold $C_p[\R^n]$ and the proper surjection $\pi:C_p[\R^n]\to (\R^n)^p$ satisfying Condition~\ref{item: thm multijet pi} in Theorem~\ref{thm: multijet bundle}.

\begin{cor}[Existence of the basis \texorpdfstring{$C_p[\R^n]$}{}]
\label{cor: existence of the basis CpRn}
There exists a smooth manifold $C_p[\R^n]$ without boundary of dimension $np$ and a smooth proper surjection $\pi:C_p[\R^n] \to (\R^n)^p$ such that:
\begin{enumerate}
\item \label{item: compactification} the open subset $\pi^{-1}\parentheses*{\config}$ is dense in $C_p[\R^n]$ and $\pi$ induces a $\cC^\infty$-diffeomorphism from this set onto $\config$;
\item \label{item: extension of GI} for any non-empty $I \subset \ssquarebrackets{1}{p}$, the map $\cG_I \circ \pi$ admits a unique smooth extension to $C_p[\R^n]$.
\end{enumerate}
\end{cor}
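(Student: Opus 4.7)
The plan is to deduce the corollary directly from Proposition~\ref{prop: resolution of singularities}: keep the manifold $C_p[\R^n]$ produced there and define $\pi := \Pi_0 \circ \Pi : C_p[\R^n] \to (\R^n)^p$. The dimension of $C_p[\R^n]$ is automatically $np$, since $\Sigma$ is the graph of an algebraic map on the $np$-dimensional space $\config$ and a resolution of singularities preserves dimension. Smoothness of $\pi$ is immediate as a composition of smooth maps, and surjectivity follows from $\pi\parentheses*{C_p[\R^n]} = \Pi_0(\bar{\Sigma}) = (\R^n)^p$, which is exactly Lemma~\ref{lem: surjectivity of Pi0 on Sigma bar}.

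The main step requiring actual verification is the properness of $\pi$. Since $\Pi$ is already proper by Proposition~\ref{prop: resolution of singularities}, it suffices to check that the restriction of $\Pi_0$ to $\bar{\Sigma}$ is proper. This will follow from the fact that $\bar{\Sigma}$ is closed in the product $(\R^n)^p \times G$, where $G := \prod_{\emptyset \neq I \subset \ssquarebrackets{1}{p}} \gr[{\R_{\norm{I}-1}[X]}]{\norm{I}}$ is compact as a finite product of Grassmannians: for any compact $K \subset (\R^n)^p$, the set $\Pi_0^{-1}(K) \cap \bar{\Sigma}$ is then closed in the compact set $K \times G$ and hence compact. This is really the only non-bookkeeping step in the argument.

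For Condition~\ref{item: compactification}, Lemma~\ref{lem: location of the new points} gives $\bar{\Sigma} \cap \Pi_0^{-1}\parentheses*{\config} = \Sigma$, so that $\pi^{-1}\parentheses*{\config} = \Pi^{-1}(\Sigma)$, which is dense in $C_p[\R^n]$ by item~\ref{item: resolution dense} of Proposition~\ref{prop: resolution of singularities}. The restriction $\pi_{\vert \pi^{-1}(\config)}$ then factors as $(\Pi_0)_{\vert \Sigma} \circ \Pi_{\vert \Pi^{-1}(\Sigma)}$, where the first factor is a diffeomorphism onto $\Sigma$ by item~\ref{item: resolution diffeo} and the second is an isomorphism onto $\config$ by Lemma~\ref{lem: algebraicity of Sigma bar}; the composition is therefore a $\cC^\infty$-diffeomorphism onto $\config$.

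For Condition~\ref{item: extension of GI}, the key observation is that by the very definition of $\Sigma$ as the graph of $\parentheses*{\cG_I}_I$ we have $\cG_I \circ \Pi_0 = \Pi_I$ on $\Sigma$. Composing on the right with $\Pi$ yields $\cG_I \circ \pi = \Pi_I \circ \Pi$ on the open dense subset $\pi^{-1}\parentheses*{\config} = \Pi^{-1}(\Sigma)$, and the right-hand side $\Pi_I \circ \Pi$ is smooth and defined on the entire manifold $C_p[\R^n]$. Thus $\Pi_I \circ \Pi$ furnishes the desired smooth extension, and uniqueness is immediate from the density of $\pi^{-1}\parentheses*{\config}$.
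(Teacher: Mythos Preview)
Your proof is correct and follows essentially the same approach as the paper: define $\pi = \Pi_0 \circ \Pi$, deduce properness from the compactness of the Grassmannian factor, and verify Conditions~\ref{item: compactification} and~\ref{item: extension of GI} via Lemmas~\ref{lem: location of the new points}, \ref{lem: algebraicity of Sigma bar} and the identity $\cG_I \circ \pi = \Pi_I \circ \Pi$ on $\Pi^{-1}(\Sigma)$. The only cosmetic difference is that the paper observes directly that $\Pi_0$ is proper on the full product $(\R^n)^p \times G$ (projection off a compact factor), whereas you check properness of its restriction to $\bar{\Sigma}$; both are equivalent.
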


\begin{proof}
We consider $\Pi:C_p[\R^n] \to (\R^n)^p \times \prod_{\emptyset \neq I \subset \ssquarebrackets{1}{p}} \gr[{\R_{\norm{I}-1}[X]}]{\norm{I}}$ given by Proposition~\ref{prop: resolution of singularities} and we let $\pi = \Pi_0 \circ \Pi$. Since Grassmannians are compact, $\Pi_0$ is proper. Hence $\pi$ is smooth and proper because $\Pi$ and $\Pi_0$ are. The surjectivity of $\pi$ is given by Lemma~\ref{lem: surjectivity of Pi0 on Sigma bar} and Condition~\ref{item: resolution surjective} in Proposition~\ref{prop: resolution of singularities}.

Item~\ref{item: compactification} in Corollary~\ref{cor: existence of the basis CpRn} is a consequence of Lemmas~\ref{lem: location of the new points} and~\ref{lem: algebraicity of Sigma bar} and of Conditions~\ref{item: resolution dense} and~\ref{item: resolution diffeo} in Proposition~\ref{prop: resolution of singularities}. Let $I \subset \ssquarebrackets{1}{p}$ be non-empty. On the dense open subset $\pi^{-1}\parentheses*{\config}$ we have $\cG_I \circ \pi = \Pi_I \circ \Pi$ by definition. In the last equality, the right-hand side is well-defined and smooth on~$C_p[\R^n]$, which yields the unique extension we are looking for.
\end{proof}

Since it is defined using Hironaka's theorem, the manifold $C_p[\R^n]$ is not unique. However, the value of the smooth extension of $\cG_I \circ \pi = \Pi_I \circ \Pi$ at $z \in C_p[\R^n]$ only depends on $\Pi(z) \in \bar{\Sigma}$. So this extension does not really depend on the choice of a resolution of singularities. In the following we choose once and for all a realization of $\pi:C_p[\R^n] \to (\R^n)^p$ as in Corollary~\ref{cor: existence of the basis CpRn}. Thanks to Condition~\ref{item: compactification}, we can identify the configuration space $\config$ with its open dense pre-image by $\pi$. Under this identification, Condition~\ref{item: extension of GI} states that the maps $\parentheses*{\cG_I}_{I \subset \ssquarebrackets{1}{p}}$ extend smoothly to~$C_p[\R^n]$. So, from now on, we consider $\cG_I$ as a smooth map from $C_p[\R^n]$ to $\gr[{\R_{\norm{I}-1}[X]}]{\norm{I}}$.


\subsection{Definition of the bundle \texorpdfstring{$\cMJ_p(\R^n)$}{}}
\label{subsec: Definition of the bundle MPjRn}

Now that we have defined the base space $C_p[\R^n]$ of our multijet bundle, we can define the bundle itself. The purpose of this section is to construct the vector bundle $\cMJ_p(\R^n) \to C_p[\R^n]$ of multijets for functions from $\R^n$ to $\R$, and the associated multijet map. The construction for vector valued maps, explained in Section~\ref{subsec: Multijets of vector valued maps}, is basically a fiberwise direct sum of this simpler case.

Recall that we defined the following projections $C_p[\R^n] \xrightarrow[]{\Pi} \bar{\Sigma} \xrightarrow[]{\Pi_0} (\R^n)^p$ and that $\pi = \Pi_0 \circ \Pi$. Thanks to Corollary~\ref{cor: existence of the basis CpRn}, and under the identification discussed above, the map $\cG = \cG_{\ssquarebrackets{1}{p}}$ defined by Equation~\eqref{eq: def G} extends as a smooth map from $C_p[\R^n]$ to $\gr{p}$. Seen as a collection of subspaces of $\R_{p-1}[X]$ indexed by $C_p[\R^n]$, this means that $\cG$ defines a smooth vector sub-bundle of corank $p$ in the trivial bundle $\R_{p-1}[X] \times C_p[\R^n] \to C_p[\R^n]$. We define our multijet bundle as the quotient of this trivial bundle by $\cG$.

\begin{dfn}[Vector bundle of multijets]
\label{def: vector bundle of multijets}
Let $n \geq 1$ and $p\geq 1$, the \emph{vector bundle of multijets of order $p$ on $\R^n$} is the smooth vector bundle of rank $p$ over $C_p[\R^n]$ defined by:
\begin{equation*}
\cMJ_p(\R^n) = \parentheses*{\R_{p-1}[X] \times C_p[\R^n]}/ \cG.
\end{equation*}
In particular, for any $z \in C_p[\R^n]$, the fiber $\cMJ_p(\R^n)_z = \R_{p-1}[X]/\cG(z)$ only depends on $\Pi(z) \in \bar{\Sigma}$.
\end{dfn}

Recalling the definition of Kergin polynomials given in Section~\ref{subsec: Kergin interpolation}, we can now define the $p$-multijet of a $\cC^{p-1}$ function on $\R^n$.

\begin{dfn}[Multijet of a function]
\label{def: multijet of a function}
Let $f \in \cC^{p-1}(\R^n)$ and $z \in C_p[\R^n]$, the \emph{multijet of $f$ at $z$} is the element of $\cMJ_p(\R^n)_z$ defined as:
\begin{equation*}
\mj_p(f,z) = K(f,\pi(z)) \mod \cG(z).
\end{equation*}
In particular, as an element of $\R_{p-1}[X]/\cG(z)$, the multijet $\mj_p(f,z)$ only depends on $\Pi(z) \in \bar{\Sigma}$.
\end{dfn}

\begin{ex}
\label{ex: multijet bundle}
In Example~\ref{ex: Sigma bar} we saw that in simple cases $\bar{\Sigma}$ is smooth. In these cases we set $C_p[\R^n] =\bar{\Sigma}$ and we can describe the bundle $\cMJ_p(\R^n)\to C_p[\R^n]$ and the map $\mj_p$.
\begin{itemize}
\item If $p=1$, then $C_1[\R^n]=\R^n$ and $\cG:x \mapsto \brackets{0} \subset \R_0[X]\simeq \R$. Thus $\cMJ_1(\R^n)$ is the trivial bundle $\R \times \R^n \to \R^n$. Moreover, if $f \in \cC^0(\R^n)$ then $K(f,x) = f(x) \in \R_0[X] \simeq \R$ and $\mj_1(f,x) = f(x)$ for all $x \in \R^n$.

\item If $n=1$, then $C_p[\R]=\R^p$ and $\cG:\ux \mapsto \brackets{0} \subset \R_{p-1}[X]$. Thus $\cMJ_p(\R)$ is the trivial bundle $\R_{p-1}[X] \times \R^p \to \R^p$. If $f \in \cC^{p-1}(\R)$ then $\mj_p(f,\ux)=K(f,\ux)$ is the Hermite interpolating polynomial of $f$ at $\ux$; see Example~\ref{ex: Kergin polynomial}.

Given $\ux=(x_1,\dots,x_p) \notin \Delta_p$, Lemma~\ref{lem: non-degeneracy of evx} shows that $\ev_{\ux}:\R_{p-1}[X] \to \R^p$ is an isomorphism. We can then consider the Lagrange basis $\parentheses*{L_i(\ux)}_{1 \leq i \leq p}$ of $\R_{p-1}[X]$ which is the pre-image by $\ev_{\ux}$ of the canonical basis of $\R^p$. We then have $\mj_p(f,\ux)=K(f,\ux) = \sum_{i=1}^p f(x_i) L_i(\ux)$. Geometrically, this means that the map $(P,\ux) \mapsto \parentheses*{\ev_{\ux}(P),\ux}$ defines a local trivialization of $\cMJ_p(\R)\to C_p[\R]$ over $\R^p \setminus \Delta_p$ and that $\ux \mapsto \parentheses*{L_i(\ux)}_{1 \leq i \leq p}$ is the corresponding frame. Moreover, it is tautological that $\mj_p(f,\ux)$ reads as $\parentheses*{f(x_1),\dots,f(x_p)}$ in this trivialization.

In this example, one can also define a global trivialization of $\cMJ_p(\R)$ by considering the global frame of Newton polynomials $\ux \mapsto \parentheses*{N_k(\ux)}_{1 \leq k \leq p}$, where $N_k(\ux) = \prod_{1 \leq i <k} (X-x_i)$. By Equation~\eqref{eq: definition K} we have $K(f,\ux) = \sum_{k=1}^p f[x_1,\dots,x_k]N_k(\ux)$, so that $\mj_p(f,\ux)$ reads as $\parentheses*{f[x_1,\dots,x_k]}_{1 \leq k \leq p}$ in this trivialization, where the divided differences are the classical ones in dimension~$1$. In this setting, we used in~\cite{AL2021} a strategy that can be roughly summarized as replacing $\parentheses*{f(x_i)}_{1 \leq i \leq p}$ by $\parentheses*{f[x_1,\dots,x_k]}_{1 \leq k \leq p}$. Our present point of view shows that we were actually considering $\mj_p(f,\ux)$ all along, but read in different trivializations.

\item If $p=2$, we saw that $C_2[\R^n] = \bl{(\R^n)^2}{\Delta_2}$. Given $z \in C_2[\R^n]$, if $\pi(z) =(x_1,x_2) \notin \Delta_2$, we know that $\cG(z) \subset \R_1[X]$ is the subspace of affine forms vanishing on the line $L_z \subset \R^n$ through $x_1$ and $x_2$. It is then natural to think of the class of $P$  modulo $\cG(z)$ as its restriction to~$L_z$. Parametrizing $L_z$ by $t \mapsto x_1 + \frac{x_2-x_1}{\Norm{x_2-x_1}}t$, one can check that $P \mapsto P\parentheses*{x_1 + \frac{x_2-x_1}{\Norm{x_2-x_1}}T}$ induces an isomorphism $\cMJ_2(\R^n)_z \simeq \R_1[T] \simeq \R^2$, where $T$ is univariate and the second isomorphism is obtained by reading coordinates in the canonical basis $(1,T)$ of $\R_1[T]$.

Recalling Definition~\ref{def: divided differences}, we have:
\begin{equation*}
P[x_1,x_2]\cdot \frac{x_2-x_1}{\Norm{x_2-x_1}}T = \parentheses*{\int_0^1 D_{x_1 + t(x_2-x_1)}P\cdot(x_2-x_1) \dx t} \frac{T}{\Norm{x_2-x_1}} = \frac{P(x_2)-P(x_1)}{\Norm{x_2-x_1}}T,
\end{equation*}
and $P = K(P,x_1,x_2)$ is given by Equation~\eqref{eq: definition K}. Letting $\tilde{P}(z) = \frac{P(x_2)-P(x_1)}{\Norm{x_2-x_1}}$, we have:
\begin{equation*}
P\parentheses*{x_1 + \frac{x_2-x_1}{\Norm{x_2-x_1}}T} = K(P,x_1,x_2)\parentheses*{x_1 + \frac{x_2-x_1}{\Norm{x_2-x_1}}T} = P(x_1) + \tilde{P}(z)T.
\end{equation*}
Thus, the previous isomorphism $\cMJ_2(\R^n)_z \to \R^2$ is $\parentheses*{P \mod \cG(z)}\mapsto \parentheses*{P(\pi(z)_1),\tilde{P}(z)}$.

Let us now consider $z \in \pi^{-1}(\Delta_2)$. This exceptional divisor is the projectivized normal bundle of $\Delta_2$ in $(\R^n)^2$. So we can think of $z$ as a point in the diagonal, say $(x,x) \in \Delta_2$, and a line in $(\R^n)^2$ which is orthogonal to $\Delta_2$, say spanned by $(u,-u)$ with $u \in \S^{n-1}$. Then $z = \lim_{\epsilon \to 0}\parentheses{x+\epsilon u,x-\epsilon u}$ in $C_2[\R^n]$. By continuity, $\cG(z)$ is the space of affine forms vanishing on the line $L_z \subset \R^n$ parametrized by $t \mapsto x+tu$. As above, mapping $P$ to the coefficients of $P\parentheses*{x+Tu} =P(x) + \parentheses*{D_xP\cdot u}T \in \R_1[T]$ induces an isomorphism $\cMJ_2(\R^n)_z \to \R^2$. Letting $\tilde{P}(z)=D_xP\cdot u$, this isomorphism is again $\parentheses*{P \mod \cG(z)}\mapsto \parentheses*{P(\pi(z)_1),\tilde{P}(z)}$.

Actually, one can check that everything depends smoothly on the base point $z \in C_2[\R^n]$, so that the bundle map 
$\parentheses*{\strut P \mod \cG(z),z} \longmapsto \parentheses*{P(\pi(z)_1),\tilde{P}(z),z}$ defines a global trivialization $\cMJ_2(\R^n) \to \R^2 \times C_2[\R^n]$. If $f \in \cC^1(\R^n)$, with the same notation as above, $\mj_2(f,z)$ reads in this trivialization as $\parentheses*{f(x_1),\frac{f(x_2)-f(x_1)}{\Norm{x_2-x_1}}}$ if $z \notin \pi^{-1}(\Delta_2)$ and as $\parentheses*{f(x),D_xf\cdot u}$ otherwise.
\end{itemize}
\end{ex}

In these examples, the multijet bundle $\cMJ_p(\R^n) \to C_p[\R^n]$ is trivial. This raises the following.

\begin{ques}
Is $\cMJ_p(\R^n) \to C_p[\R^n]$ trivial for all $n \geq 1$ and $p \geq 1$?
\end{ques}

The following two lemmas prove that the bundle map $\mj_p:\cC^{p-1}(\R^n) \times C_p[\R^n] \to \cMJ_p(\R^n)$ satisfies Condition~\ref{item: thm multijet mj} and~\ref{item: thm multijet ev} in Theorem~\ref{thm: multijet bundle}.

\begin{lem}[Regularity of multijets]
\label{lem: regularity of multijets}
The map $\mj_p(\cdot,z):\cC^{p-1}(\R^n)\to \cMJ_p(\R^n)_z$ is a linear surjection for all $z \in C_p[\R^n]$. Additionally, let $l \geq 0$ and let $f\in \cC^{l+p-1}(\R^n)$, then $\mj_p(f,\cdot)$ is a section of class $\cC^l$ of $\cMJ_p(\R^n)\to C_p[\R^n]$.
\end{lem}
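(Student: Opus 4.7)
The plan is to unpack the definition $\mj_p(f,z) = K(f,\pi(z)) \mod \cG(z)$ and combine three facts already established: the linearity of Kergin interpolation (Lemma~\ref{lem: regularity of the Kergin polynomial}), the identification of $\cMJ_p(\R^n) \to C_p[\R^n]$ as the quotient of the trivial bundle $\R_{p-1}[X] \times C_p[\R^n]$ by the smooth sub-bundle $\cG$ (Definition~\ref{def: vector bundle of multijets}), and the smoothness of $\pi:C_p[\R^n] \to (\R^n)^p$ (Corollary~\ref{cor: existence of the basis CpRn}).

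First I would treat linearity and surjectivity. Fix $z \in C_p[\R^n]$. The map $\mj_p(\cdot,z)$ is the composition of $f \mapsto K(f,\pi(z))$, which is linear from $\cC^{p-1}(\R^n)$ to $\R_{p-1}[X]$ by Lemma~\ref{lem: regularity of the Kergin polynomial}, with the canonical linear projection $\R_{p-1}[X] \to \R_{p-1}[X]/\cG(z)= \cMJ_p(\R^n)_z$. Linearity is immediate. For surjectivity, it suffices to show that every $P \in \R_{p-1}[X]$ lies in the image. But a polynomial is $\cC^\infty$, hence $\cC^{p-1}$, and by Remark~\ref{rem: Kergin isomorphism} the restriction of $K(\cdot,\pi(z))$ to $\R_{p-1}[X]$ is the identity, so $\mj_p(P,z) = P \mod \cG(z)$. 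As $P$ ranges over $\R_{p-1}[X]$, its class ranges over the whole quotient.

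Next I would handle the regularity statement. Let $f \in \cC^{l+p-1}(\R^n)$. By Lemma~\ref{lem: regularity of the Kergin polynomial}, the map $\ux \mapsto K(f,\ux)$ is a $\cC^l$ map from $(\R^n)^p$ to the finite-dimensional vector space $\R_{p-1}[X]$. Since $\pi$ is smooth by Corollary~\ref{cor: existence of the basis CpRn}, the composition $z \mapsto K(f,\pi(z))$ is a $\cC^l$ section of the trivial bundle $\R_{p-1}[X] \times C_p[\R^n] \to C_p[\R^n]$. By Definition~\ref{def: vector bundle of multijets}, $\cMJ_p(\R^n)$ is the quotient of this trivial bundle by the smooth vector sub-bundle $\cG$, so the canonical projection $\R_{p-1}[X] \times C_p[\R^n] \to \cMJ_p(\R^n)$ is a smooth bundle morphism. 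Composing with this smooth projection preserves the $\cC^l$ regularity, giving that $\mj_p(f,\cdot) = \bigl(K(f,\pi(\cdot)) \mod \cG(\cdot)\bigr)$ is a $\cC^l$ section of $\cMJ_p(\R^n) \to C_p[\R^n]$.

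No step is really an obstacle here: the content of the lemma is packaged into earlier results, and the proof is essentially a matter of chasing definitions. The only point requiring mild care is to justify that passing to the quotient by the smooth sub-bundle $\cG$ preserves regularity of sections, which follows from the local triviality of $\cMJ_p(\R^n)$ coming from the smoothness of $\cG$ as a map into $\gr{p}$.
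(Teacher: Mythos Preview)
Your proof is correct and follows essentially the same approach as the paper's own proof: factor $\mj_p(\cdot,z)$ as $K(\cdot,\pi(z))$ followed by the quotient projection to get linearity and surjectivity, then use the $\cC^l$ regularity of $K(f,\cdot)$ from Lemma~\ref{lem: regularity of the Kergin polynomial}, the smoothness of $\pi$, and the fact that $\cG$ is a smooth sub-bundle to obtain the regularity of $\mj_p(f,\cdot)$.
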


\begin{proof}
Let $z \in C_p[\R^n]$, the map $K(\cdot,\pi(z)):\cC^{p-1}(\R^n) \to \R_{p-1}[X]$ is linear by Lemma~\ref{lem: regularity of the Kergin polynomial}. It is also surjective since its restriction to $\R_{p-1}[X]$ is the identity. Since $\mj_p(\cdot,z)$ is the composition of $K(\cdot,\pi(z))$ with the canonical projection from $\R_{p-1}[X]$ onto $\cMJ_p(\R^n)_z$, it is a linear surjection.

Let $l \geq 0$ and let $f \in \cC^{l+p-1}(\R^n)$. By Lemma~\ref{lem: regularity of the Kergin polynomial}, we have $K(f,\cdot)\in \cC^l\parentheses*{(\R^n)^p,\R_{p-1}[X]}$. Since $\pi$ is smooth, we get $K(f,\cdot) \circ \pi \in \cC^l\parentheses*{C_p[\R^n],\R_{p-1}[X]}$. In other words, $K(f,\cdot) \circ \pi$ defines a section of class $\cC^l$ of the trivial bundle $\R_{p-1}[X] \times C_p[\R^n] \to C_p[\R^n]$. Since $\cG$ is a smooth sub-bundle of $\R_{p-1}[X] \times C_p[\R^n]$, projecting onto the quotient bundle $\cMJ_p(\R^n)$ does not decrease the regularity.
\end{proof}

\begin{lem}[Multijets and evaluation]
\label{lem: multijets and evaluation}
Let $z \in C_p[\R^n]$ be such that $\pi(z)=(x_1,\dots,x_p) \notin \Delta_p$. Then for all $f \in \cC^{p-1}(\R^n)$ we have $\mj_p(f,z)=0$ if and only if, for all $i \in \ssquarebrackets{1}{p}$, $f(x_i)=0$.
\end{lem}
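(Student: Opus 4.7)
The strategy is to unwind the definitions and use the interpolation property of the Kergin polynomial at the points of $\ux = \pi(z)$.

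First, I would observe that since $\pi(z) \notin \Delta_p$, the point $z$ belongs to the dense open subset $\pi^{-1}\parentheses*{\config} \subset C_p[\R^n]$, on which (via the identification from Corollary~\ref{cor: existence of the basis CpRn}, Item~\ref{item: compactification}) the smooth extension $\cG$ from Corollary~\ref{cor: existence of the basis CpRn}, Item~\ref{item: extension of GI} coincides with the original map defined by Equation~\eqref{eq: def G}. Therefore $\cG(z) = \ker \ev_{\ux}$, where $\ux = \pi(z) = (x_1,\dots,x_p)$.

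Next, by Definition~\ref{def: multijet of a function}, $\mj_p(f,z) = K(f,\ux) \mod \cG(z)$, so $\mj_p(f,z) = 0$ if and only if $K(f,\ux) \in \ker \ev_{\ux}$, i.e.,~$K(f,\ux)(x_i) = 0$ for every $i \in \ssquarebrackets{1}{p}$. Now apply the defining interpolation property of the Kergin polynomial from Proposition~\ref{prop: Kergin interpolation}: taking $I = \brackets{i}$ yields $f[x_i] = K(f,\ux)[x_i]$, which under the canonical identification $\sym^0(\R^n) \simeq \R$ reads $f(x_i) = K(f,\ux)(x_i)$. Hence $K(f,\ux)(x_i) = 0$ if and only if $f(x_i) = 0$, and combining these equivalences gives the desired conclusion.

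There is no real obstacle here: the statement is essentially a sanity check that, on the dense open subset where the original construction is non-singular, the multijet recovers exactly the equivalence relation ``vanishing at the $x_i$'' that motivated the construction in the first place. The only mild subtlety is to check that $\cG(z)$ on $\pi^{-1}\parentheses*{\config}$ really is $\ker \ev_{\ux}$, which is guaranteed by the uniqueness of the smooth extension together with the fact that $\pi$ restricts to a diffeomorphism on this open set.
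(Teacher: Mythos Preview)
Your proof is correct and follows essentially the same approach as the paper's: unwind the definition of $\mj_p(f,z)$, identify $\cG(z)$ with $\ker \ev_{\ux}$ on the dense open subset, and use the interpolation property $K(f,\ux)(x_i)=f(x_i)$. The paper's version is terser (it writes $\cG(\ux)$ directly and cites Remark~\ref{rem: Kergin isomorphism} for the interpolation), but the logical content is identical.
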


\begin{proof}
Let us denote by $\ux = (x_1,\dots,x_p) = \pi(z) \notin \Delta_p$. For all $f \in \cC^{p-1}(\R^n)$, we have:
\begin{equation*}
\mj_p(f,z)=0 \iff K(f,\ux) \in \cG(\ux) \iff \ev_{\ux} \parentheses*{\strut K(f,\ux)} = 0 \iff \ev_{\ux} \parentheses*{f} = 0,
\end{equation*}
since $K(f,\ux)$ interpolates the values of $f$ on at $x_1,\dots,x_p$ (see Remark~\ref{rem: Kergin isomorphism}).
\end{proof}

Actually, we can describe more precisely the relation between multijets and evaluation outside of the diagonal. This will appear in the proof of Theorem~\ref{thm: moments Rn local} below. Thanks to Lemma~\ref{lem: non-degeneracy of evx}, the smooth bundle map $\ev:(P,\ux) \mapsto \parentheses*{\ev_{\ux}(P),\ux}$ from $\R_{p-1}[X] \times \parentheses*{\config}$ to $\R^p \times \parentheses*{\config}$ is surjective and its kernel is exactly the sub-bundle $\ker \ev = \cG$. Thus it induces a smooth bundle map $\tau:\cMJ_p(\R^n)_{\vert \config} \to \R^p \times \parentheses*{\config}$ defined by $\tau \parentheses*{P \mod \cG(\ux)} = \parentheses*{\ev_{\ux}(P),\ux}$, which is bijective. Thus $\tau$ defines a smooth local trivialization of $\cMJ_p(\R^n)$ over $\config$. Moreover, for all $f \in \cC^{p-1}(\R^n)$ and $z \in C_p[\R^n]$ such that $\ux = \pi(z) \notin \Delta_p$ we have:
\begin{equation*}
\tau\parentheses*{\mj_p(f,z)} = \tau\parentheses*{\strut K(f,\ux) \mod \cG(\ux)} = \parentheses*{\ev_{\ux}\parentheses*{\strut K(f,\ux)},\ux} = \parentheses*{\ev_{\ux}(f),\ux}.
\end{equation*}
Hence $\mj_p(f,z)$ simply reads as $\parentheses*{f(x_1),\dots,f(x_p)}$ in this trivialization.


\subsection{Localness of multijets}
\label{subsec: Localness of multijets}

The goal of this section is to prove that the multijet bundle $\cMJ_p(\R^n) \to C_p[\R^n]$ defined in the previous section satisfies Condition~\ref{item: thm multijet localness} in Theorem~\ref{thm: multijet bundle}. Let $z \in C_p[\R^n]$, let $\ux=\pi(z)$ and let $\cI = \cI(\ux)$ be as in Definition~\ref{def: clustering partition}. As explained in Section~\ref{subsec: Sets, partitions and diagonals}, there is a unique $\uy = \parentheses*{y_I}_{I \in \cI} \in \R^\cI \setminus \Delta_\cI$ such that $\ux = \iota_\cI(\uy)$. Recalling that we dropped $V=\R$ from the notation in the present case, we can restate Condition~\ref{item: thm multijet localness} in Theorem~\ref{thm: multijet bundle} as: there exists $\Theta_z : \prod_{I \in \cI} \cJ_{\norm{I}-1}(\R^n)_{y_I} \to \cMJ_p(\R^n)_z$ a linear surjection such that $\mj_p(f,z) = \Theta_z\parentheses*{\parentheses*{\jet_{\norm{I}-1}(f,y_I)}_{I \in \cI}}$ for all $f \in \cC^{p-1}(\R^n)$.

This property is fundamental. First, it shows that $\mj_p(f,z)$ is obtained by patching together (part of) the jets of order $(\norm{I}-1)$ of $f$ at $y_I$, which justifies the name multijet. Second, it shows that $\mj_p(f,z)$ only depends on the values of $f$ in arbitrarily small neighborhoods of the $y_I$. This is not obvious at all since the definition of $\mj_p(f,z)$ involves divided differences of~$f$, which are obtained by integrating on the whole convex hull $\sigma(\ux)$ of the $x_i$ (see Definition~\ref{def: divided differences}). In particular, it shows that $\mj_p(f,z)$ makes sense even if $f$ is only $\cC^{\norm{I}-1}$ in some neighborhood of $y_I$, for all $I \in \cI$. Hence Definition~\ref{def: multijets} makes sense even if $\Omega$ is not convex.

In the following, we consider what we think of as the $I$-th part of a multijet, where $I \subset \ssquarebrackets{1}{p}$. This is just a variation on what we did in Definitions~\ref{def: vector bundle of multijets} and~\ref{def: multijet of a function} and it is defined as follows.

\begin{dfn}[$I$-multijets]
\label{def: I multijets}
Let $n \geq 1$ and $p\geq 1$ and let $I \subset \ssquarebrackets{1}{p}$ be non-empty, we define the bundle of \emph{$I$-multijets} as the following smooth bundle of rank $\norm{I}$ over $C_p[\R^n]$:
\begin{equation*}
\cMJ_I(\R^n) = \parentheses*{\R_{\norm{I}-1}[X] \times C_p[\R^n]} / \cG_I.
\end{equation*}
Let $f\in\cC^{\norm{I}-1}(\R^n)$ and $z \in C_p[\R^n]$, we define by $\mj_I(f,z) = K(f,\pi(z)_I) \mod \cG_I(z) \in \cMJ_I(\R^n)_z$  the \emph{$I$-multijet} of $f$ at $z$.
\end{dfn}

As explained in Section~\ref{subsec: Definition of the basis CpRn}, for all $\emptyset \neq I \subset \ssquarebrackets{1}{p}$ we have a map $\cG_I:C_p[\R^n] \to \gr[{\R_{\norm{I}-1}[X]}]{\norm{I}}$ extending the one on $\config$. Let $z \in C_p[\R^n]$ and $\ux = \pi(z)$. As in Section~\ref{sec: Evaluation maps and their kernels} we define $\tilde{\cG}_I(z) = K(\cdot,\ux_I)^{-1}\parentheses*{\cG_I(z)}\in \gr{\norm{I}}$, where $K(\cdot,\ux_I):\R_{p-1}[X] \to \R_{\norm{I}-1}[X]$. Note that $\tilde{\cG}_I(z)$ has the same codimension as $\cG_I(z)$ since $K(\cdot,\ux_I)$ is surjective.

\begin{lem}[Compatibility of the \texorpdfstring{$\cG_I$}{}]
\label{lem: compatibility GI}
For all $I \subset \ssquarebrackets{1}{p}$ and $z \in C_p[\R^n]$ we have $\cG(z) \subset \tilde{\cG}_I(z)$.
\end{lem}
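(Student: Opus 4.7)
The plan is to first establish the inclusion on the dense open subset $\pi^{-1}(\config)$, where both sides admit concrete descriptions as kernels of evaluation maps, and then extend to all of $C_p[\R^n]$ by continuity.

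For the first step, take $z \in \pi^{-1}(\config)$ and set $\ux = \pi(z)$. By Equation~\eqref{eq: def G}, we have $\cG(z) = \ker \ev_{\ux}$ inside $\R_{p-1}[X]$. Using the factorization $\ev_{\ux_I} = \parentheses*{\ev_{\ux_I}}_{\vert \R_{\norm{I}-1}[X]} \circ K(\cdot,\ux_I)$ on $\R_{p-1}[X]$ recorded just after Equation~\eqref{eq: def GI tilde}, the subspace $\tilde{\cG}_I(z) = K(\cdot,\ux_I)^{-1}(\cG_I(z))$ coincides with the kernel of $\ev_{\ux_I}:\R_{p-1}[X]\to\R^{\norm{I}}$. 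Since $\cG(z)$ cuts out the vanishing of $P(x_i)$ for \emph{every} $i\in\ssquarebrackets{1}{p}$ while $\tilde{\cG}_I(z)$ only imposes this for $i\in I$, the inclusion $\cG(z)\subset \tilde{\cG}_I(z)$ is tautological. This is precisely the observation already made at the end of Section~\ref{sec: Evaluation maps and their kernels}.

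To propagate the inclusion to all of $C_p[\R^n]$, I would run a density argument based on the continuity of both sides. The map $\cG:C_p[\R^n]\to\gr{p}$ is smooth by Corollary~\ref{cor: existence of the basis CpRn}, and so is $\cG_I:C_p[\R^n]\to \gr[{\R_{\norm{I}-1}[X]}]{\norm{I}}$. The Kergin operator $(P,\ux)\mapsto K(P,\ux_I)$ is smooth by Lemma~\ref{lem: regularity of the Kergin polynomial}, and its restriction to $\R_{p-1}[X]$ is everywhere surjective onto $\R_{\norm{I}-1}[X]$ (since it is already the identity on $\R_{\norm{I}-1}[X]$). Hence the pointwise pre-image $\tilde{\cG}_I(z) = K(\cdot,\pi(z)_I)^{-1}(\cG_I(z))$ defines a smooth, in particular continuous, map $C_p[\R^n]\to\gr{\norm{I}}$: pulling back a smoothly varying subspace by a smoothly varying surjective linear map of constant corank is a standard construction on Grassmannians. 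The inclusion relation $\{(G_1,G_2)\in\gr{p}\times \gr{\norm{I}}\mvert G_1\subset G_2\}$ is closed, being cut out by the vanishing of the natural bundle map from the tautological sub-bundle over $\gr{p}$ to the tautological quotient bundle over $\gr{\norm{I}}$. Therefore the locus $\{z\in C_p[\R^n]\mvert \cG(z)\subset \tilde{\cG}_I(z)\}$ is closed, contains the dense subset $\pi^{-1}(\config)$ by the first step, and thus equals $C_p[\R^n]$.

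The only genuinely delicate point is the continuity of $\tilde{\cG}_I$ at points where $\pi(z)\in \Delta_p$: one must check that the surjective Kergin operator $K(\cdot,\pi(z)_I):\R_{p-1}[X]\to \R_{\norm{I}-1}[X]$ varies smoothly enough in $z$ to ensure smooth dependence of its pre-image. This however follows directly from Lemma~\ref{lem: regularity of the Kergin polynomial} applied to polynomial inputs (for which all finite-order regularity needed is automatic) together with the smoothness of $\pi$. Once this is granted, the density argument closes the proof with no further computation.
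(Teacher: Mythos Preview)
Your proof is correct and follows essentially the same approach as the paper: verify the inclusion on the dense open subset $\pi^{-1}(\config)$ using the concrete description from Section~\ref{sec: Evaluation maps and their kernels}, then extend by density using that the incidence relation is closed and both sides vary continuously. The only cosmetic difference is that the paper rewrites the inclusion as $K(\cdot,\pi(z)_I)\parentheses*{\cG(z)} \subset \cG_I(z)$, which sidesteps your discussion of the continuity of $\tilde{\cG}_I$ by appealing directly to the continuity of $K(\cdot,\pi(\cdot)_I)$, $\cG$ and $\cG_I$ separately.
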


\begin{proof}
We know from Section~\ref{sec: Evaluation maps and their kernels} that $\cG(z) \subset \tilde{\cG}_I(z)$ for any $z \in \config \subset C_p[\R^n]$, that is, $K(\strut \cdot,\pi(z)_I)\parentheses{\strut \cG(z)} \subset \cG_I(z)$. This incidence relation is a closed condition. By construction, the subset $\config$ is dense in $C_p[\R^n]$ and both terms in the previous inclusion are continuous with respect to $z$; see Lemma~\ref{lem: regularity of the Kergin polynomial} and Corollary~\ref{cor: existence of the basis CpRn}. Hence the inclusion actually holds for any $z \in C_p[\R^n]$. Thus $\cG(z) \subset \tilde{\cG}_I(z)$ for all $z \in C_p[\R^n]$.
\end{proof}

Let $\emptyset\neq I \subset \ssquarebrackets{1}{p}$, let $z \in C_p[\R^n]$ and $\ux=\pi(z)$. We consider $\mj_I(\cdot,z):\R_{p-1}[X] \to \cMJ_I(\R^n)_z$ from Definition~\ref{def: I multijets}. This linear map is surjective as the composition of $K(\cdot,\ux_I)$ and the projection modulo $\cG_I(z)$. Moreover, $\ker(\mj_I(\cdot,z)) = \tilde{\cG}_I(z)$ contains $\cG(z)$ by Lemma~\ref{lem: compatibility GI}. Hence, $\mj_I(\cdot,z)$ induces a surjective linear map from $\cMJ_p(\R^n)_z = \R_{p-1}[X]/\cG(z)$ onto $\cMJ_I(\R^n)_z$ that we still denote by $\mj_I(\cdot,z)$. This is summarized in the following commutative diagram, where the vertical arrows are the canonical projections and all arrows are surjective.
\begin{equation}
\label{eq: diagram mjI}
\begin{tikzcd}[column sep=huge]
\R_{p-1}[X]
	\arrow{r}{K(\cdot,\ux_I)}
	\arrow{d}
	\arrow[sloped]{rd}{\mj_I(\cdot,z)} &
\R_{\norm{I}-1}[X]
	\arrow{d} \\
\cMJ_p(\R^n)_z
	\arrow{r}{\mj_I(\cdot,z)} &
\cMJ_I(\R^n)_z
\end{tikzcd}
\end{equation}

Note that $(P,z) \mapsto \parentheses*{K(P,\pi(z)_I),z}$ is a smooth bundle map over $C_p[\R^n]$ from $\R_{p-1}[X] \times C_p[\R^n]$ to $\R_{\norm{I}-1}[X] \times C_p[\R^n]$. Hence, the previous diagram~\eqref{eq: diagram mjI} defines a smooth surjective bundle map $\mj_I:\parentheses*{P  \mod \cG(z)} \mapsto \parentheses*{P \mod \cG_I(z)}$ from $\cMJ_p(\R^n)$ to $\cMJ_I(\R^n)$ over $C_p[\R^n]$.

\begin{dfn}[Partitioned multijet]
\label{def: partitioned multijet}
For all $\cI \in \cP_p$ and $z \in C_p[\R^n]$, we define a linear map from $\cMJ_p(\R^n)_z$ to $\prod_{I \in \cI} \cMJ_I(\R^n)_z$ by $\mj_\cI(\cdot,z): \alpha \mapsto \parentheses*{\mj_I(\alpha,z)}_{I \in \cI}$.
\end{dfn}

As above, $\mj_\cI:(\alpha,z) \mapsto \mj_\cI(\alpha,z)$ defines a smooth bundle map over $C_p[\R^n]$ from $\cMJ_p(\R^n)$ to $\bigoplus_{I \in \cI} \cMJ_I(\R^n)$, which is obtained as the quotient of $(P,z) \mapsto \parentheses*{\parentheses*{K(P,\pi(z)_I}_{I \in \cI},z}$. However, $\mj_\cI(\cdot,z)$ is not surjective in general. The following lemma proves its surjectivity in some cases.

\begin{lem}[Splitting of multijets]
\label{lem: splitting of multijets}
Let $z \in C_p[\R^n]$, let $\ux =\pi(z)$ and let $\cI(\ux)$ be defined as in Definition~\ref{def: clustering partition}. Then $\mj_{\cI(\ux)}(\cdot,z):\cMJ_p(\R^n)_z \to \prod_{I \in \cI(\ux)}\cMJ_I(\R^n)_z$ is an isomorphism.
\end{lem}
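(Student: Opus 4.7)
The plan is to prove the lemma by a dimension count combined with surjectivity, the surjectivity being a direct consequence of the compatibility property for Kergin interpolation established in Lemma~\ref{lem: compatibility Kergin interpolation}.

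First, I would compare dimensions. The bundle $\cMJ_p(\R^n) \to C_p[\R^n]$ has rank $p$ by Definition~\ref{def: vector bundle of multijets}, so $\dim \cMJ_p(\R^n)_z = p$. On the target side, each $\cMJ_I(\R^n)_z$ has dimension $\norm{I}$ by Definition~\ref{def: I multijets}, and since $\cI(\ux)$ is a partition of $\ssquarebrackets{1}{p}$, we get $\sum_{I \in \cI(\ux)} \norm{I} = p$. Thus both the source and the target of $\mj_{\cI(\ux)}(\cdot,z)$ have dimension $p$, and it suffices to establish surjectivity.

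Next, I would prove surjectivity using diagram~\eqref{eq: diagram mjI}. Given a target element $\parentheses*{\alpha_I}_{I \in \cI(\ux)} \in \prod_{I \in \cI(\ux)} \cMJ_I(\R^n)_z$, lift each $\alpha_I$ to some $P_I \in \R_{\norm{I}-1}[X]$ via the canonical surjection $\R_{\norm{I}-1}[X] \twoheadrightarrow \cMJ_I(\R^n)_z = \R_{\norm{I}-1}[X]/\cG_I(z)$. By Lemma~\ref{lem: compatibility Kergin interpolation}, the linear map $\parentheses*{K(\cdot,\ux_I)}_{I \in \cI(\ux)} : \R_{p-1}[X] \to \prod_{I \in \cI(\ux)} \R_{\norm{I}-1}[X]$ is surjective, so there exists $P \in \R_{p-1}[X]$ with $K(P,\ux_I) = P_I$ for every $I \in \cI(\ux)$. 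Reducing modulo $\cG(z)$ on the left and modulo $\cG_I(z)$ on the right in diagram~\eqref{eq: diagram mjI}, we obtain $\mj_I(P \mod \cG(z),z) = P_I \mod \cG_I(z) = \alpha_I$ for all $I \in \cI(\ux)$, which is the desired preimage.

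Combining surjectivity with the equality of dimensions yields bijectivity, hence $\mj_{\cI(\ux)}(\cdot,z)$ is a linear isomorphism. I do not expect any serious obstacle: the only nontrivial ingredient is the Kergin compatibility from Lemma~\ref{lem: compatibility Kergin interpolation}, which has already been proved in Section~\ref{sec: Divided differences and Kergin interpolation}. The dimension matching is the reason why the clustering partition $\cI(\ux)$ is the correct one to consider: for a finer or coarser partition the dimensions would not match and one should not expect an isomorphism.
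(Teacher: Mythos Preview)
Your proposal is correct and follows essentially the same approach as the paper: a dimension count reduces the claim to surjectivity, and surjectivity is obtained by lifting each $\alpha_I$ to $P_I \in \R_{\norm{I}-1}[X]$, applying Lemma~\ref{lem: compatibility Kergin interpolation} to find $P \in \R_{p-1}[X]$ with $K(P,\ux_I)=P_I$, and then projecting back down using diagram~\eqref{eq: diagram mjI}.
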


\begin{proof}
The map $\mj_{\cI(\ux)}(\cdot,z)$ is linear between two spaces of the same dimension $p=\sum_{I \in \cI(\ux)} \norm{I}$, so it is enough to prove its surjectivity. Let $\parentheses*{\alpha_I}_{I \in \cI(\ux)} \in \prod_{I \in \cI(\ux)} \cMJ_I(\R^n)_z$, for each $I \in \cI(\ux)$ there exists $P_I \in \R_{\norm{I}-1}[X]$ such that $\alpha_I = P_I \mod \cG_I(z)$. By Lemma~\ref{lem: compatibility Kergin interpolation}, there exists $P \in \R_{p-1}[X]$ such that $K(P,\ux_I) = P_I$ for all $I \in \cI(\ux)$. Let $\alpha=P \mod \cG(z) \in \cMJ_p(\R^n)_z$. Then, for all $I \in \cI(\ux)$, we have:
\begin{equation*}
\mj_I\parentheses*{\alpha,z} = \mj_I(P,z) = K(P,\ux_I) \mod \cG_I(z)= P_I \mod \cG_I(z) = \alpha_I.
\end{equation*}
Hence $\mj_{\cI(\ux)}(\alpha,z) = \parentheses*{\alpha_I}_{I \in \cI(\ux)}$, and $\mj_{\cI(\ux)}(\cdot,z)$ is indeed surjective.
\end{proof}

Let $k \in \N$ and let $x \in \R^n$. By definition, two maps $f$ and $g \in \cC^k(\R^n)$ have the same $k$-jet at $x$ if and only if they have the same Taylor polynomial of order $k$ at $x$. Let $\ux =(x,\dots,x)$, recalling Example~\ref{ex: Kergin polynomial}, the linear map $K\parentheses*{\cdot,\ux}:\cC^k(\R^n)\to \R_k[X]$ is surjective, and it induces an isomorphism $\cJ_k(\R^n)_x \simeq \R_k[X]$.

Let $z \in C_p[\R^n]$, let $\ux = \pi(z)$, let $\cI =\cI(\ux)$ and let $\parentheses*{y_I}_{I \in \cI}=\iota_{\cI}^{-1}(\ux)$; see Definitions~\ref{def: clustering partition} and~\ref{def: diagonal inclusions}. For all $I \in \cI$, the canonical isomorphism $\cJ_{\norm{I}-1}(\R^n)_{y_I} \simeq \R_{\norm{I}-1}[X]$ allows us to see the projection from $\R_{\norm{I}-1}[X]$ onto $\cMJ_I(\R^n)_z$ as a canonical linear surjection $\varpi_{z,I}:\cJ_{\norm{I}-1}(\R^n)_{y_I} \to \cMJ_I(\R^n)_z$.

\begin{dfn}[Gluing map]
\label{def: gluing map}
Let $z \in C_p[\R^n]$, let $\ux = \pi(z)$ and let $\parentheses*{y_I}_{I \in \cI}=\iota_{\cI}^{-1}(\ux)$, where $\cI =\cI(\ux)$. We define $\varpi_z:\parentheses*{\alpha_I}_{I \in \cI} \mapsto \parentheses*{\varpi_{z,I}(\alpha_I)}_{I \in \cI}$ from $\prod_{I \in \cI} \cJ_{\norm{I}-1}(\R^n)_{y_I}$ to $\prod_{I \in \cI} \cMJ_I(\R^n)_z$. We also define $\Theta_z=\mj_{\cI}(\cdot,z)^{-1} \circ \varpi_z$.
\end{dfn}

We can now check that $\Theta_z$ satisfies Condition~\ref{item: thm multijet localness} in Theorem~\ref{thm: multijet bundle}.

\begin{lem}[Localness of multijets]
\label{lem: localness of multijets}
For all $z \in C_p[\R^n]$, the map $\Theta_z$ is a linear surjection from $\prod_{I \in \cI} \cJ_{\norm{I}-1}(\R^n)_{y_I}$ to $\cMJ_p(\R^n)_z$. Moreover, it is the only map such that
\begin{equation*}
\forall f \in \cC^{p-1}(\R^n), \qquad \Theta_z\parentheses*{\parentheses*{\jet_{\norm{I}-1}(f,y_I)}_{I \in \cI}} = \mj_p(f,z).
\end{equation*}
\end{lem}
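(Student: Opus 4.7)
The plan is to prove each claim by chasing the definitions, using the already-established results of this section. Linearity of $\Theta_z$ is automatic from its definition: $\varpi_z$ is linear as a product of the canonical quotient maps $\varpi_{z,I}$, and $\mj_\cI(\cdot,z)^{-1}$ is linear by Lemma~\ref{lem: splitting of multijets}. Surjectivity follows by the same reasoning: each $\varpi_{z,I}:\cJ_{\norm{I}-1}(\R^n)_{y_I}\simeq \R_{\norm{I}-1}[X]\to \cMJ_I(\R^n)_z$ is the canonical projection modulo $\cG_I(z)$, hence surjective, so $\varpi_z$ is surjective, and $\mj_\cI(\cdot,z)^{-1}$ is an isomorphism by Lemma~\ref{lem: splitting of multijets}.

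The key step is the identity $\Theta_z\parentheses*{\parentheses*{\jet_{\norm{I}-1}(f,y_I)}_{I\in\cI}}=\mj_p(f,z)$. I would observe that, for any $I\in\cI$, the tuple $\ux_I = (x_a)_{a\in I}$ is the constant tuple $(y_I,\dots,y_I)$ because every $a\in I$ lies in the cell of $\cI(\ux)$ selected by $y_I$. Recalling Example~\ref{ex: Kergin polynomial}, $K(f,\ux_I)$ is then the Taylor polynomial of $f$ of order $\norm{I}-1$ at $y_I$, which is exactly the representative of $\jet_{\norm{I}-1}(f,y_I)$ under the canonical identification $\cJ_{\norm{I}-1}(\R^n)_{y_I}\simeq \R_{\norm{I}-1}[X]$. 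Therefore
\begin{equation*}
\varpi_{z,I}\parentheses*{\jet_{\norm{I}-1}(f,y_I)} = K(f,\ux_I) \mod \cG_I(z) = \mj_I(f,z).
\end{equation*}
From the commutative diagram~\eqref{eq: diagram mjI}, $\mj_I(f,z)=\mj_I(\mj_p(f,z),z)$, so
\begin{equation*}
\varpi_z\parentheses*{\parentheses*{\jet_{\norm{I}-1}(f,y_I)}_{I\in\cI}} = \parentheses*{\mj_I(\mj_p(f,z),z)}_{I\in\cI} = \mj_\cI(\mj_p(f,z),z),
\end{equation*}
and applying $\mj_\cI(\cdot,z)^{-1}$ yields the desired identity.

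For uniqueness, I would use the standard fact that the evaluation of jets at finitely many distinct points is surjective: the linear map $f\mapsto \parentheses*{\jet_{\norm{I}-1}(f,y_I)}_{I\in\cI}$ from $\cC^{p-1}(\R^n)$ to $\prod_{I\in\cI}\cJ_{\norm{I}-1}(\R^n)_{y_I}$ is surjective, since the points $(y_I)_{I\in\cI}$ are pairwise distinct and one can glue prescribed Taylor polynomials at each $y_I$ using smooth bump functions with pairwise disjoint supports centered at the $y_I$ (exactly as in the proof of Lemma~\ref{lem: compatibility Kergin interpolation}). Thus every element of $\prod_{I\in\cI}\cJ_{\norm{I}-1}(\R^n)_{y_I}$ is of the form $\parentheses*{\jet_{\norm{I}-1}(f,y_I)}_{I\in\cI}$ for some $f\in\cC^{p-1}(\R^n)$, and any map satisfying the prescribed formula is forced to coincide with $\Theta_z$. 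I expect no real obstacle: the only subtle point is correctly identifying $K(f,\ux_I)$ with the Taylor polynomial at $y_I$ when $\ux_I$ is constant, which is immediate from Example~\ref{ex: Kergin polynomial}.
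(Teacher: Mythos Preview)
Your proof is correct and follows essentially the same route as the paper's: linearity and surjectivity of $\Theta_z$ from the definition and Lemma~\ref{lem: splitting of multijets}, the identification of $K(f,\ux_I)$ with the Taylor polynomial representing $\jet_{\norm{I}-1}(f,y_I)$, and uniqueness via surjectivity of the jet-evaluation map at pairwise distinct points. The only cosmetic difference is that you invoke the diagram~\eqref{eq: diagram mjI} to get $\mj_I(f,z)=\mj_I(\mj_p(f,z),z)$ for general $f$, which strictly speaking requires the extra observation $K(K(f,\ux),\ux_I)=K(f,\ux_I)$ from Remark~\ref{rem: Kergin isomorphism}; the paper makes the same implicit step, so this is not a gap.
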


\begin{proof}
With the same notation as in Definition~\ref{def: gluing map}, for all $I \in \cI$ the map $\varpi_{z,I}$ is a linear surjection by definition. Hence so is $\varpi_z$. Since $\mj_\cI(\cdot,z)$ is an isomorphism by Lemma~\ref{lem: splitting of multijets}, the map $\Theta_z=\mj_{\cI}(\cdot,z)^{-1} \circ \varpi_z$ is also a linear surjection.

Let $f \in \cC^{p-1}(\R^n)$. For all $I \in \cI$, the image of $\jet_{\norm{I}-1}(f,y_I)$ under the canonical isomorphism $\cJ_{\norm{I}-1}(\R^n)_{y_I} \simeq \R_{\norm{I}-1}[X]$ is the Taylor polynomial $K(f,\ux_I)$. Hence
\begin{equation*}
\varpi_{z,I}\parentheses*{\jet_{\norm{I}-1}(f,y_I)}= K(f,\ux_I) \mod \cG_I(z) = \mj_I\parentheses*{K(f,\ux_I),z}.
\end{equation*}
Thus, we have $\varpi_z\parentheses*{\parentheses*{\jet_{\norm{I}-1}(f,y_I)}_{I \in \cI}} = \parentheses*{\strut \mj_I\parentheses*{K(f,\ux_I),z}}_{I \in \cI} = \mj_\cI\parentheses*{\mj_p(f,z),z}$, and finally $\Theta_z\parentheses*{\parentheses*{\jet_{\norm{I}-1}(f,y_I)}_{I \in \cI}} = \mj_p(f,z)$.

Since the $y_I$ are pairwise distinct, every element of $\prod_{I \in \cI} \cJ_{\norm{I}-1}(\R^n)_{y_I}$ can be realized as $\parentheses*{\jet_{\norm{I}-1}(f,y_I)}_{I \in \cI}$ for some $f \in \cC^\infty(\R^n)$. Hence the previous relation completely defines $\Theta_z$.
\end{proof}


\subsection{Multijets of vector valued maps}
\label{subsec: Multijets of vector valued maps}

So far we have only defined multijets of real-valued functions. In this section, we extend the previous construction to maps from $\R^n$ to some vector space $V$ of dimension $r \geq 1$. Let $\pi:C_p[\R^n] \to (\R^n)^p$ be given by Corollary~\ref{cor: existence of the basis CpRn} as before. We define $\cMJ_p(\R^n,V)\to C_p[\R^n]$ as the tensor product of the bundle $\cMJ_p(\R^n)\to C_p[\R^n]$ from Definition~\ref{def: vector bundle of multijets} with the trivial bundle $V \times C_p[\R^n] \to C_p[\R^n]$.

\begin{dfn}[Multijet bundle of vector valued maps]
\label{def: multijet bundle of vector valued maps}
Let $n \geq 1$ and $p\geq 1$; let $V$ be a real vector space of dimension $r \geq 1$, we define the \emph{bundle of $p$-multijets of $V$-valued maps on $\R^n$} as the following smooth bundle of rank $pr$ over $C_p[\R^n]$:
\begin{equation*}
\cMJ_p(\R^n,V) = \cMJ_p(\R^n) \otimes V.
\end{equation*}
\end{dfn}

\begin{dfn}[Multijet of a map]
\label{def: multijet of a map}
Let $(v_1,\dots,v_r)$ denote a basis of $V$. Let $z \in C_p[\R^n]$ and let $f = \sum_{i=1}^r f_i v_i \in \cC^{p-1}(\R^n,V)$, we define by $\mj_p(f,z) = \sum_{i=1}^r \mj_p(f_i,z) \otimes v_i \in \cMJ_p(\R^n,V)_z$ the $p$-multijet of $f$ at $z$.
\end{dfn}

\begin{lem}[Independence from the basis]
\label{lem: multijet intrinsic}
In Definition~\ref{def: multijet of a map}, the vector $\mj_p(f,z)$ does not depend on the choice of the basis $(v_1,\dots,v_r)$.
\end{lem}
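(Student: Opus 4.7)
The plan is a direct verification by change of basis, relying essentially on the linearity of $\mj_p(\cdot,z)$ established in Lemma~\ref{lem: regularity of multijets}. Let $(w_1,\dots,w_r)$ be another basis of $V$ and let $A=(a_{ij}) \in GL_r(\R)$ be the transition matrix defined by $w_j = \sum_{i=1}^r a_{ij} v_i$. Decomposing $f$ in both bases, $f = \sum_i f_i v_i = \sum_j g_j w_j$, substituting the $w_j$ and identifying coefficients in the basis $(v_i)$ gives $f_i = \sum_j a_{ij} g_j$. Applying the linear map $\mj_p(\cdot, z)$ yields $\mj_p(f_i, z) = \sum_j a_{ij} \mj_p(g_j, z)$, so that, plugging this into the right-hand side of Definition~\ref{def: multijet of a map} associated with the basis $(v_i)$ and swapping the order of summation, one obtains
\begin{equation*}
\sum_{i=1}^r \mj_p(f_i, z) \otimes v_i = \sum_{j=1}^r \mj_p(g_j, z) \otimes \parentheses*{\sum_{i=1}^r a_{ij} v_i} = \sum_{j=1}^r \mj_p(g_j, z) \otimes w_j,
\end{equation*}
which is precisely the expression of $\mj_p(f, z)$ defined using the basis $(w_j)$.

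A conceptually cleaner way to see this is to note that, since $V$ is finite-dimensional, the map $\sum_i \phi_i \otimes v_i \mapsto \parentheses*{x \mapsto \sum_i \phi_i(x) v_i}$ is a canonical, i.e., basis-free, isomorphism $\cC^{p-1}(\R^n) \otimes V \simeq \cC^{p-1}(\R^n,V)$. Under this identification, the element $\mj_p(f, z)$ of Definition~\ref{def: multijet of a map} is precisely the image of $f$ under the tensor product map $\mj_p(\cdot, z) \otimes \Id_V : \cC^{p-1}(\R^n) \otimes V \to \cMJ_p(\R^n)_z \otimes V = \cMJ_p(\R^n, V)_z$. Since the latter construction does not involve any choice of basis, the independence from $(v_1, \dots, v_r)$ is automatic.

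I do not expect any serious obstacle: the lemma is essentially a bookkeeping statement, whose genuine content is the $\R$-linearity of $\mj_p(\cdot, z)$ on scalar functions. Either the short explicit check above or the intrinsic reformulation suffices.
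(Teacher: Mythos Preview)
Your proof is correct and takes essentially the same approach as the paper: a direct change-of-basis computation using the linearity of $\mj_p(\cdot,z)$, differing only in which basis you express in terms of the other. Your additional intrinsic reformulation via $\mj_p(\cdot,z)\otimes\Id_V$ is not in the paper but is a welcome clarification.
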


\begin{proof}
Let $(w_1,\dots,w_r)$ be another basis of $V$. There exists a matrix $\parentheses*{a_{ij}}_{1 \leq i,j \leq r} \in GL_r(\R)$ such that $v_i = \sum_{j=1}^r a_{ij} w_j$ for all $i \in \ssquarebrackets{1}{r}$. Letting $g_j = \sum_{i=1}^r a_{ij}f_i$ for all $j \in \ssquarebrackets{1}{r}$, we get
\begin{equation*}
f= \sum_{i=1}^r f_i v_i = \sum_{1 \leq i,j \leq r} a_{ij}f_i w_j = \sum_{j=1}^r g_j w_j.
\end{equation*}
Then, by linearity of the $p$-multijet for functions, we have:
\begin{equation*}
\sum_{j=1}^r \mj_p(g_j,z) \otimes w_j = \sum_{1 \leq i,j \leq r} a_{ij} \mj_p(f_i,z) \otimes w_j = \sum_{i=1}^r \mj_p(f_i,z) \otimes v_i.\qedhere
\end{equation*}
\end{proof}

\begin{ex}
\label{ex: multijet Rr}
If $V=\R^r$, then for all $z \in C_p[\R^n]$ and $f=(f_1,\dots,f_r) \in \cC^{p-1}(\R^n,\R^r)$ we have
\begin{equation*}
\cMJ_p(\R^n,\R^r)_z = \cMJ_p(\R^n)_z \otimes \R^r \simeq \parentheses*{\cMJ_p(\R^n)_z}^r,
\end{equation*}
and under this canonical isomorphism $\mj_p(f,z) = \parentheses*{\mj_p(f_i,z)}_{1 \leq i \leq r}$, as one would expect.
\end{ex}

Let $x \in \R^n$ and $k \in \N$, we have a canonical isomorphism $\cJ_k(\R^n,V)_x \simeq \cJ_k(\R^n)_x \otimes V$. If $(v_1,\dots,v_r)$ is a basis of $V$, this isomorphism is totally determined by the fact that the image of  $\jet_k(f,x)$ is $\sum_{i=1}^r \jet_k(f_i,x)\otimes v_i$ for all $f= \sum_{i=1}^r f_i v_i \in \cC^k(\R^n,V)$. As in the proof of Lemma~\ref{lem: multijet intrinsic}, this does not depend on the choice of the basis $(v_1,\dots,v_r)$.

\begin{dfn}[Gluing map for vector valued multijets]
\label{def: gluing map for vector valued multijets}
Let $z \in C_p[\R^n]$, let $\ux = \pi(z)$, let $\cI =\cI(\ux)$ and let $\parentheses*{y_I}_{I \in \cI}=\iota_{\cI}^{-1}(\ux)$. Using the previous canonical isomorphisms, we have:
\begin{equation*}
\prod_{I \in \cI} \cJ_{\norm{I}-1}(\R^n,V)_{y_I} = \prod_{I \in \cI} \cJ_{\norm{I}-1}(\R^n)_{y_I} \otimes V = \parentheses*{\prod_{I \in \cI} \cJ_{\norm{I}-1}(\R^n)_{y_I}} \otimes V.
\end{equation*}
Recalling Definition~\ref{def: gluing map}, we define a linear map $\Theta_z:\prod_{I \in \cI} \cJ_{\norm{I}-1}(\R^n,V)_{y_I} \to \cMJ_p(\R^n,V)_z$ by $\Theta_z\parentheses*{\alpha \otimes v} = \Theta_z(\alpha) \otimes v$ for all $\alpha \in \prod_{I \in \cI} \cJ_{\norm{I}-1}(\R^n)_{y_I}$ and all $v \in V$.
\end{dfn}

We now have everything we need to prove Theorem~\ref{thm: multijet bundle}.

\begin{proof}[Proof of Theorem~\ref{thm: multijet bundle}]
The base space $C_p[\R^n]$ and the projection $\pi$ are given by Corollary~\ref{cor: existence of the basis CpRn}. In particular, they satisfy Condition~\ref{item: thm multijet pi} in Theorem~\ref{thm: multijet bundle}. Definition~\ref{def: multijet of a map} and Lemma~\ref{lem: regularity of multijets} show that $\mj_p$ satisfies Condition~\ref{item: thm multijet mj}. Similarly, Condition~\ref{item: thm multijet ev} is satisfied thanks to Lemma~\ref{lem: multijets and evaluation} and the definition of $\mj_p$ for $V$-valued maps.

Let us check that the linear maps $\Theta_z$ from Definition~\ref{def: gluing map for vector valued multijets} satisfy Condition~\ref{item: thm multijet localness}. Let $z \in C_p[\R^n]$, let $\ux = \pi(z)$, let $\cI =\cI(\ux)$ and let $\parentheses*{y_I}_{I \in \cI}=\iota_{\cI}^{-1}(\ux)$. Let us also denote by $(v_1,\dots,v_r)$ a basis of~$V$. Let $\alpha \in \cMJ_p(\R^n,V)_z$, there exists $\alpha_1,\dots,\alpha_r \in \cMJ_p(\R^n)_z$ such that $\alpha = \sum_{i=1}^r \alpha_i \otimes v_i$. By Lemma~\ref{lem: localness of multijets}, for each $i \in \ssquarebrackets{1}{r}$, there exists $\beta_i \in \prod_{I \in \cI} \cJ_{\norm{I}-1}(\R^n)_{y_I}$ such that $\alpha_i = \Theta_z(\beta_i)$. Hence, $\Theta_z \parentheses*{\sum_{i=1}^r \beta_i \otimes v_i} = \sum_{i=1}^r \Theta_z(\beta) \otimes v_i = \alpha$, and $\Theta_z$ is indeed surjective.

Finally, let us consider $f=\sum_{i=1}^r f_i v_i \in \cC^{p-1}(\R^n,V)$. Then, by Lemma~\ref{lem: localness of multijets} once again,
\begin{align*}
\Theta_z\parentheses*{\parentheses*{\jet_{\norm{I}-1}(f,y_I)}_{I \in \cI}} &= \Theta_z\parentheses*{\sum_{i=1}^r \parentheses*{\jet_{\norm{I}-1}(f_i,y_I)}_{I \in \cI} \otimes v_i} = \sum_{i=1}^r \Theta_z\parentheses*{\parentheses*{\jet_{\norm{I}-1}(f_i,y_I)}_{I \in \cI}} \otimes v_i\\
&= \sum_{i=1}^r \mj_p(f_i,z) \otimes v_i = \mj_p(f,z).\qedhere
\end{align*}
\end{proof}

\begin{rem}
\label{rem: typography}
Another way to define $\cMJ_p(\R^n,V)$ and $\mj_p$ is the following. If $f:\R^n \to V$ is regular enough, then the divided differences from Definition~\ref{def: divided differences} still make sense, only this time $f[x_0,\dots,x_k] \in \sym^k(\R^n) \otimes V$. Then, one can still define $K(f,\ux)$ as in Proposition~\ref{prop: Kergin interpolation}, and it defines an element of $\R_{p-1}[X] \otimes V$ that interpolates the divided differences of $f$. Similarly, everything we did from Section~\ref{subsec: Divided differences} to Section~\ref{subsec: Localness of multijets} can be adapted to the case of $V$-valued maps, simply by tensoring each vector space by $V$, and each linear map by $\Id_V$. One can check that we recover the same objects as in Definitions~\ref{def: multijet bundle of vector valued maps} and~\ref{def: multijet of a map}, up to canonical isomorphisms.
\end{rem}


\section{Application to zeros of Gaussian fields}
\label{sec: Application to zeros of Gaussian fields}

This section is concerned with our application of multijet bundles to Gaussian fields. In Section~\ref{subsec: Gaussian vectors and Gaussian sections}, we describe the local model for the Gaussian fields with values in a vector bundle that we consider. In Section~\ref{subsec: Bulinskaya lemma and Kac--Rice formula for the expectation}, we prove a Bulinskaya-type lemma and a Kac--Rice formula for the zeros of these fields. Section~\ref{subsec: Factorial moment measures and Kac--Rice densities} is dedicated to the definition of the Kac--Rice densities of order larger than $2$. We also relate the properties of these functions with the moments of the linear statistics associated with our field. Finally, we prove Theorems~\ref{thm: moments Rn} and~\ref{thm: moments M} in Section~\ref{subsec: Proof of finiteness of moments}, using the multijet bundles defined in Theorem~\ref{thm: multijet bundle}.


\subsection{Gaussian vectors and Gaussian sections}
\label{subsec: Gaussian vectors and Gaussian sections}

In this section, we briefly recall some notation and conventions concerning Gaussian vectors. Then we describe the local model for Gaussian fields with values in a vector bundle.  We will mostly consider centered random vectors in finite-dimensional vector spaces, so we restrict ourselves to this setting. In the following, $V$ is a finite-dimensional real vector space.

\begin{dfn}[Gaussian vector]
\label{def: Gaussian vector}
We say that a random vector $X$ with values in $V$ is a \emph{centered Gaussian vector} if, for all $\eta \in V^*$, the real random variable $\eta(X)$ is a centered Gaussian in $\R$.
\end{dfn}

In particular, a centered Gaussian vector in $V$ has finite moments up to any order. Let us assume that $V$ is endowed with an inner product $\prsc{\cdot}{\cdot}$. Then for all $v \in V$, we define $v^* = \prsc{v}{\cdot} \in V^*$.

\begin{dfn}[Variance operator]
\label{def: variance operator}
Let $X$ be a centered Gaussian vector in $\parentheses*{V,\prsc{\cdot}{\cdot}}$, then its \emph{variance operator} is the non-negative self-adjoint endomorphism $\var{X} = \esp{X \otimes X^*}$ of~$V$. We say that $X$ is \emph{non-degenerate} if $\var{X}$ is invertible.
\end{dfn}

Recall that a centered Gaussian vector in $\parentheses*{V,\prsc{\cdot}{\cdot}}$ is completely determined by its variance. In the following, we denote by $\gauss{\Lambda}$ the centered Gaussian distribution of variance $\Lambda$, and by $X \sim \gauss{\Lambda}$ the fact that $X$ follows this distribution.

\begin{dfn}[Gaussian field]
\label{def: Gaussian field}
Let $E \to M$ be a vector bundle over some manifold $M$, we say that a random section $s$ of $E \to M$ is a \emph{centered Gaussian field} if for all $m \geq 1$ and all $x_1,\dots,x_m$ the random vector $\parentheses{s(x_1),\dots,s(x_m)}$ is a centered Gaussian. We say that this field is \emph{non-degenerate} if $s(x)$ is non-degenerate for all $x \in M$.
\end{dfn}

If the centered Gaussian field $s$ is $\cC^p$, then its jet $\jet_k(s,x)$ is a centered Gaussian for all $x \in M$. Thus, the definition of $p$-non-degeneracy of the field makes sense; see~Definition~\ref{def: p non degeneracy sections}. Note that $0$-non-degenerate simply means non-degenerate.

Since this will appear in several places later on, let us describe the local model for Gaussian fields in this context. Let $x_0 \in M$, there exists a chart $(U,\varphi)$ of $M$ around $x_0$. That is, $\varphi:U \to \Omega$ is a diffeomorphism between an open neighborhood $U$ of $x_0$ and an open subset $\Omega \subset \R^n$. Up to reducing $U$, we can assume that $E$ is trivial over $U$, i.e.,~there exists a trivialization $\tau:E_{\vert U} \to \R^r \times U$. Letting $\tau_\varphi = (\Id,\varphi)\circ \tau$, we have the following commutative diagram, where arrows on the top row are bundle maps covering the maps on the bottom row.

\begin{equation}
\label{eq: diagram trivialization}
\begin{tikzcd}[column sep=huge]
E_{\vert U}
	\arrow{r}{\tau}
	\arrow[out=30, in=150, swap]{rr}{\tau_\varphi}
	\arrow{d}&
\R^r \times U
	\arrow{r}{(\Id,\varphi)}
	\arrow{d}&
\R^r \times \Omega
	\arrow{d}\\
U
	\arrow{r}{\Id}&
U
	\arrow{r}{\varphi}&
\Omega
\end{tikzcd}
\end{equation}

Let $s$ be a local section of $E_{\vert U}$, then $\tau_\varphi \circ s \circ \varphi^{-1}$ is a section of the trivial bundle on the right-hand side of~\eqref{eq: diagram trivialization}. Hence there exists a map $f:\Omega \to \R^r$ such that $\tau_\varphi \circ s \circ \varphi^{-1}=\parentheses*{f,\Id}$. For all $x \in \Omega$, the vector $f(x)$ is the image of $s(\varphi^{-1}(x))$ by a linear bijection. Thus, if $s:M \to E$ is a centered Gaussian field, its restriction to $U$ corresponds to a centered Gaussian field $f:\Omega \to \R^r$. Moreover, $f$ has the same regularity as $s$.

The local trivializations on the diagram~\eqref{eq: diagram trivialization} induce a similar picture for jet bundles so that we have a local trivialization $\cJ_p(U,E_{\vert U}) \simeq \cJ_p(\Omega,\R^r)$, under which $\jet_p(f,x)$ corresponds to $\jet_p(s,\varphi^{-1}(x))$. Thus, the Gaussian section $s$ is $p$-non-degenerate in the sense of Definition~\ref{def: p non degeneracy sections} if and only if $f$ is $p$-non-degenerate in the sense of Definition~\ref{def: p non degeneracy}. If this is the case, up to replacing $\Omega$ by a smaller $\Omega'$ such that $\bar{\Omega'} \subset \Omega$ is compact, we can assume that $f$ is uniformly $p$-non-degenerate, in the sense that $\det\var{\jet_p(f,x)}$ is bounded from below on $\Omega$. This local picture is summarized in the following lemma.

\begin{lem}[Local model for Gaussian fields]
\label{lem: local picture}
Let $s:M\to E$ be a centered $p$-non-degenerate Gaussian field. For all $x_0 \in M$, there exist an open neighborhood $U$ of $x_0$ and a local trivialization of the form~\eqref{eq: diagram trivialization} such that $s$ reads in local coordinates as a centered Gaussian field $f:\Omega \to \R^r$ of the same regularity as~$s$ and which is uniformly $p$-non-degenerate on $\Omega$.
\end{lem}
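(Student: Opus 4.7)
The plan is to invoke the local picture already described in the paragraph preceding the lemma and then shrink the neighborhood to achieve uniformity. First, I would choose any open neighborhood $U_0$ of $x_0$ that is simultaneously the domain of a chart $\varphi_0 : U_0 \to \Omega_0$ of $M$ and a trivializing open set for $E \to M$; such $U_0$ exists as the intersection of the domain of a chart around $x_0$ and that of a local trivialization. The discussion preceding the lemma, based on the commutative diagram~\eqref{eq: diagram trivialization}, then produces a centered Gaussian field $f_0 : \Omega_0 \to \R^r$ of the same regularity as $s$ that is pointwise $p$-non-degenerate on $\Omega_0$.

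The remaining task is to upgrade this pointwise $p$-non-degeneracy to a uniform one on some smaller open set $\Omega$ with $\bar\Omega \subset \Omega_0$ compact. For this, I would argue that the map $x \mapsto \det \var{\jet_p(f_0,x)}$ is continuous on $\Omega_0$. Since this function is positive everywhere by the $p$-non-degeneracy of $f_0$, any relatively compact open neighborhood $\Omega$ of $\varphi_0(x_0)$ contained in $\Omega_0$ will do: by continuity and compactness of $\bar\Omega$, the determinant is bounded below by a positive constant on $\Omega$. Setting $U = \varphi_0^{-1}(\Omega)$ and restricting $\varphi_0$, $\tau_0$ and $f_0$ accordingly then yields the desired local model.

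The only slightly delicate step is the continuity of $x \mapsto \var{\jet_p(f_0,x)}$. I would justify it by the classical fact that the covariance function $K:(x,y) \mapsto \esp{f_0(x) \otimes f_0(y)^*}$ of a $\cC^p$ centered Gaussian field is jointly of class $\cC^{p,p}$ on $\Omega_0 \times \Omega_0$, which follows from commuting expectation with partial derivatives of the sample paths together with the finiteness of all Gaussian moments. One can then write $\var{\jet_p(f_0,x)}$ in coordinates as an explicit linear combination of mixed partial derivatives $\partial_x^\alpha \partial_y^\beta K$ of order at most $p$ in each variable evaluated at $(x,x)$, each of which depends continuously on $x$. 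The rest of the argument is bookkeeping with the diagram~\eqref{eq: diagram trivialization}.
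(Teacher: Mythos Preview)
Your proposal is correct and follows essentially the same approach as the paper: the paper's ``proof'' is the paragraph immediately preceding the lemma, which chooses a chart and trivialization, observes that $p$-non-degeneracy transfers to the local field $f$, and then shrinks $\Omega$ to a relatively compact $\Omega'$ to obtain a uniform lower bound on $\det\var{\jet_p(f,x)}$. Your additional justification of the continuity of $x \mapsto \var{\jet_p(f_0,x)}$ via the $\cC^{p,p}$ regularity of the covariance kernel fills in a detail the paper leaves implicit, but otherwise the arguments are the same.
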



\subsection{Bulinskaya lemma and Kac--Rice formula for the expectation}
\label{subsec: Bulinskaya lemma and Kac--Rice formula for the expectation}

Let $(M,g)$ be a Riemannian manifold of dimension $n\geq 1$ without boundary and let $E \to M$ be a vector bundle of rank $r \in \ssquarebrackets{1}{n}$. We consider a non-degenerate centered Gaussian field $s:M\to E$, in the sense of Definition~\ref{def: Gaussian field}. The goal of this section is to state a Bulinskaya-type lemma and a Kac--Rice formula for the expectation of the linear statistics of~$s$.

When $M$ is an open subset of $\R^n$ and $E=\R^r \times M$ is trivial, these results are proved by Armentano--Azaïs--Leòn in~\cite[Prop.~2.1 and Thm.~2.2]{AAL2023}. They are extended to fields on submanifolds of $\R^N$ in~\cite[Sect.~9.1]{AAL2023}. In the following we check the results of~\cite{AAL2023} can be adapted to the case of Gaussian sections.

\begin{rem}
\label{rem: complicated setting}
Some readers are most interested in the zeros of Gaussian fields from $\R^n$ to $\R^r$ and the present geometric setting may seem overly complicated to them. Let us stress that, even in the simpler setting of fields from $\R^n$ to $\R^r$, our proof of Theorem~\ref{thm: moments Rn} uses the Kac--Rice formula in the more general setting we are studying here.
\end{rem}

In order to state the Bulinskaya lemma and the Kac--Rice formula, we need the following.

\begin{dfn}[Jacobian determinant]
\label{def: Jacobian}
Let $L:V \to V'$ be a linear map between Euclidean spaces and let $L^*$ denote its adjoint map. The \emph{Jacobian} of $L$ is defined as $\jac{L}=\det\parentheses{LL^*}^\frac{1}{2}$.
\end{dfn}

\begin{rem}
\label{rem: Jacobian}
We have $\jac{L} \geq 0$, and $\jac{L}>0$ if and only if $L$ is surjective. In particular, the fact that $\jac{L}=0$ depends only on $L$ and not on the Euclidean structures on $V$ and $V'$. Thus the condition $\jac{L}=0$ makes sense even if no inner product is specified.
\end{rem}

\begin{prop}[Weak Bulinskaya lemma]
\label{prop: weak Bulinskaya}
Let $\nabla$ be a connection on $E \to M$. If the centered Gaussian field $s:M\to E$ is $\cC^1$ and non-degenerate, the $(n-r)$-dimensional Hausdorff measure of $\brackets*{x \in M \mvert s(x)=0 \ \text{and} \ \jac{\nabla_xs}=0}$ is almost surely $0$.
\end{prop}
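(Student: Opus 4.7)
The strategy is to reduce the statement to the Euclidean case of a non-degenerate Gaussian field $f:\Omega \to \R^r$, for which the analogous statement is known (see \cite[Prop.~2.1]{AAL2023}). Since $M$ is second-countable, I cover it by a countable family of charts $(U_\alpha,\varphi_\alpha)$ over which $E$ is trivializable and Lemma~\ref{lem: local picture} applies, with $\varphi_\alpha(U_\alpha)=\Omega_\alpha \subset \R^n$. By countable subadditivity of the Hausdorff measure, it suffices to prove that for each $\alpha$ the set $B_\alpha = \brackets*{x \in U_\alpha \mvert s(x)=0 \ \text{and} \ \jac{\nabla_xs}=0}$ has vanishing $(n-r)$-dimensional Hausdorff measure almost surely.

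Fix such a chart. In the local trivialization $\tau_{\varphi_\alpha}$ of diagram~\eqref{eq: diagram trivialization}, the field $s$ reads as a $\cC^1$ non-degenerate centered Gaussian field $f:\Omega_\alpha \to \R^r$. The key observation is that, on the zero set of $s$, the covariant derivative $\nabla_x s$ agrees (under the trivialization) with the ordinary differential $D_y f$ at the corresponding point $y=\varphi_\alpha(x)$: writing $s = \sum_i f_i e_i$ in the local frame associated with $\tau_{\varphi_\alpha}$, one has $\nabla_x s = \sum_i (D_xf_i) e_i(x) + \sum_i f_i(x) \nabla_x e_i$, and the second sum vanishes whenever $s(x)=0$. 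Since the vanishing of the Jacobian only records non-surjectivity and is therefore metric-independent (see Remark~\ref{rem: Jacobian}), we get that $\varphi_\alpha(B_\alpha)$ coincides with $\tilde B_\alpha = \brackets*{y \in \Omega_\alpha \mvert f(y)=0 \ \text{and} \ \jac{D_yf}=0}$.

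The field $f$ is a $\cC^1$ non-degenerate centered Gaussian field on an open subset of $\R^n$, so the classical Bulinskaya lemma in the form of \cite[Prop.~2.1]{AAL2023} applies and yields that $\tilde B_\alpha$ has vanishing Euclidean $(n-r)$-dimensional Hausdorff measure almost surely. Finally, I transfer this null property back to $M$: the diffeomorphism $\varphi_\alpha$ is locally bi-Lipschitz between $(U_\alpha,g)$ and $(\Omega_\alpha,\text{Eucl.})$, so by exhausting $U_\alpha$ with a countable increasing family of relatively compact subsets, the Riemannian $(n-r)$-Hausdorff measure of $B_\alpha$ is also zero almost surely.

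The main technical obstacle is not the Gaussian input, which is entirely supplied by~\cite{AAL2023}, but rather checking that the naive identification between $\nabla s$ and $Df$ on the zero set is indeed correct and compatible with the Jacobian condition; once that is cleanly set up, the proof becomes a covering argument combined with the metric-independence of non-surjectivity.
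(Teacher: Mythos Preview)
Your proof is correct and follows essentially the same approach as the paper: reduce to countably many local trivializations via Lemma~\ref{lem: local picture}, observe that on the zero set the covariant derivative reads as the ordinary differential $D_xf$ independently of~$\nabla$, and invoke \cite[Prop.~2.1]{AAL2023}. Your write-up is in fact slightly more explicit than the paper's (the computation of $\nabla_x s$ in the local frame and the bi-Lipschitz transfer of Hausdorff nullity), but the argument is the same.
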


\begin{rem}
\label{rem: Bulinskaya nabla}
If $s(x)=0$ then $\nabla_xs$ does not depend on $\nabla$. Hence the random set we are interested in Proposition~\ref{prop: weak Bulinskaya} does not depend on the choice $\nabla$.
\end{rem}

\begin{proof}[Proof of Proposition~\ref{prop: weak Bulinskaya}]
We can cover $M$ by countably many open trivialization domains of the type described in Lemma~\ref{lem: local picture}. Then it is enough to prove the result in each of these domains.

Let $U \subset M$ be as Lemma~\ref{lem: local picture}. In local coordinates, the restriction of $s$ reads as a uniformly non-degenerate $\cC^1$ centered Gaussian field $f:\Omega \to \R^r$, where $\Omega \subset \R^n$ is open. Moreover, for any $x \in \Omega$ such that $f(x)=0$, the covariant derivative of $s$ reads as $D_xf$, independently of the choice of $\nabla$. Thus, we are left with proving that the $(n-r)$-dimensional Hausdorff measure of
\begin{equation*}
\brackets*{x \in \Omega \mvert f(x)=0 \ \text{and}\ \jac{D_xf}=0}
\end{equation*}
is almost surely $0$, which is given by~\cite[Prop.~2.1]{AAL2023}.
\end{proof}

Let us assume from now on that the centered Gaussian field $s:M \to E$ is $\cC^1$ and non-degenerate. We denote its zero set by $Z = s^{-1}(0)$. Let us define $Z_\text{sing} = \brackets*{x \in Z \mvert \jac{\nabla_xs}=0}$ and $Z_\text{reg} = Z \setminus Z_\text{sing}$. By Proposition~\ref{prop: weak Bulinskaya}, the $(n-r)$-dimensional Hausdorff measure of the singular part $Z_\text{sing}$ is almost surely $0$. On the other hand, the regular part $Z_\text{reg}$ is the set of points where~$s$ vanishes transversally. As such, it is a (possibly empty) $\cC^1$ submanifold of~$M$ of codimension $r$ without boundary. Thus, $Z$ is almost surely the union of an open (in $Z$) regular part $Z_\text{reg}$ of dimension $n-r$, and a negligible singular part $Z_\text{sing}$ that we can think of as a set of lower dimension. Let us mention that, under additional assumptions on the field, the singular part is almost surely empty.

\begin{prop}[Strong Bulinskaya lemma]
\label{prop: strong Bulinskaya}
If the centered Gaussian field $s:M \to E$ is $\cC^2$ and $1$-non-degenerate, then $Z_\text{sing}=\emptyset$ almost surely.
\end{prop}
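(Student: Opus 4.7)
The plan is to recast $Z_{\mathrm{sing}}$ as the preimage, under the $1$-jet field of $s$, of a codimension-$(n+1)$ stratified subset of the jet space, and then apply a Bulinskaya-type argument based on $1$-non-degeneracy to rule out such preimages almost surely.

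First, I would reduce to the Euclidean setting via Lemma~\ref{lem: local picture}: cover $M$ by countably many chart domains $U$ on which $s$ pulls back to a uniformly $1$-non-degenerate $\cC^2$ Gaussian field $f:\Omega\to\R^r$, with $\Omega\subset\R^n$ relatively compact. Since a countable union of almost surely empty sets is almost surely empty, it suffices to show that almost surely $\brackets*{x\in\Omega \mvert f(x)=0 \text{ and } \jac{D_xf}=0}=\emptyset$.

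Next, I would introduce the $\cC^1$ Gaussian map $\Phi:\Omega\to W:=\R^r\oplus\mathrm{Hom}(\R^n,\R^r)$ defined by $\Phi(x)=\jet_1(f,x)=(f(x),D_xf)$, and stratify the target singular set $A=\brackets*{(y,L)\in W \mvert y=0 \text{ and } \jac{L}=0}$ by rank as $A=\bigsqcup_{k=0}^{r-1}A_k$ with $A_k=\brackets*{(0,L)\in W \mvert \mathrm{rank}(L)=k}$. The determinantal variety of $r\times n$ matrices of rank at most $k$ has codimension $(r-k)(n-k)$ in $\mathrm{Hom}(\R^n,\R^r)$, so each $A_k$ is a smooth algebraic submanifold of $W$ of codimension $r+(r-k)(n-k)\geq r+(n-r+1)=n+1$. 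The problem thus reduces to showing that $\Phi^{-1}(A_k)=\emptyset$ almost surely for each $k\in\ssquarebrackets{0}{r-1}$.

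Finally, I would establish this by a Bulinskaya-type estimate in the spirit of~\cite[Prop.~2.1]{AAL2023}. Uniform $1$-non-degeneracy of $f$ ensures that the density of $\Phi(x)$ on $W$ is bounded uniformly in $x\in\bar{\Omega}$. Covering $A_k$ by countably many charts in which $A_k=h^{-1}(0)$ for a submersion $h$ with $c_k\geq n+1$ independent components, and covering any compact $K\subset\Omega$ by $O(\epsilon^{-n})$ balls of radius $\epsilon$ centered at points $x_i$, any zero of $h\circ\Phi$ in such a ball forces $\Norm{(h\circ\Phi)(x_i)}\leq C\epsilon$, where $C$ is the Lipschitz constant of $h\circ\Phi$ on $K$. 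Using that $\Phi(x_i)$ has uniformly bounded density on $W$ and that $h$ is a submersion, the event $\brackets*{\Norm{(h\circ\Phi)(x_i)}\leq C_0\epsilon}$ has probability of order $\epsilon^{c_k}$, so after conditioning on $C\leq C_0$ and summing, the total probability is of order $\epsilon^{c_k-n}\to 0$ as $\epsilon\to 0$. Letting $C_0\to\infty$ at the end (since $C$ is almost surely finite) and taking a countable union over the chart covering of $A_k$, the strata $k$, and the coordinate domains covering $M$, this yields $Z_{\mathrm{sing}}=\emptyset$ almost surely. The main step where care is needed is this covering estimate together with the transfer of density bounds through the submersion $h$; both are standard but require fixing compatible notions of uniformity in $x$ and in the chart of $W$.
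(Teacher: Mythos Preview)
Your proposal is correct and follows the same overall route as the paper: reduce to the local Euclidean model via Lemma~\ref{lem: local picture}, then exploit uniform $1$-non-degeneracy to run a Bulinskaya-type argument. The only difference is one of packaging: the paper invokes~\cite[Prop.~6.12]{AW2009} directly after the localization, whereas you reconstruct the essential content of that result by stratifying the rank-deficient locus, checking that each stratum has codimension at least $n+1$, and carrying out the covering estimate explicitly. Your codimension computation and the handling of the random Lipschitz constant via truncation $C\leq C_0$ are both sound; the one point to keep tidy in a full write-up is making the density transfer through the submersion $h$ uniform on each chart, which you correctly flag.
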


\begin{proof}
As in the proof of Proposition~\ref{prop: weak Bulinskaya}, it is enough to prove the result in local coordinates given by Lemma~\ref{lem: local picture}. In these coordinates, $s$ reads as a $\cC^2$ centered Gaussian field $f:\Omega \to \R^r$ which is uniformly $1$-non-degenerate, that is, $\det\var{f(x),D_xf}$ is bounded from below on $\Omega$. Then the result follows from~\cite[Prop.~6.12]{AW2009}.
\end{proof}

The Riemannian metric $g$ induces a metric on $Z_\text{reg}$, which in turn defines an $(n-r)$-dimensional Riemannian volume measure $\dx \Vol_Z$. This measure coincides with the $(n-r)$-dimensional Hausdorff measure on $Z$. In the following, we consider this measure as a Radon measure on $M$ defined as follows. Recall that the space of Radon measures is the topological dual of $\cC^0_c(M)$, and that being a non-negative Radon is equivalent to being a Borel measure which is finite on compact subsets.

\begin{dfn}[Random measure associated with $Z$]
\label{def: nu}
We denote by $\nu$ the random non-negative Radon measure on $M$ defined by:
\begin{equation*}
\forall \phi \in \cC^0_c(M), \qquad \prsc{\nu}{\phi} = \int_{Z_\text{reg}} \phi(x) \dx \Vol_Z(x).
\end{equation*}
We define $\prsc{\nu}{\phi}$ similarly if $\phi$ is non-negative Borel function (in which case $\prsc{\nu}{\phi} \in [0,+\infty]$) or if $\phi$ is a Borel function such that $\prsc{\nu}{\norm{\phi}}<+\infty$ almost surely.
\end{dfn}

\begin{ex}
\label{ex: nu}
If $n=r$ then $Z$ is almost surely locally finite. In this case $\nu = \sum_{x \in Z} \delta_x$ is the random counting measure of this point process.
\end{ex}

Let us go back to the local model of Lemma~\ref{lem: local picture}. Around any $x_0 \in M$ there exists a chart~$(U,\varphi)$ and a local trivialization of the kind described by~\eqref{eq: diagram trivialization}. Since $s$ is $\cC^1$ and non-degenerate, it corresponds in local coordinates to a $\cC^1$ non-degenerate Gaussian field $f:\Omega \to \R^r$. We still denote by $Z$ (resp.~$Z_\text{reg}$) the image of $Z$ (resp.~$Z_\text{reg}$) by $\varphi$, which is the zero set of $f$ (resp.~its regular part). Similarly, we still denote by $g$ (resp.~$\dx \Vol_Z$) the push-forward to $\Omega$ of the metric $g$ (resp.~of the measure $\dx \Vol_Z$), and we identify test-functions on~$U$, with test-functions on $\Omega$. Thus, if $\phi \in L^\infty_c(U)$ we have:
\begin{equation*}
\prsc{\nu}{\phi} = \int_{Z_\text{reg}} \phi(x) \dx \Vol_Z(x),
\end{equation*}
where we think of everything on the right-hand side as defined on $\Omega$. Now, the measure $\dx \Vol_Z$ is the $(n-r)$-dimensional Riemannian volume on $Z \subset \Omega \subset \R^n$ induced by $g$. In the following, we will need to understand how it compares with the Riemannian volume $\dx \Vol^0_Z$ on $Z$ induced by the Euclidean metric. This is the purpose of what comes next.

\begin{dfn}[Riemannian densities]
\label{def: Riemannian densities}
Let $x \in \Omega$ and let $G$ be a subspace of $\R^n$, we denote by $\det\parentheses*{g(x)_{\vert G}}$ the determinant of the restriction to $G$ of the inner product $g(x)$, in any basis of $G$ which is orthonormal for the Euclidean inner product of $\R^n$.

For all $r \in \ssquarebrackets{0}{n}$, we denote by $\gamma_r:\Omega \times \gr[\R^n]{r} \to (0,+\infty)$ the continuous map defined by $\gamma_r:(x,G) \mapsto\det\parentheses*{g(x)_{\vert G}}^\frac{1}{2}$. We also write $\gamma:x \mapsto \gamma_0(x,\R^n)$ for simplicity.
\end{dfn}

\begin{lem}[Comparing volumes]
\label{lem: comparing volumes}
Let $Z$ be a submanifold of codimension $r$ of $\Omega$ and let $\dx \Vol_Z$ (resp.~$\dx \Vol_Z^0$) denote the $(n-r)$-dimensional Riemannian volume on $Z$ induced by $g$ (resp.~the Euclidean metric). Then $\dx \Vol_Z$ admits the density $x \mapsto \gamma_r\parentheses*{x,T_xZ}$ with respect to $\dx \Vol_Z^0$. In particular $\dx \Vol_\Omega$ admits the density $\gamma$ with respect to the Lebesgue measure on $\Omega$.
\end{lem}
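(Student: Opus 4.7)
The plan is to reduce the statement to a pointwise comparison of volume forms on $T_xZ$ with respect to two different inner products: $g(x)$ restricted to $T_xZ$, and the Euclidean inner product restricted to $T_xZ$. To this end, I would work locally on $Z$ via a smooth parametrization $\psi: V \to Z$, where $V \subset \R^{n-r}$ is open. Under such a parametrization, both Riemannian volume measures pull back to Lebesgue-absolutely-continuous measures on $V$, with densities $\sqrt{\det(\psi^*g)(u)}$ and $\sqrt{\det(\psi^*g_0)(u)}$ respectively, where $g_0$ denotes the Euclidean inner product on $\R^n$. Hence the density of $\dx \Vol_Z$ with respect to $\dx \Vol_Z^0$ at $x = \psi(u)$ is the ratio $\sqrt{\det(\psi^*g)(u) / \det(\psi^*g_0)(u)}$.

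Next, I would identify this ratio with $\gamma_r(x, T_xZ)$ by a routine change-of-basis argument. Setting $v_i = \partial_i\psi(u) \in T_xZ$ and choosing an orthonormal basis $(e_k)_{1 \leq k \leq n-r}$ of $T_xZ$ for the Euclidean inner product, I would expand $v_i = \sum_k M_{ki} e_k$ for some invertible matrix $M$. The Euclidean and $g(x)$-Gram matrices of $(v_i)$ are then $\trans{M}M$ and $\trans{M} G M$ respectively, where $G$ is the matrix of $g(x)|_{T_xZ}$ in the basis $(e_k)$. Taking determinants yields $\det(\psi^*g)(u) = \det(G) \cdot \det(\psi^*g_0)(u)$, and $\sqrt{\det G}$ is precisely $\gamma_r(x, T_xZ)$ by Definition~\ref{def: Riemannian densities}. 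This gives the claimed density formula for the first part of the lemma.

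The last statement then follows from the first by applying it to the codimension-zero case $r = 0$ and $Z = \Omega$: indeed $T_x\Omega = \R^n$, the Euclidean volume on $\Omega$ is the standard Lebesgue measure, and $\gamma_0(x, \R^n) = \gamma(x)$ by definition. The whole argument is really a local linear-algebra computation, so I do not anticipate any obstacle; the only care needed is to ensure that both Riemannian volume measures are correctly pulled back through the parametrization and that the change-of-basis identity relating the two Gram matrices is applied in a coordinate-independent way.
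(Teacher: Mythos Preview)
Your proposal is correct and is essentially the same approach as the paper's: the paper's proof is a one-line reference to the definition of Riemannian volume measures, and you have simply unpacked that definition via a local parametrization and the standard Gram-matrix computation. Your change-of-basis argument identifying the ratio of Gram determinants with $\gamma_r(x,T_xZ)$ is exactly the content of ``follows directly from the definition.''
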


\begin{proof}
This follows directly from the definition of the Riemannian volume measures; see \cite[Chap.~3]{Lee2018} for example.
\end{proof}

We can now state and prove the Kac--Rice formula for the expectation of the linear statistics in our setting of Gaussian fields on $M$ with values in a vector bundle $E$.

\begin{dfn}[Kac--Rice density]
\label{def: rho1}
Let $\rho_1:M \to [0,+\infty)$ be defined by:
\begin{equation*}
\rho_1 : x \longmapsto \frac{\espcond{\jac{\nabla_xs}}{s(x)=0}}{\det\parentheses*{\strut 2\pi \var{s(x)}}^\frac{1}{2}},
\end{equation*}
where the numerator stands for the conditional expectation of $\jac{\nabla_xs}$ given that $s(x)=0$.
\end{dfn}

\begin{rem}
\label{rem: rho1}
Since $s$ is non-degenerate and $\cC^1$, the function $\rho_1$ is well-defined and continuous. Moreover, it does not depend on the choice of $\nabla$, nor on the choice of a metric on~$E$.
\end{rem}

\begin{prop}[Kac--Rice formula for the expectation]
\label{prop: Kac-Rice expectation}
Let $s$ be a non-degenerate $\cC^1$ centered Gaussian field. Then, for any Borel function $\phi:M \to \R$ which is non-negative or such that $\phi \rho_1 \in L^1(M)$, we have:
\begin{equation*}
\esp{\prsc{\nu}{\phi}} = \int_M \phi(x) \rho_1(x) \dx\Vol_M(x),
\end{equation*}
i.e.,~$\esp{\nu}$ is the Radon measure on $M$ with density $\rho_1$ with respect to the Riemannian volume $\dx\Vol_M$.
\end{prop}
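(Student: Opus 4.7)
The plan is to reduce the statement to the Euclidean Kac--Rice formula of~\cite[Thm.~2.2]{AAL2023}. By splitting $\phi = \phi_+ - \phi_-$ and invoking monotone convergence, it suffices to treat the case where $\phi \geq 0$ is continuous with compact support. Applying a partition of unity subordinate to a countable covering of $M$ by trivialization domains of the type described in Lemma~\ref{lem: local picture}, I can further assume that $\phi$ is supported in a single such chart $U$. In this chart $s$ reads as a $\cC^1$ non-degenerate centered Gaussian field $f:\Omega \to \R^r$ and, by Lemma~\ref{lem: comparing volumes}, the Riemannian volumes read as $\dx\Vol_M = \gamma(x)\,\dx x$ on $\Omega$ and $\dx\Vol_Z = \gamma_r(x, T_xZ)\,\dx\Vol_Z^0$ on $Z_\text{reg}$, where $\dx x$ is the Lebesgue measure and $\dx\Vol_Z^0$ is the Euclidean $(n-r)$-dimensional Hausdorff measure.

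Next, I would apply the Euclidean Kac--Rice formula to $f$ in its generalized form, which allows a test-function depending on the $1$-jet of $f$ (see e.g.~\cite[Chap.~6]{AW2009}). Taking the weighted test-function $\Phi(x, D_xf) = \phi(x)\,\gamma_r(x, \ker D_xf)$ and using that $\ker D_xf = T_xZ$ on $Z_\text{reg}$, this yields
\begin{equation*}
\esp{\prsc{\nu}{\phi}} = \int_\Omega \phi(x)\, \frac{\espcond{\gamma_r(x, \ker D_xf)\, \jac{D_xf}}{f(x)=0}}{\det\parentheses*{2\pi \var{f(x)}}^{\frac{1}{2}}} \dx x,
\end{equation*}
where the Jacobian on the right-hand side is computed with the Euclidean structures on both $\R^n$ and $\R^r$.

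To conclude, it remains to identify this integrand with $\phi(x)\,\rho_1(x)\,\gamma(x)$. Under the trivialization, $s(x)$ and $f(x)$ coincide as Gaussian vectors, so the variance terms match (and by Remark~\ref{rem: rho1} the choice of a metric on $E_x$ does not affect $\rho_1$). The whole proof then reduces to the pointwise linear-algebra identity
\begin{equation*}
\gamma_r(x, \ker L)\, \det\parentheses*{L\, L^T}^{\frac{1}{2}} = \gamma(x)\, \det\parentheses*{L\, g(x)^{-1}\, L^T}^{\frac{1}{2}},
\end{equation*}
valid for every surjective linear map $L: \R^n \to \R^r$. This expresses the fact that the Riemannian Jacobian of $\nabla_xs$ (using $g(x)$ on the source) differs from the Euclidean Jacobian of $D_xf$ by precisely the ratio $\gamma_r(x, \ker L)/\gamma(x)$. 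To prove it, I would pick a Euclidean-orthonormal basis of $\R^n$ adapted to the orthogonal splitting $\ker L \oplus (\ker L)^\perp$; then $L$ reads as $[0 \mid M]$ with $M$ invertible, and the matrix of $g(x)$ takes the block form $\begin{pmatrix}A & B \\ B^T & D\end{pmatrix}$ with $A$ the matrix of $g(x)_{\vert \ker L}$. A short computation based on the Schur complement formula for $\det g(x)$ and for $\det\parentheses*{L\, g(x)^{-1}\, L^T}$ then delivers the identity.

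The main, though not very deep, obstacle is to keep track of the various metric factors so that the Euclidean conclusion provided by~\cite{AAL2023} is correctly converted into the Riemannian quantities appearing in Proposition~\ref{prop: Kac-Rice expectation}; this is the sole content of the linear-algebra identity above. Once it is settled, the argument is a routine extension of the Euclidean case.
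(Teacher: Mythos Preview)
Your proposal is correct and follows essentially the same route as the paper: localize via a partition of unity, apply the Euclidean Kac--Rice formula with the weight $\phi(x)\gamma_r(x,\ker D_xf)$, and then convert Euclidean into Riemannian Jacobians via the Schur-complement identity you state (which is exactly the paper's Lemma~\ref{lem: comparing Jacobians}). The only cosmetic differences are that the paper cites~\cite[Thm.~7.1]{AAL2023} rather than~\cite{AW2009}, and explicitly multiplies the weight by the indicator of the set of surjective $D_xf$ to ensure the lower semi-continuity required there; you should similarly make sure your chosen version of Kac--Rice accommodates a weight that is only defined on the open set where $D_xf$ is surjective.
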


\begin{rem}
\label{rem: Kac-Rice}
For any Borel maps $\phi_1$ and $\phi_2$, we have
\begin{equation*}
\esp{\norm*{\strut\prsc{\nu}{\phi_1}-\prsc{\nu}{\phi_2}}} \leq \esp{\prsc*{\nu}{\norm*{\phi_1-\phi_2}}} = \int_M \norm{\phi_1(x)-\phi_2(x)} \rho_1(x) \dx\Vol_M(x).
\end{equation*}
Then, if $\phi_1=\phi_2$ almost everywhere on $M$ we have $\prsc{\nu}{\phi_1}=\prsc{\nu}{\phi_2}$ almost surely. Thus, $\prsc{\nu}{\phi}$ makes sense as a random variable even if $\phi$ is only defined up to modification on a negligible set.
\end{rem}

\begin{proof}[Proof of Proposition~\ref{prop: Kac-Rice expectation}]
By a partition of unity argument, it enough to prove the result if~$\phi$ is compactly supported in an open domain $U$ satisfying the same properties as in Lemma~\ref{lem: local picture}. In this case, in local coordinates, the field $s$ corresponds to a non-degenerate $\cC^1$ centered Gaussian field $f:\Omega \to \R^r$ with $\Omega \subset \R^n$ open. Thanks to Remark~\ref{rem: rho1}, we can assume that $\nabla$ corresponds in this trivialization to the standard derivation for maps from $\Omega$ to $\R^r$, and that the metric on~$E$ corresponds to the canonical inner product on $\R^r$. Identifying $Z_\text{reg}$, the metric $g$, the measure $\dx\Vol_Z$ and the test-function $\phi$ with their images in the trivialization, we have reduced our problem to proving the result for the vanishing locus of $f$ with the volume measures induced by $g$.

By Lemma~\ref{lem: comparing volumes}, we have:
\begin{equation*}
\esp{\prsc{\nu}{\phi}} = \esp{\int_{Z_\text{reg}} \phi(x) \dx \Vol_Z(x)} = \esp{\int_{Z_\text{reg}} \phi(x)\gamma_r(x,\ker D_xf) \dx \Vol_Z^0(x)}.
\end{equation*}
For all $x \in \Omega$ and $\lambda \in \cC^0\parentheses*{\Omega,\mathcal{L}(\R^n,\R^r)}$ we define $\Psi(x,\lambda) = \phi(x)\gamma_r(x,\ker \lambda(x))\one_O\parentheses*{\lambda(x)}$, where $O = \brackets*{L \in \mathcal{L}(\R^n,\R^r) \mvert \jac{L}>0}$. Since $O$ is open and the maps $\ker:O\to \gr[\R^n]{r}$ and $\gamma_r$ are continuous, the map $\Psi$ is lower semi-continuous with respect to each variable, where $\cC^0\parentheses*{\Omega,\mathcal{L}(\R^n,\R^r)}$ is equipped with the weak topology. Thus, we can apply the Euclidean Kac--Rice formula with weight from~\cite[Thm.~7.1 and Rem.~8]{AAL2023} to $\Psi(x,Df)$. This yields
\begin{equation*}
\esp{\prsc{\nu}{\phi}} = \int_\Omega \phi(x)\frac{\espcond{\gamma_r(x,\ker D_xf)\Jac^0\parentheses*{D_xf}}{f(x)=0}}{\det\parentheses*{2\pi \var{f(x)}}^\frac{1}{2}}\dx x,
\end{equation*}
where $\Jac^0$ means that we computed the Jacobian with respect to the Euclidean metric on $\R^n$.

To conclude, we need to compare $\Jac^0$ with the Jacobian $\Jac$ with respect to $g$. This is the content of Lemma~\ref{lem: comparing Jacobians} below, which yields that $\gamma_r(x,\ker D_xf)\Jac^0\parentheses*{D_xf} = \gamma(x)\jac{D_xf}$. Since $\gamma(x)$ is deterministic, by Lemma~\ref{lem: comparing volumes} we have:
\begin{equation*}
\esp{\prsc{\nu}{\phi}} = \int_\Omega \phi(x)\frac{\espcond{\Jac\parentheses*{D_xf}}{f(x)=0}}{\det\parentheses*{2\pi \var{f(x)}}^\frac{1}{2}}\gamma(x)\dx x = \int_\Omega \phi(x)\rho_1(x) \dx \Vol_\Omega(x).
\end{equation*}
This proves that the result holds locally, that is, for a field $f:\Omega \to \R^r$, with the volume measures induced by any Riemannian metric on $\Omega$, which concludes the proof.
\end{proof}

\begin{lem}[Comparing Jacobians]
\label{lem: comparing Jacobians}
Let $x \in \Omega$ and let $L:\R^n \to \R^r$ be a surjective linear map. With the same notation as above, we have $\gamma_r(x,\ker L) \Jac^0(L) = \gamma(x) \jac{L}$.
\end{lem}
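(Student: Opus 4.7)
The plan is to reduce both sides of the identity to a common linear-algebra computation by choosing a basis of $\R^n$ adapted simultaneously to $\ker L$ and to the Euclidean structure. Concretely, I would pick a Euclidean orthonormal basis $(e_1,\dots,e_n)$ of $\R^n$ whose first $n-r$ vectors span $\ker L$, and let $G=\parentheses*{g(x)(e_i,e_j)}_{1\leq i,j\leq n}$ denote the Gram matrix of $g(x)$ in this basis, written in block form with upper-left block $G_{11}$ of size $(n-r)\times(n-r)$. Directly from Definition~\ref{def: Riemannian densities}, one then reads $\gamma_r(x,\ker L)^2=\det G_{11}$ and $\gamma(x)^2=\det G$.

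In this basis the matrix of $L$ takes the form $(0\mid A)$ with $A\in M_r(\R)$ invertible by surjectivity of $L$, so that $\Jac^0(L)^2=\det(LL^T)=(\det A)^2$. For the $g$-Jacobian, the defining relation $g(x)(u,L^*_g v)=\prsc{Lu}{v}$ translates in matrix form as $L^T=GL^*_g$, hence $L^*_g=G^{-1}L^T$ and $\jac{L}^2=\det(LG^{-1}L^T)$. Writing $G^{-1}$ in the same block decomposition and denoting its bottom-right $r\times r$ block by $R$, the product $LG^{-1}L^T$ collapses to $ARA^T$, so that $\jac{L}^2=(\det A)^2\det R$.

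It only remains to identify $\det R$ in terms of $\det G$ and $\det G_{11}$. Since $g(x)$ is positive definite, the block $G_{11}$ is invertible, and the Schur complement formula applied to $G$ yields simultaneously the identities $R=(G_{22}-G_{12}^T G_{11}^{-1}G_{12})^{-1}$ and $\det G=\det G_{11}\cdot\det(G_{22}-G_{12}^T G_{11}^{-1}G_{12})$. Combining these gives $\det R=\det G_{11}/\det G$; substituting into the formula for $\jac{L}^2$ and taking square roots then produces the desired identity $\gamma_r(x,\ker L)\Jac^0(L)=\gamma(x)\jac{L}$.

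I do not anticipate any real obstacle: the argument is a clean block-matrix manipulation, the only delicate point being the identification $L^*_g=G^{-1}L^T$ of the $g$-adjoint with $G^{-1}$ times the Euclidean transpose. A more coordinate-free alternative would be to introduce an isometry $V$ from $\R^n$ equipped with the Euclidean inner product onto $\parentheses*{\R^n,g(x)}$, whose columns form a $g$-orthonormal basis of $\R^n$, and to exploit the relation $\jac{L}=\Jac^0(L\circ V)$ together with the coarea factorization of $V$ through $\ker L$; but the block computation above is the most direct.
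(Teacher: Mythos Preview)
Your proposal is correct and follows essentially the same approach as the paper: both choose a Euclidean orthonormal basis adapted to the splitting $\ker L \oplus (\ker L)^\perp$, write $g(x)$ in block form, and invoke the Schur complement to relate the relevant determinants. The only cosmetic differences are the ordering of the two blocks and that you compute $LL_g^* = LG^{-1}L^T$ directly, whereas the paper solves the linear system $L_0^* = G L_g^*$ for the blocks of $L_g^*$; both routes land on the same Schur complement identity.
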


\begin{proof}
We denote by $L^*_g$ (resp.~$L^*_0$) the adjoint of $L$ with respect to the inner product $g(x)$ (resp.~the Euclidean inner product). In a Euclidean orthonormal basis adapted to $\ker(L)^\perp \oplus \ker(L)$, the matrix of $g(x)$ is symmetric of the form $\parentheses*{\begin{smallmatrix}A & \trans{B} \\ B  & C \end{smallmatrix}}$ with $A$ and $C$ positive-definite, the matrix of $L$ is $\parentheses*{\begin{smallmatrix}F & 0 \end{smallmatrix}}$, that of $L_0^*$ is $\parentheses*{\begin{smallmatrix}\trans{F} \\ 0 \end{smallmatrix}}$ and that of $L^*_g$ is $\parentheses*{\begin{smallmatrix}X \\ Y \end{smallmatrix}}$. We have $\parentheses*{\begin{smallmatrix}\trans{F} \\ 0 \end{smallmatrix}} = \parentheses*{\begin{smallmatrix}A & \trans{B} \\ B & C \end{smallmatrix}}\parentheses*{\begin{smallmatrix}X \\ Y \end{smallmatrix}}$, which leads to $\trans{F} = \parentheses*{A-\trans{B}C^{-1}B}X$. Hence,
\begin{equation*}
\det(LL_0^*)=\det(F \trans{F}) = \det\parentheses*{A-\trans{B}C^{-1}B} \det(FX) = \det\parentheses*{A-\trans{B}C^{-1}B}\det(LL_g^*),
\end{equation*}
and $\gamma_r(x,G)\Jac^0(L) = \det(C)^\frac{1}{2}\det\parentheses*{A-\trans{B}C^{-1}B}^\frac{1}{2}\jac{L}$. Since $A-\trans{B}C^{-1}B$ is the Schur complement of $C$ in the matrix of $g(x)$, we have $\det(C)^\frac{1}{2}\det\parentheses*{A-\trans{B}C^{-1}B}^\frac{1}{2}=\gamma(x)$.
\end{proof}


\subsection{Factorial moment measures and Kac--Rice densities}
\label{subsec: Factorial moment measures and Kac--Rice densities}

As in the previous section, we consider a non-degenerate $\cC^1$ centered Gaussian field $s:M \to E$ which is a random section of some vector bundle $E \to M$. Recall that $n= \dim(M)$ and $r \in \ssquarebrackets{1}{n}$ is the rank of $E$. We are interested in the finiteness of the moments of the linear statistics $\prsc{\nu}{\phi}$ with $\phi \in L^\infty_c(M)$; see Definition~\ref{def: nu}. In this section, we introduce the factorial moment measures and Kac--Rice densities of the fields $s$, and we relate them to higher moments of the linear statistics.

In the following, $p$ will always denote the order of the moment we are considering. Recall that, under our hypothesis on $s$, the random measure $\nu$ is almost surely a non-negative Radon measure on $M$; see Remark~\ref{rem: Kac-Rice}.

\begin{dfn}[Product measures]
\label{def: product measures}
Let $p\geq 1$, we denote by $\nu^{\otimes p}$ the product measure of $\nu$ with itself $p$-times. We also denote by $\nu^{[p]}$ the restriction of $\nu^{\otimes p}$ to $M^p \setminus \Delta_p$. That is, for any test-function $\Phi$:
\begin{align*}
\prsc*{\nu^{\otimes p}}{\Phi} &= \int_{Z_\text{reg}^p} \Phi(\ux) \dx\Vol_Z^{\otimes p}(\ux) & &\text{and} & \prsc*{\nu^{[p]}}{\Phi} &= \int_{Z_\text{reg}^p \setminus \Delta_p} \Phi(\ux) \dx\Vol_Z^{\otimes p}(\ux).\end{align*}
Almost surely, these measures are Radon measures on $M^p$. More generally, if we want to consider product spaces indexed by a non-empty finite set $A$ instead of $\ssquarebrackets{1}{p}$, we denote by $\nu^{\otimes A}$ the product measure of $\nu$ with itself $\norm{A}$ times on $M^A$ and by $\nu^{[A]}$ its restriction to $M^A\setminus \Delta_A$.
\end{dfn}

The following lemma describes the relation between $\nu^{\otimes p}$ and $\nu^{[p]}$, using the notation introduced in Section~\ref{subsec: Sets, partitions and diagonals}.

\begin{lem}[Relation between \texorpdfstring{$\nu^{\otimes p}$ and $\nu^{[p]}$}{}]
\label{lem: relation between nu p}
Let $p \geq 1$. If $r<n$ then $\nu^{\otimes p} = \nu^{[p]}$. If $r=n$ then $\nu^{\otimes p} = \sum_{\cI \in \cP_p} \parentheses*{\iota_\cI}_*\parentheses*{\nu^{[\cI]}}$.
\end{lem}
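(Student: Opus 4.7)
The plan is to split into the two cases according to whether $\nu$ is a.s.\ non-atomic ($r<n$) or purely atomic ($r=n$), and in both cases use Fubini together with the explicit stratification $M^p=\bigsqcup_{\cI\in\cP_p}\Delta_{p,\cI}$ from Definition~\ref{def: strata of the diagonal}.

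\textbf{Case $r<n$.} First I would observe that $\nu$ is almost surely non-atomic: it is supported on $Z_{\text{reg}}$, which is a $\cC^1$ submanifold of $M$ of dimension $n-r\geq 1$, and the $(n-r)$-dimensional Riemannian volume assigns zero mass to any singleton. Hence, for any pair $i\neq j$, Fubini on the coordinate $x_j$ gives
\[
\nu^{\otimes p}\bigl(\{\ux\in M^p\mid x_i=x_j\}\bigr) = \int_{M^{p-1}} \nu(\{x_i\})\,d\nu^{\otimes(p-1)}\bigl(\ux_{\ssquarebrackets{1}{p}\setminus\{j\}}\bigr)=0,
\]
so a union bound over pairs yields $\nu^{\otimes p}(\Delta_p)=0$, and therefore $\nu^{\otimes p}=\nu^{[p]}$.

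\textbf{Case $r=n$.} Here $Z_{\text{reg}}$ is a.s.\ a discrete (hence locally finite) subset of $M$ and, by Example~\ref{ex: nu}, $\nu=\sum_{x\in Z_{\text{reg}}}\delta_x$. Consequently
\[
\nu^{\otimes p}=\sum_{(x_1,\dots,x_p)\in Z_{\text{reg}}^p}\delta_{(x_1,\dots,x_p)},
\]
where the sum ranges over all ordered $p$-tuples, repetitions allowed. I would then partition this sum according to the clustering partition $\cI(\ux)$: for fixed $\cI\in\cP_p$, the map $\iota_\cI$ is a bijection from $Z_{\text{reg}}^\cI\setminus\Delta_\cI$ onto $Z_{\text{reg}}^p\cap\Delta_{p,\cI}$ (by Definition~\ref{def: diagonal inclusions} and the definition of $\Delta_{p,\cI}$). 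Hence the restriction of $\nu^{\otimes p}$ to $\Delta_{p,\cI}$ equals $(\iota_\cI)_*\bigl(\sum_{\uy\in Z_{\text{reg}}^\cI\setminus\Delta_\cI}\delta_\uy\bigr)=(\iota_\cI)_*\bigl(\nu^{[\cI]}\bigr)$. Summing over $\cI\in\cP_p$, using $M^p=\bigsqcup_{\cI}\Delta_{p,\cI}$, gives the claimed identity.

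The only subtle point is the a.s.\ non-atomicity of $\nu$ when $r<n$, and this really just amounts to the trivial fact that the $(n-r)$-dimensional volume of a point is zero whenever $n-r\geq 1$; everything else is bookkeeping of partitions and a straightforward application of Fubini.
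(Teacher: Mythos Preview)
Your proof is correct and follows essentially the same approach as the paper. The only difference is that for the $r=n$ case the paper simply cites an earlier reference for the combinatorial identity, whereas you spell out the bijection $\iota_\cI:Z_{\text{reg}}^\cI\setminus\Delta_\cI\to Z_{\text{reg}}^p\cap\Delta_{p,\cI}$ explicitly; your version is thus more self-contained.
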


\begin{proof}
If $r<n$ then $Z_\text{reg}$ is a $\cC^1$ submanifold of positive dimension in $M$. In particular, the large diagonal in $\parentheses*{Z_\text{reg}}^p$ has positive codimension, and hence is negligible for $\dx \Vol_Z^{\otimes p}$. Thus $\nu^{\otimes p} = \nu^{[p]}$ in this case.

If $r=n$ then $Z_{reg}$ is a locally finite set and $\nu$ is its counting measure. Similarly, $\nu^{\otimes p}$ is the counting measure of the locally finite $\parentheses*{Z_\text{reg}}^p$, and $\Delta_p$ in no longer negligible for this measure. If $r=n=1$, we proved in~\cite[Lem.~2.7]{AL2021} that $\nu^{\otimes p} = \sum_{\cI \in \cP_p} \parentheses*{\iota_\cI}_*\parentheses*{\nu^{[\cI]}}$. The proof is purely combinatorics and it extends immediately to the case $r=n \geq 1$.
\end{proof}

Our interest in these measures is that, by the Fubini theorem, for all $\phi \in L^\infty_c(M)$ we have $\esp{\prsc{\nu}{\phi}^p} = \esp{\prsc*{\nu^{\otimes p}}{\phi^{\otimes p}}} = \prsc*{\esp{\nu^{\otimes p}}}{\phi^{\otimes p}}$, where $\phi^{\otimes p}:(x_1,\dots,x_p) \mapsto \phi(x_1)\dots\phi(x_p)$. Thus, the measure $\esp{\nu^{\otimes p}}$ is closely related with the computation of moments of linear statistics. For technical reasons, it is more convenient to consider $\esp{\nu^{[p]}}$ instead.

\begin{dfn}[Moment measures]
\label{def: moment measures}
Let $p \geq 1$, the measure $\esp{\nu^{\otimes p}}$ is called the \emph{$p$-th moment measure} of the field $s$ and $\esp{\nu^{[p]}}$ is called its \emph{$p$-th factorial moment measure}.
\end{dfn}

\begin{dfn}[Kac--Rice density of order $p$]
\label{def: rhop}
Let $p \geq 1$ and let us assume that the random vector $\parentheses*{s(x_1),\dots,s(x_p)}$ is non-degenerate for all $(x_1,\dots,x_p) \in M^p \setminus \Delta_p$. Then we define
\begin{equation*}
\rho_p:\parentheses*{x_1,\dots,x_p} \longmapsto \frac{\espcond{\strut \prod_{i=1}^p \jac{\nabla_{x_i}s}}{\forall i \in \ssquarebrackets{1}{p}, s(x_i)=0}}{\det\parentheses*{2\pi \var{s(x_1),\dots,s(x_p)}\strut}^\frac{1}{2}}
\end{equation*}
from $M^p\setminus\Delta_p$ to $[0,+\infty)$, where the numerator is the conditional expectation of $\prod_{i=1}^p \jac{\nabla_{x_i}s}$ given that $s(x_i)=0$ for all $i \in \ssquarebrackets{1}{p}$.
\end{dfn}

Once again, $\rho_p$ is well-defined and continuous on $M^p \setminus \Delta_p$ thanks to our non-degeneracy hypothesis. However, its expression is singular along $\Delta_p$.  In particular, $\rho_p$ is in general not bounded, which raises the question of its local integrability near $\Delta_p$. For example, if $f:\R^n \to \R$ is a non-degenerate enough stationary Gaussian field and $p=2$, one can check that as $y \to x$, the corresponding Kac--Rice density $\rho_2(x,y)$ behaves like $\Norm{y-x}$ if $n=1$ and like $\frac{1}{\Norm{y-x}}$ if $n>1$.
 
\begin{prop}[Kac--Rice formula for the $p$-th factorial moment]
\label{prop: Kac--Rice moments}
Let $s$ be a $\cC^1$ centered Gaussian field such that $\parentheses*{s(x_1),\dots,s(x_p)}$ is non-degenerate for all $(x_1,\dots,x_p) \in M^p \setminus \Delta_p$. Then, for any Borel function $\Phi:M^p \to \R$ which is non-negative or such that $\Phi \rho_p \in L^1(M^p)$, we have:
\begin{equation*}
\esp{\prsc*{\nu^{[p]}}{\Phi}} = \int_{M^p} \Phi(\ux) \rho_p(\ux) \dx\Vol_M^{\otimes p}(\ux),
\end{equation*}
i.e.,~$\esp{\nu^{[p]}}$ is the measure on $M^p$ with density $\rho_p$ with respect to the Riemannian volume $\dx\Vol_M^{\otimes p}$.
\end{prop}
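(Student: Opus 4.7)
The plan is to reduce to the weighted Euclidean Kac--Rice formula of~\cite[Thm.~7.1 and Rem.~8]{AAL2023} applied to the product field $F : \ux \mapsto (s(x_1), \dots, s(x_p))$, following the strategy used for Proposition~\ref{prop: Kac-Rice expectation} but on a product. Since $\Delta_p$ has positive codimension in $M^p$, it is $\dx \Vol_M^{\otimes p}$-negligible, and by definition $\nu^{[p]}$ vanishes on $\Delta_p$, so both sides of the desired equality depend only on the restriction of $\Phi$ to $M^p \setminus \Delta_p$. We may therefore assume that $\Phi$ is compactly supported in $M^p \setminus \Delta_p$, and using a partition of unity on this open manifold we further reduce to the case where $\Phi$ is supported in a product $U_1 \times \cdots \times U_p$ of pairwise disjoint trivialization domains as in Lemma~\ref{lem: local picture}.

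In such local coordinates, $s$ is represented by $p$ jointly Gaussian fields $f_i : \Omega_i \to \R^r$, and we form $F : \Omega_1 \times \cdots \times \Omega_p \to (\R^r)^p$ by $F(x_1, \dots, x_p) = (f_1(x_1), \dots, f_p(x_p))$. Since the $U_i$ are disjoint, every $\ux$ in the product corresponds to a point of $M^p \setminus \Delta_p$, and our non-degeneracy hypothesis ensures that $F$ is non-degenerate everywhere on $\Omega_1 \times \cdots \times \Omega_p$. The differential $D_\ux F$ is block diagonal with blocks $D_{x_i} f_i$, so $\Jac(D_\ux F) = \prod_{i=1}^p \Jac(D_{x_i} f_i)$, $\ker D_\ux F = \bigoplus_i \ker D_{x_i} f_i$, and $\var{F(\ux)} = \var{f_1(x_1),\dots,f_p(x_p)}$.

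Apply the weighted Euclidean Kac--Rice formula to $F$ with the lower semi-continuous weight $\Psi(\ux, \lambda) = \Phi(\ux) \prod_{i=1}^p \gamma_r(x_i, \ker \lambda_i) \one_O(\lambda_i)$, where $\lambda = (\lambda_1,\dots,\lambda_p)$ and $O$ denotes the open subset of surjective linear maps. On the left-hand side, the kernels $\ker D_{x_i} f_i$ sit in pairwise orthogonal factors of $(\R^n)^p$ for the product metric $\bigoplus_i g(x_i)$, so the factor $\prod_i \gamma_r(x_i, \ker D_{x_i} f_i)$ converts the Euclidean volume on the zero set of $F$ into the product of the Riemannian volumes on the $f_i^{-1}(0)_{\text{reg}}$; combining this with the weak Bulinskaya Lemma (Proposition~\ref{prop: weak Bulinskaya}) applied to each factor to discard singular parts recovers $\esp{\prsc{\nu^{[p]}}{\Phi}}$. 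On the right-hand side, applying Lemma~\ref{lem: comparing Jacobians} factor by factor turns each $\gamma_r(x_i, \ker D_{x_i} f_i) \Jac^0(D_{x_i} f_i)$ into $\gamma(x_i) \Jac(D_{x_i} f_i)$, and the resulting product $\prod_i \gamma(x_i)$ converts the Lebesgue measure into $\dx \Vol_M^{\otimes p}$ by Lemma~\ref{lem: comparing volumes}; the remaining conditional expectation is exactly $\rho_p(\ux)$ after identifying the event $\brackets*{F(\ux) = 0}$ with $\brackets*{s(x_i) = 0, \forall i \in \ssquarebrackets{1}{p}}$.

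The main difficulty, which is a matter of careful bookkeeping rather than a conceptual hurdle, is to check that all geometric quantities factor correctly across the $p$ coordinates: the Jacobian of $F$ because $DF$ is block diagonal, the kernel decomposition because it is orthogonal in the product metric, and the variance determinant because it already is the joint variance of $(s(x_1),\dots,s(x_p))$. The non-degeneracy assumption on $M^p \setminus \Delta_p$ is precisely what allows the Euclidean Kac--Rice formula to be applied to $F$ on the whole product domain, and summing over the partition of unity then yields the global equality.
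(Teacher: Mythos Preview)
Your argument is correct and rests on the same idea as the paper's proof: apply a Kac--Rice formula to the product field $\ux \mapsto (s(x_1),\dots,s(x_p))$ on $M^p \setminus \Delta_p$. The difference is one of packaging. You unwind back to the Euclidean formula of~\cite{AAL2023} via a partition of unity on $M^p \setminus \Delta_p$, then redo by hand the Jacobian and volume comparisons factor by factor. The paper instead observes that $S:\ux \mapsto (s(x_1),\dots,s(x_p))$ is itself a non-degenerate $\cC^1$ centered Gaussian section of the bundle $E^p \to M^p \setminus \Delta_p$ over the Riemannian manifold $(M^p \setminus \Delta_p, g^{\oplus p})$, whose zero-set measure is precisely $\nu^{[p]}$; Proposition~\ref{prop: Kac-Rice expectation} therefore applies verbatim to $S$ and yields the result in one line, the Kac--Rice density of $S$ being exactly $\rho_p$. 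Your route works, but all the bookkeeping you do (block-diagonal Jacobian, orthogonal kernel decomposition, product of $\gamma_r$ factors) is already absorbed into the proof of Proposition~\ref{prop: Kac-Rice expectation} once you recognize $S$ as a Gaussian section in its own right.
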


\begin{proof}
Let us consider $S:(x_1,\dots,x_p) \mapsto \parentheses{s(x_1),\dots,s(x_p)}$ on $M^p \setminus \Delta_p$, which is a random section of the restriction over $M^p \setminus \Delta_p$ of the vector bundle $E^p \to M^p$. This is a non-degenerate $\cC^1$ centered Gaussian field on $M^p \setminus \Delta_p$, and $\nu^{[p]}$ is the measure of integration over its zero set. Bearing in mind that $\Delta_p$ is negligible in $M^p$ for $\dx\Vol_M^{\otimes p}$, the result follows from Proposition~\ref{prop: Kac-Rice expectation} applied to $S$.
\end{proof}

The following proposition relates the properties of the Kac--Rice densities, the moment measures and the moments of linear statistics.

\begin{prop}[Relation between moments, measures and densities]
\label{prop: relation between moments measures and densities}
Let $p\geq 1$ and let $s$ be a $\cC^1$ centered Gaussian field such that $\parentheses*{s(x_1),\dots,s(x_p)}$ is non-degenerate for all $(x_1,\dots,x_p) \notin \Delta_p$. Then the following four properties are equivalent.
\begin{enumerate}
\item \label{item: relations linear stats} For all $\phi \in L^\infty_c(M)$, we have $\esp{\strut\norm*{\prsc{\nu}{\phi}}^p}<+\infty$.
\item \label{item: relations moment measures} For all $k \in \ssquarebrackets{1}{p}$, the moment measure $\esp{\nu^{\otimes k}}$ is Radon on $M^k$, i.e.,~finite on compact sets.
\item \label{item: relations factorial moment measures} For all $k \in \ssquarebrackets{1}{p}$, the factorial moment measure $\esp{\nu^{[k]}}$ is Radon on $M^k$.
\item \label{item: relations densities} For all $k \in \ssquarebrackets{1}{p}$, the Kac--Rice density satisfies $\rho_k \in L^1_\text{loc}(M^k)$.
\end{enumerate}
\end{prop}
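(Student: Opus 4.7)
The plan is to establish the chain of equivalences $(4) \Leftrightarrow (3) \Leftrightarrow (2) \Leftrightarrow (1)$, each link exploiting one specific tool already developed in the paper.

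The equivalence $(3) \Leftrightarrow (4)$ will be immediate from Proposition~\ref{prop: Kac--Rice moments}: since $\esp{\nu^{[k]}}$ admits $\rho_k$ as density with respect to $\dx\Vol_M^{\otimes k}$ on $M^k$, being Radon (finite on compact subsets) is exactly the condition $\rho_k \in L^1_\text{loc}(M^k)$. The non-degeneracy hypothesis on $(s(x_1),\dots,s(x_p))$ over $\ux \notin \Delta_p$ is what guarantees the Kac--Rice formula applies at every order $k \leq p$.

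For $(2) \Leftrightarrow (3)$ I will invoke Lemma~\ref{lem: relation between nu p}. If $r<n$, the measures $\nu^{\otimes k}$ and $\nu^{[k]}$ coincide and the equivalence is trivial. If $r=n$, the identity $\nu^{\otimes k} = \sum_{\cI \in \cP_k}(\iota_\cI)_*(\nu^{[\cI]})$ is a finite sum of non-negative measures supported on pairwise disjoint strata of $M^k$. Taking expectation commutes with this sum, so $\esp{\nu^{\otimes k}}$ is Radon if and only if each summand is; up to the reindexing $\iota_\cI$, each summand is $\esp{\nu^{[m]}}$ with $m = \norm{\cI} \leq k$. Hence condition $(2)$ at all levels $k \leq p$ is equivalent to condition $(3)$ at all levels $m \leq p$, which is exactly why both conditions must be quantified over every $k \in \ssquarebrackets{1}{p}$ rather than just at $k=p$.

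Finally, $(1) \Leftrightarrow (2)$ will follow from Fubini and a test with indicator functions. For $(2) \Rightarrow (1)$, Fubini gives
\begin{equation*}
\esp{\norm*{\prsc{\nu}{\phi}}^p} \leq \prsc*{\esp{\nu^{\otimes p}}}{\norm{\phi}^{\otimes p}},
\end{equation*}
which is finite because $\norm{\phi}^{\otimes p}$ is bounded with compact support in $M^p$. Conversely, testing $(1)$ against $\phi = \one_K$ for compact $K \subset M$ yields $\esp{\nu^{\otimes p}}(K^p) = \esp{\prsc{\nu}{\one_K}^p} < +\infty$; since $\prsc{\nu}{\one_K}$ is non-negative, all its moments of order $k \leq p$ are finite too (for instance via $X^k \leq 1+X^p$ for $X \geq 0$), so $\esp{\nu^{\otimes k}}(K^k)<+\infty$, and any compact subset of $M^k$ is contained in such a product $K^k$. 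No single step is a genuine obstacle; the only point requiring slight care is the $r=n$ case of $(2) \Leftrightarrow (3)$, where tracking the partition structure makes it apparent that bounding $\esp{\nu^{\otimes k}}$ reduces to bounding factorial moment measures of strictly smaller order, which is precisely what forces the four conditions to be quantified over every $k \in \ssquarebrackets{1}{p}$ simultaneously.
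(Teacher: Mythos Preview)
Your proposal is correct and uses the same ingredients as the paper's proof: Proposition~\ref{prop: Kac--Rice moments} for the link with $\rho_k$, Lemma~\ref{lem: relation between nu p} for the partition decomposition, and Fubini plus indicator test-functions for the link with linear statistics. The only organizational difference is that the paper argues via the cycle $(1)\Rightarrow(2)\Rightarrow(3)\Rightarrow(4)\Rightarrow(1)$, with the partition decomposition appearing in the step $(4)\Rightarrow(1)$ combined with Kac--Rice, whereas you prove pairwise equivalences and isolate the partition argument in $(3)\Rightarrow(2)$; this is a cosmetic rearrangement of the same ideas.
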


\begin{proof}
Let us assume~\eqref{item: relations linear stats}. Let $k \in \ssquarebrackets{1}{p}$ and let $K_0 \subset M^k$ be compact. There exists a compact set $K \subset M$ such that $K_0 \subset K^k$. Then,
\begin{equation*}
\prsc*{\esp{\nu^{\otimes k}}}{\one_{K_0}} \leq \prsc*{\esp{\nu^{\otimes k}}}{\one_{K}^{\otimes k}} = \esp{\prsc*{\nu}{\one_K}^k} = \esp{\norm*{\prsc*{\nu}{\one_K}}^k}.
\end{equation*}
Since $\one_K \in L^\infty_c(M)$, the $p$-th absolute moment of $\prsc{\nu}{\one_K}$ is finite; hence so is its $k$-th absolute moment. Thus $\prsc*{\esp{\nu^{\otimes k}}}{\one_{K_0}}<+\infty$ for all compact $K_0$ and~\eqref{item: relations moment measures} is satisfied.

If~\eqref{item: relations moment measures} is satisfied then so is~\eqref{item: relations factorial moment measures}. Indeed, for any $k \in \ssquarebrackets{1}{p}$, the measure $\nu^{[k]}$ is the restriction of $\nu^{\otimes k}$ to $M^k \setminus \Delta_k$. Thus, for any compact $K \subset M^k$ we have:
\begin{equation*}
\prsc*{\esp{\nu^{[k]}}}{\one_K} = \esp{\prsc*{\nu^{[k]}}{\one_K}} \leq \esp{\prsc*{\nu^{\otimes k}}{\one_K}} = \prsc*{\esp{\nu^{\otimes k}}}{\one_K} <+\infty.
\end{equation*}

If~\eqref{item: relations factorial moment measures} is satisfied, let $k \in \ssquarebrackets{1}{p}$ and let $K \subset M^k$ be a compact. By Proposition~\ref{prop: Kac--Rice moments} we have:
\begin{equation*}
\int_K \rho_k(\ux) \dx\Vol_M^{\otimes p}(\ux) = \int_M \one_K(\ux)\rho_k(\ux) \dx\Vol_M^{\otimes p}(\ux) = \esp{\prsc*{\nu^{[k]}}{\one_K}} = \prsc*{\esp{\nu^{[k]}}}{\one_K} <+\infty.
\end{equation*}
Thus $\rho_k$ is integrable on any compact set, that is, $\rho_k \in L^1_\text{loc}(M^k)$. This proves~\eqref{item: relations densities} in this case.

Finally, let us assume that~\eqref{item: relations densities} holds. Let $\phi \in L^\infty_c(M)$, and let us denote by $K \subset M$ its compact support. We have $\esp{\norm*{\prsc{\nu}{\phi}}^p}\leq \esp{\prsc*{\nu}{\norm{\phi}}^p} \leq \Norm{\phi}_\infty^p \esp{\prsc{\nu}{\one_K}^p}$, so it is enough to prove that $\esp{\prsc{\nu}{\one_K}^p} = \esp{\prsc*{\nu^{\otimes p}}{\one_{K^p}}}$ is finite. By Lemma~\ref{lem: relation between nu p}, whether $r=n$ or not, we have:
\begin{equation*}
\esp{\prsc*{\nu^{\otimes p}}{\one_{K^p}}} \leq \sum_{\cI \in \cP_p} \esp{\prsc*{\nu^{[\cI]}}{\one_{K^p} \circ \iota_\cI}} = \sum_{\cI \in \cP_p} \prsc*{\esp{\nu^{[\cI]}}}{\one_{K^\cI}}.
\end{equation*}
Then, the Kac--Rice formula for moments and the local integrability of the $(\rho_k)_{1 \leq k \leq p}$ yields
\begin{equation*}
\esp{\prsc{\nu}{\one_K}^p} \leq \sum_{\cI \in \cP_p}\prsc*{\esp{\nu^{[\cI]}}}{\one_{K^\cI}} = \sum_{\cI \in \cP_p} \int_{K^{\norm{\cI}}} \rho_{\norm{\cI}}(\ux)\dx\Vol_M^{\otimes \norm{\cI}}(\ux) <+\infty,
\end{equation*}
which proves~\eqref{item: relations linear stats} and concludes the proof.
\end{proof}


\subsection{Proofs of Theorems~\ref{thm: moments Rn} and~\ref{thm: moments M}: finiteness of moments}
\label{subsec: Proof of finiteness of moments}

The goal of this section is to prove~Theorems~\ref{thm: moments Rn} and~\ref{thm: moments M}, which give simple conditions for the finiteness of the moments of the linear statistics of a Gaussian field. We begin by proving a local version of Theorem~\ref{thm: moments Rn}, under a non-degeneracy hypothesis for the multijets of the field. This is Theorem~\ref{thm: moments Rn local} below. Then we deduce Theorem~\ref{thm: moments M} from Theorem~\ref{thm: moments Rn local}, in the case of Gaussian fields with value in a vector bundle. Finally, Theorem~\ref{thm: moments Rn} is obtained as a special case of Theorem~\ref{thm: moments M}.

Let $\Omega \subset \R^n$ be open. Recall that $\cMJ_p(\Omega,\R^r) \to C_p[\Omega]$ is defined in Definition~\ref{def: multijets} as the restriction over $C_p[\Omega] \subset C_p[\R^n]$ of the vector bundle $\cMJ_p(\R^n,\R^r) \to C_p[\R^n]$ from Theorem~\ref{thm: multijet bundle}.

\begin{thm}[Finiteness of moments, local version]
\label{thm: moments Rn local}
Let $f:\Omega \to \R^r$ be a centered Gaussian field and $\nu$ be as in Definition~\ref{def: nu}. Let $p \geq 1$, if $f$ is $\cC^p$ and for all $k \in \ssquarebrackets{1}{p}$ the Gaussian field $\mj_k(f,\cdot):C_k[\Omega] \to \cMJ_k(\Omega,\R^r)$ is non-degenerate, then the four equivalent statements in Proposition~\ref{prop: relation between moments measures and densities} hold.
\end{thm}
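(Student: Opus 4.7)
By Proposition~\ref{prop: relation between moments measures and densities}, it is enough to show that for every $k \in \ssquarebrackets{1}{p}$ the factorial moment measure $\esp{\nu^{[k]}}$ is finite on compact subsets of $\Omega^k$. Fix such a $k$. Since $f$ is $\cC^p$ and $k \leq p$, Theorem~\ref{thm: multijet bundle}\,\ref{item: thm multijet mj} ensures that $\mj_k(f,\cdot)$ defines a $\cC^1$ section of $\cMJ_k(\Omega,\R^r) \to C_k[\Omega]$, and it is non-degenerate by hypothesis. The plan is to apply the Kac--Rice formula (Proposition~\ref{prop: Kac-Rice expectation}) to this auxiliary Gaussian field on the ``compactified'' configuration space $C_k[\Omega]$, where it no longer degenerates, and then to transfer the resulting information back to $\nu^{[k]}$ via the proper map $\pi$.

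Equip $C_k[\Omega]$ with any smooth Riemannian metric and let $\tilde{\nu}_k$ denote the random Radon measure on $C_k[\Omega]$ associated with the zero set $\tilde{Z}_k$ of $\mj_k(f,\cdot)$ as in Definition~\ref{def: nu}. Since $\cMJ_k(\Omega,\R^r) \to C_k[\Omega]$ has rank $rk \leq nk = \dim C_k[\Omega]$, Proposition~\ref{prop: Kac-Rice expectation} produces a continuous Kac--Rice density $\tilde{\rho}_k$ on $C_k[\Omega]$. Given a compact $K \subset \Omega^k$, its preimage $\pi^{-1}(K)$ is compact by properness of $\pi$ (Theorem~\ref{thm: multijet bundle}\,\ref{item: thm multijet pi}), so that
\begin{equation*}
\esp{\prsc*{\tilde{\nu}_k}{\one_{\pi^{-1}(K)}}} = \int_{\pi^{-1}(K)} \tilde{\rho}_k(z) \dx \Vol_{C_k[\Omega]}(z) < +\infty.
\end{equation*}

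It remains to dominate $\prsc{\nu^{[k]}}{\one_K}$ almost surely by a constant multiple of $\prsc{\tilde{\nu}_k}{\one_{\pi^{-1}(K)}}$. Over the dense open set $\pi^{-1}(\Omega^k \setminus \Delta_k)$, the map $\pi$ is a diffeomorphism; moreover, Theorem~\ref{thm: multijet bundle}\,\ref{item: thm multijet ev} together with the local trivialization of $\cMJ_k(\Omega,\R^r)$ described at the end of Section~\ref{subsec: Definition of the bundle MPjRn} identifies $\tilde{Z}_k$ there with $Z^k \cap (\Omega^k \setminus \Delta_k)$, where $Z = f^{-1}(0)$, in such a way that regular points correspond to regular points. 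Performing the change of variables on these diffeomorphic submanifolds of dimension $(n-r)k$, the two Riemannian volume measures are related by the Jacobian $\norm{\det L_z}$, where $L_z = (d\pi_z)_{\vert T_z\tilde{Z}_k}$. Since $L_z$ is a restriction of $d\pi_z$, the elementary bound $\norm{\det L_z} \leq \Norm{d\pi_z}^{(n-r)k}$ combined with continuity of $z \mapsto \Norm{d\pi_z}$ on the compact set $\pi^{-1}(K)$ yields a uniform constant $C(K) > 0$. Since $\nu^{[k]}$ is supported in $\Omega^k \setminus \Delta_k$, this gives $\prsc{\nu^{[k]}}{\one_K} \leq C(K)\,\prsc{\tilde{\nu}_k}{\one_{\pi^{-1}(K)}}$ almost surely, and taking expectations concludes. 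The main technical subtlety is that even though $d\pi_z$ collapses dimensions along the exceptional locus $\pi^{-1}(\Delta_k)$, its operator norm $\Norm{d\pi_z}$ remains continuous on the whole of $C_k[\Omega]$, which is what allows the previous bound to survive uniformly on $\pi^{-1}(K)$.
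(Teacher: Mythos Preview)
Your proof is correct and follows essentially the same strategy as the paper: apply the Kac--Rice formula for the expectation to the non-degenerate multijet field $\mj_k(f,\cdot)$ on $C_k[\Omega]$, then push forward via the proper map $\pi$ using a Jacobian comparison to bound $\esp{\nu^{[k]}}$ on compacts. The only cosmetic differences are that the paper bounds the Jacobian via continuity of $G \mapsto \det\bigl((D_z\pi_{|G})^* D_z\pi_{|G}\bigr)^{1/2}$ on the compact Grassmann bundle over $\pi^{-1}(K)$ rather than via your coarser operator-norm bound $\Norm{d\pi_z}^{(n-r)k}$, and that it explicitly verifies (via Condition~\ref{item: thm multijet localness} of Theorem~\ref{thm: multijet bundle}) both that $\mj_k(f,\cdot)$ is a centered Gaussian field and that $\parentheses*{f(x_1),\dots,f(x_p)}$ is non-degenerate off $\Delta_p$, the latter being needed for the hypotheses of Proposition~\ref{prop: relation between moments measures and densities} to apply.
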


\begin{proof}
Let $f:\Omega \to \R^r$ be a $\cC^p$ centered Gaussian field such that $\mj_k(f,\cdot):C_k[\Omega] \to \cMJ_k(\Omega,\R^r)$ is non-degenerate for all $k \in \ssquarebrackets{1}{p}$.

\paragraph{Step 1: Gaussianity and non-degeneracy of the multijets.}
Since $f$ is $\cC^p$, for all $k \in \ssquarebrackets{1}{p}$ we have $\mj_k(f,\cdot) \in \Gamma^1\parentheses*{\strut C_k[\Omega],\cMJ_k(\Omega,\R^r)}$ because of Condition~\ref{item: thm multijet mj} in Theorem~\ref{thm: multijet bundle}. Since $f$ is centered and Gaussian, so is any finite collection of jets of $f$. Then, for all $m \geq 1$ and all $z_1,\dots,z_m \in C_k[\Omega]$ we have that $\parentheses*{\mj_k(f,z_1),\dots,\mj_k(f,z_1)}$ is a centered Gaussian. Indeed, by Condition~\ref{item: thm multijet localness} in Theorem~\ref{thm: multijet bundle}, this is the image of a centered Gaussian by a linear map. Thus, $\mj_k(f,\cdot)$ is a non-degenerate $\cC^1$ centered Gaussian field on $C_k[\Omega]$ with values in $\cMJ_k(\Omega,\R^r)$.

Let $z \notin \pi^{-1}(\Delta_p)$ and let $\ux=(x_1,\dots,x_p) = \pi(z)$. By Condition~\ref{item: thm multijet localness} in Theorem~\ref{thm: multijet bundle}, the map $\Theta_z$ is a linear surjection. A dimension argument, shows that it is actually a bijection. Thus
\begin{equation*}
\parentheses*{\strut f(x_1),\dots,f(x_p)} = \parentheses*{\strut \jet_0(f,x_1),\dots,\jet_0(f,x_p)} = \Theta_z^{-1}\parentheses*{\mj_p(f,z)},
\end{equation*}
which proves that $\parentheses*{\strut f(x_1),\dots,f(x_p)}$ is non-degenerate. Thus, the hypotheses of Proposition~\ref{prop: relation between moments measures and densities} are satisfied, and the four statements appearing in this proposition are indeed equivalent.

\paragraph{Step 2: Comparing zeros of \texorpdfstring{$f$ and $\mj_k(f,\cdot)$}{}.}
Let $k \in \ssquarebrackets{1}{p}$, in the following we are going to prove that $\esp{\nu^{[k]}}$ is a Radon measure on~$\Omega^k$, which is enough to conclude the proof. In the following, we say that a subset of $C_k[\Omega]$ (resp.~$\Omega^k$) is negligible if its $k(n-r)$-dimensional Hausdorff measure is $0$.

Let us consider the Gaussian field $\mj_k(f,\cdot):C_k[\Omega] \to \cMJ_k(\Omega,\R^r)$. We have checked above that it satisfies the hypotheses of Proposition~\ref{prop: weak Bulinskaya}. Let $X \subset C_k[\Omega]$ denote the zero set of $\mj_k(f,\cdot)$. As in Section~\ref{subsec: Bulinskaya lemma and Kac--Rice formula for the expectation}, we define $X_\text{sing} = \brackets*{\strut z \in C_k[\Omega] \mvert \mj_k(f,x)=0 \ \text{and} \ \jac{\nabla_z \mj_k(f,\cdot)}=0}$ and $X_\text{reg} = X \setminus X_\text{sing}$. Recall that $X_\text{reg}$ is a $\cC^1$ submanifold of codimension $kr$ and that $X_\text{sing}$ is almost surely negligible by Proposition~\ref{prop: weak Bulinskaya}. Let $Y = X \cap \pi^{-1}\parentheses*{\Omega^k \setminus \Delta_k}$, we also let $Y_\text{sing}=Y \cap X_\text{sing}$ and $Y_\text{reg} = Y \cap X_\text{reg}$.

Recalling that $Z = f^{-1}(0) \subset \Omega$, for all $z \in C_k[\Omega]$ we have $z \in Y$ if and only if $\pi(z) \in Z^k \setminus \Delta_k$; see Condition~\ref{item: thm multijet ev} in Theorem~\ref{thm: multijet bundle}. By Condition~\ref{item: thm multijet pi} in the same theorem, the restriction of $\pi$ to $\pi^{-1}\parentheses*{\Omega^k \setminus \Delta_k}$ is a diffeomorphism. Hence $\pi(Y) = Z^k \setminus \Delta_k$, the set $\pi(Y_\text{reg})$ is a $\cC^1$ submanifold of $\Omega^k \setminus \Delta_k$, and $\pi(Y_\text{sing})$ is almost surely negligible. Since $Z^k \setminus \parentheses*{Z_\text{reg}}^k$ is also almost surely negligible, the submanifolds $Z_\text{reg}^k \setminus \Delta_k$ and $\pi(Y_\text{reg})$ are almost surely the same, up to a negligible set (actually the reader can check that $\pi(Y_\text{reg}) = Z_\text{reg}^k \setminus \Delta_k$ using the trivialization $\tau$ introduced at the end of Section~\ref{subsec: Definition of the bundle MPjRn}). Recalling Definition~\ref{def: product measures}, this shows that $\nu^{[k]}$ is the same as the integral over $\pi(Y_\text{reg})$ with respect to the Riemannian volume $\dx \Vol_{\pi(Y)}$ induced by the Euclidean metric on~$(\R^n)^k$. At this stage we know that, almost surely,
\begin{equation}
\label{eq: nu k is pi(Y)}
\forall \Phi \in L^\infty_c\parentheses*{\Omega^k}, \qquad \prsc*{\nu^{[k]}}{\Phi} = \int_{\pi(Y_\text{reg})} \Phi(\ux) \dx \Vol_{\pi(Y)}(\ux).
\end{equation}

\paragraph{Step 3: Comparing volumes.}
Let us introduce a Riemannian metric $g$ on $C_k[\Omega]$. It induces a volume measure $\dx \Vol_X$ on $X_\text{reg}$, and hence on $Y_\text{reg}$. Besides, let $z \in C_k[\Omega]$ and let $G \subset T_z\parentheses*{C_k[\Omega]}$ be a vector subspace of codimension $kr$. We define $J(G) = \det\parentheses*{\parentheses*{D_z\pi_{\vert G}}^* D_z\pi_{\vert G}}^\frac{1}{2}$, where the adjoint of $D_z\pi_{\vert G}$ is computed with respect to $g$ on $G$ and to the Euclidean metric on $(\R^n)^k$. Since $\pi$ is smooth, this defines a smooth non-negative function $J$ on the total space of the Grassmannian bundle $\gr[\strut T \parentheses*{C_k[\Omega]}]{kr} \to C_k[\Omega]$ of subspaces of codimension $kr$ in the tangent of $C_k[\Omega]$. Our interest in this map is that if $z \in Y_\text{reg}$ and $G = T_zY_\text{reg}$ then $J(G)$ is the Jacobian determinant of $D_z(\pi_Y)$, where the $\cC^\infty$-diffeomorphism $\pi_Y:Y_\text{reg} \to \pi(Y_\text{reg})$ is the restriction of $\pi$ on both sides.

Let $K \subset \Omega^k$ be compact and let us apply Equation~\eqref{eq: nu k is pi(Y)} to $\one_K$. Using the previous notation, the change of variables $\pi_Y$ yields:
\begin{equation*}
\prsc*{\nu^{[k]}}{\one_K} = \int_{Y_\text{reg}} \one_K(\pi(z)) J(T_zY_\text{reg}) \dx \Vol_X(z) \leq \int_{X_\text{reg}} \one_{\pi^{-1}(K)}(z) J(T_zX_\text{reg}) \dx \Vol_X(z).
\end{equation*}
By Condition~\ref{item: thm multijet pi} in Theorem~\ref{thm: multijet bundle}, the projection $\pi$ is proper; hence $\tilde{K}=\pi^{-1}(K)$ is compact. Since the bundle $\gr[\strut T \parentheses*{C_k[\Omega]}]{kr} \to C_k[\Omega]$ has compact fiber, its restriction over $\tilde{K} \subset C_k[\Omega]$ is compact. By continuity, the function $J$ is bounded on this compact set by some constant $C_K$. Finally, we have proved that, almost surely,
\begin{equation*}
\prsc*{\nu^{[k]}}{\one_K} \leq C_K \prsc*{\tilde{\nu}}{\one_{\tilde{K}}},
\end{equation*}
where $\tilde{\nu}$ is defined by integrating over $X_\text{reg}$ with respect to $\dx \Vol_X$. Taking expectation on both sides we get $\prsc*{\esp{\nu^{[k]}}}{\one_K} \leq C_K \prsc*{\esp{\tilde{\nu}}}{\one_{\tilde{K}}}$.

\paragraph{Step 4: Applying the Kac--Rice formula to multijets.}
Now, $X_\text{reg}$ is the regular part of the zero set $X$ of the Gaussian field $\mj_k(f,\cdot)$. We have checked at the beginning of the proof that $\mj_k(f,\cdot)$ satisfies the hypotheses of Proposition~\ref{prop: Kac-Rice expectation}. This Proposition yields that $\esp{\tilde{\nu}}$ is a Radon measure on $C_k[\Omega]$. Hence $\prsc*{\esp{\tilde{\nu}}}{\one_{\tilde{K}}}$ is finite, and so is $\prsc*{\esp{\nu^{[k]}}}{\one_K}$. Thus $\esp{\nu^{[k]}}$ is Radon on $\Omega^k$, which concludes the proof.
\end{proof}

We can now prove Theorem~\ref{thm: moments M}, which gives a criterion for the finiteness of the $p$-th moments of the linear statistics associated with a centered Gaussian field $s:M \to E$, where $E\to M$ is some vector bundle of rank $r$ over a Riemannian manifold $(M,g)$ without boundary of dimension $n \geq r$. The idea of the proof is to patch together the local results obtained by applying Theorem~\ref{thm: moments Rn local} in nice local trivializations.

\begin{proof}[Proof of Theorem~\ref{thm: moments M}]
Let $p \geq 1$ and let $s \in \Gamma^p(M,E)$ be a centered Gaussian field which is $\cC^p$ and $(p-1)$-non-degenerate.

\paragraph{Step 1: Existence of nice local trivializations.}
Let $x_0 \in M$, there exists an open neighborhood~$U$ of~$x_0$ and a local trivialization of $E$ over $U$ given by Lemma~\ref{lem: local picture}. In this trivialization, the Gaussian section $s$ corresponds to a centered Gaussian field $f:\Omega \to \R^r$ which is $\cC^p$ and $(p-1)$-non-degenerate. We denote by $x \in \Omega$ the image of $x_0$ in local coordinates.

Let $k \in \ssquarebrackets{1}{p}$ and let $\ux=(x,\dots,x) \in \Omega^k$. Since $\jet_{p-1}(f,x)$ is non-degenerate so is $\jet_{k-1}(f,x)$; see Definition~\ref{def: p non degeneracy}. Then, by Condition~\ref{item: thm multijet localness} in Theorem~\ref{thm: multijet bundle}, for all $z \in \pi^{-1}(\brackets{\ux}) \subset C_k[\Omega]$ the Gaussian vector $\mj_k(f,z) = \Theta_z(\jet_{k-1}(f,x)) \in \cMJ_k(\Omega,\R^r)_z$ is non-degenerate. By Condition~\ref{item: thm multijet pi} in Theorem~\ref{thm: multijet bundle}, the map $\pi$ is proper; hence $\pi^{-1}(\brackets{\ux})$ is compact. One the other hand, since $f$ is $\cC^p$, we know that $\mj_k(f,\cdot)$ is at least $\cC^1$. Thus $z \mapsto \det \var{\mj_k(f,z)}$ is continuous on $C_k[\Omega]$ and positive on the compact $\pi^{-1}(\brackets{\ux})$, and hence on some neighborhood $V_k$ of $\pi^{-1}(\brackets{\ux})$ in $C_k[\Omega]$.

Up to reducing $V_k$ we can assume that $V_k = \pi^{-1}(W_k)$, where $W_k$ is an open neighborhood of $\ux$ in $\Omega^k$. Otherwise, there would exist a sequence $\parentheses*{z_n}_{n \in \N} \in C_k[\Omega] \setminus V_k$ such that $\pi(z_n) \xrightarrow[n \to +\infty]{}\ux$. By properness of $\pi$, up to extracting a subsequence, we could assume that $z_n \xrightarrow[n \to +\infty]{}z$. By continuity $z \in \pi^{-1}(\brackets{\ux})$, which would be absurd. Since $W_k$ is a neighborhood of $\ux$ in $\Omega^k$, there exists an open neighborhood $\Upsilon_k$ of $x$ in $\Omega$ such that $(\Upsilon_k)^k \subset W_k$.

Let us define $\Upsilon = \cap_{k=1}^p \Upsilon_k$, which is an open neighborhood of $x$. For all $k \in \ssquarebrackets{1}{p}$, we have $C_k[\Upsilon] =\pi^{-1}(\Upsilon^k) \subset \pi^{-1}\parentheses*{(\Upsilon_k)^k} \subset \pi^{-1}(W_k)=V_k$ and $\mj_k(f,\cdot)$ is non-degenerate on $C_k[\Upsilon]$. Thus, up to replacing $\Omega$ by the smaller neighborhood $\Upsilon$ of $x$ in $\Omega$ and replacing $U$ by the corresponding neighborhood of $x_0$ on $M$, we can assume that the local trivialization given by Lemma~\ref{lem: local picture} is such that, for all $k \in \ssquarebrackets{1}{p}$, the Gaussian field $\mj_k(f,\cdot):C_k[\Omega] \to \cMJ_k(\Omega,\R^r)$ is non-degenerate.

\paragraph{Step 2: Reduction to the local case.}
Let $\phi \in L^\infty_c(M)$ and let $K$ denote its support. By compactness, there exists a finite family $\parentheses*{U_i}_{1=1}^m$ of open subsets such that $K \subset \bigcup_{i=1}^m U_i$ and each $U_i$ is the domain of nice trivialization of the type we built in the previous paragraph. Letting $U_0=M \setminus K$, there exists a smooth partition of unity $\parentheses*{\chi_i}_{i=0}^m$ subordinated to the open covering $\parentheses*{U_i}_{1=0}^m$ of $M$. Then $\phi = \sum_{i=1}^m \chi_i \phi$ by the definition of $K$.

Recall that $\nu$ is the measure from Definition~\ref{def: nu}. We have $\norm*{\prsc*{\nu}{\phi}} \leq \prsc*{\nu}{\norm{\phi}} = \sum_{i=1}^m \prsc{\nu}{\phi_i}$, where $\phi_i = \chi_i\norm{\phi}$ for all $i \in \ssquarebrackets{1}{m}$. Let $p \geq 1$, by Hölder's inequality we get:
\begin{align*}
\esp{\strut\norm*{\prsc*{\nu}{\phi}}^p} &\leq \esp{\parentheses*{\sum_{i=1}^m \prsc{\nu}{\phi_i}}^p} = \sum_{1 \leq i_1,\dots,i_p \leq m} \esp{\prod_{j=1}^p \prsc*{\nu}{\phi_{i_j}}} \leq \sum_{1 \leq i_1,\dots,i_p \leq m} \prod_{j=1}^p \esp{\prsc*{\nu}{\phi_{i_j}}^p}^\frac{1}{p}\\
&\leq m^p \max_{1 \leq i \leq m} \esp{\prsc*{\nu}{\phi_i}^p}.
\end{align*}
Thus, in order to prove Theorem~\ref{thm: moments M}, it is enough to prove that $\esp{\prsc*{\nu}{\phi}^p} < +\infty$ for any non-negative $\phi \in L^\infty_c(M)$ whose support is included in the domain of a nice trivialization.

\paragraph{Step 3: Local case.}
Let $U \subset M$ be an open subset over which we have a nice trivialization of $E$ and~$s$. That is, $U$ is as in Lemma~\ref{lem: local picture}, the section $s$ reads as $f:\Omega \to \R^r$ in local coordinates, and in addition we can assume that for all $k \in \ssquarebrackets{1}{p}$ the field $\mj_k(f,\cdot)$ is non-degenerate on $C_k[\Omega]$. Identifying objects on $U$ with their image in the local trivialization, we reduced our problem to proving that $\esp{\prsc{\nu}{\phi}^p}<+\infty$ for all non-negative $\phi \in L^\infty_c(\Omega)$. Note that $\nu$ is the measure of integration over $Z_\text{reg}$ with respect to the Riemannian volume measure $\dx \Vol_Z$ induced by the metric $g$. In order to apply Theorem~\ref{thm: moments Rn local}, we need to compare $\nu$ with $\tilde{\nu}$, which is the measure of integration over $Z_\text{reg}$ with respect to the Euclidean volume measure $\dx \Vol_Z^0$.

Let $\phi \in L^\infty_c(\Omega)$ be non-negative and let $K$ denote its compact support. Recalling Definition~\ref{def: Riemannian densities}, Lemma~\ref{lem: comparing volumes} shows that:
\begin{equation*}
\prsc{\nu}{\phi} = \int_{Z_\text{reg}} \phi(x) \dx\Vol_Z(x) = \int_{Z_\text{reg} \cap K} \phi(x)\gamma_r(x,\ker D_xf) \dx\Vol_Z^0(x).
\end{equation*}
Since $\gamma_r$ is continuous and $K \times \gr[\R^n]{r}$ is compact, the non-negative function $\gamma_r$ is bounded by some $C_K>0$ on this set. Thus $\prsc*{\nu}{\phi} \leq C_K \prsc*{\tilde{\nu}}{\phi}$. Since $f:\Omega \to \R^r$ satisfies the hypotheses of Theorem~\ref{thm: moments Rn local} and $\tilde{\nu}$ is the measure of integration over its zero set induced by the Euclidean metric, we have $\esp{\prsc*{\nu}{\phi}^p} \leq C_K^p \esp{\prsc{\tilde{\nu}}{\phi}^p} <+\infty$.
\end{proof}

We conclude this section with the proof of Theorem~\ref{thm: moments Rn}, which is a corollary of Theorem~\ref{thm: moments M}.

\begin{proof}[Proof of Theorem~\ref{thm: moments Rn}]
Let $f:\Omega \to \R^r$ be a centered Gaussian field which is $\cC^p$ and $(p-1)$-non-degenerate in the sense of Definition~\ref{def: p non degeneracy}. Then $s=(f,\Id)$ is a random section of the trivial bundle $\R^r \times \Omega \to \Omega$. This $s$ is also a $\cC^p$ and $(p-1)$-non-degenerate centered Gaussian field. Its vanishing locus (as a section) is the same as the vanishing locus of $f$. Hence, the result follows from applying Theorem~\ref{thm: moments M} to $s$.
\end{proof}


\section{Multijets adapted to a differential operator}
\label{sec: Multijets adapted to a differential operator}

In Theorem~\ref{thm: multijet bundle} we defined multijets such that, over the configuration space $\config \subset C_p[\R^n]$, the $p$-multijet $\mj_p(f,\ux)$ reads as $\parentheses{f(x_1),\dots,f(x_p)}$ in the natural trivialization $\tau$ (see the end of Section~\ref{subsec: Definition of the bundle MPjRn}). Thus $\mj_p(f,\ux)$ is a way to patch together the $0$-jets of $f$ at $x_i$ into a smooth object that does not degenerate along $\Delta_p$. In this section, we explain how a similar construction allows us to build a multijet that patches together the $k$-jets of $f$ at $x_i$, and more generally the values at $x_i$ of~$\cD f$, where $\cD$ is a differential operator. In Section~\ref{subsec: Differential operator} we recall the definition of a differential operator. Then we define a multijet adapted to a given differential operator in Section~\ref{subsec: Multijets adapted to D}. Finally, in Section~\ref{subsec: Finiteness of moments for critical points}, we prove Theorem~\ref{thm: moments critical points}.


\subsection{Differential operator}
\label{subsec: Differential operator}

In this section, we recall a few fact about differential operators. In the following, we use the multi-index notation introduced in Section~\ref{subsec: Spaces of functions, sections and jets}.

\begin{dfn}[Differential operator]
\label{def: differential operator}
Let $\Omega \subset \R^n$ be open, let $q,r \geq 1$ and $d \geq 0$. A \emph{differential operator of order at most $d$} is a linear map $\cD:\cC^d(\Omega,\R^q) \to \cC^0(\Omega,\R^r)$ such that there exist continuous functions $\parentheses*{a_{ij\alpha}}_{1 \leq i \leq r; 1 \leq j \leq q; \norm{\alpha}\leq d}$ on $\Omega$ such that for all $f=(f_1,\dots,f_q)\in \cC^d(\Omega,\R^q)$,
\begin{equation}
\label{eq: differential operator}
\cD(f):x \longmapsto \parentheses*{\sum_{j=1}^q \sum_{\norm{\alpha}\leq d} a_{ij\alpha}(x) \partial^\alpha f_j(x)}_{1 \leq i \leq r}.
\end{equation}
More generally, let $M$ be a manifold of dimension $n$ and let $E \to M$ and $F\to M$ be two vector bundles of ranks $q$ and $r$ respectively, we say that $\cD:\Gamma^d(M,E) \to \Gamma^0(M,F)$ is a \emph{differential operator of order at most $d$} if around any point $x \in M$ there exist a chart and local trivializations of $E$ and $F$ such that $\cD$ is of the form~\eqref{eq: differential operator} in the corresponding local coordinates. We say that $\cD$ is of \emph{order $d \in \N$} if it is of order at most $d$ and not of order at most $d-1$. If $s \in \Gamma^d(M,E)$ and $x \in M$, we write $\cD s = \cD(s)$ and $\cD_x s=\cD(s)(x)$ for simplicity.
\end{dfn}

\begin{rem}
\label{rem: differential operator}
Let us make some important comments.
\begin{itemize}
\item If $\cD:\Gamma^d(M,E) \to \Gamma^0(M,F)$ is a differential operator of order at most $d$, then it is of the form~\eqref{eq: differential operator} in any set of local coordinates on $M$, $E$ and $F$.
\item An equivalent definition of a differential operator of order at most $d$ is that it factors linearly through the bundle of $d$-jets. That is, there exists $L \in \Gamma^0\parentheses*{M,\cJ_d(M,E)^* \otimes F}$ such that $\cD_x s = L(x)\jet_d(s,x) \in F_x$ for all $s \in \Gamma^d(M,E)$ and $x \in M$.
\end{itemize}
\end{rem}

In the following we always assume that $M$, $E$, $F$ and $L$ are smooth. In particular, the functions $\parentheses*{a_{ij\alpha}}$ appearing in the local expression~\eqref{eq: differential operator} of $\cD$ are smooth. This implies that if $s \in \Gamma^{d+l}(M,E)$ then $\cD s \in \Gamma^{l}(M,F)$.

\begin{ex}
\label{ex: differential operator}
The main examples we have in mind are the following.
\begin{itemize}
\item The differential $D:\cC^1(M) \to \Gamma^0(M,T^*M)$ is a differential operator of order $1$.
\item For all $k \in \N$, the jet map $\jet_k:\Gamma^k(M,E) \to \Gamma^0\parentheses*{M,\cJ_k(M,E)}$ is a differential operator of order $k$ corresponding to $L(x)$ being the identity of $\cJ_k(M,E)_x$ for all $x \in M$.
\item If $M$ is equipped with a Riemannian metric, the Laplace--Beltrami operator $\Delta$ acting on $\cC^2(M)$ is a differential operator of order $2$.
\item If $\nabla$ is a connection on $E \to M$ then $\nabla:\Gamma^1(M,E) \to \Gamma^0\parentheses*{M,T^*M \otimes E}$ is a differential operator of order $1$. Indeed, in a local frame $(e_1,\dots,e_q)$ of $E$ and local coordinates $(x_1,\dots,x_n)$ on $M$ the covariant derivative of $s= \sum_{j=1}^q f_j e_j \in \Gamma^1(M,E)$ at $x$ is given by:
\begin{equation*}
\nabla_x s = \sum_{i=1}^n\sum_{j=1}^q \parentheses*{\strut \partial_i f_j(x)+ \sum_{k=1}^q\mu_{ijk}(x)f_k(x)} dx_i \otimes e_j(x),
\end{equation*}
where the $\parentheses*{\mu_{ijk}}$ are defined by the relations $\nabla e_k = \sum_{i=1}^n\sum_{j=1}^q \mu_{ijk} dx_i \otimes e_j$ for all $k \in \ssquarebrackets{1}{q}$.
\end{itemize}
\end{ex}


\subsection{Multijets adapted to \texorpdfstring{$\cD$}{}}
\label{subsec: Multijets adapted to D}

The purpose of this section is to explain how to modify the construction of Section~\ref{sec: Definition of the multijet bundles} in order to define a multijet bundle adapted to a given differential operator.

Let $n,q$ and $r \geq 1$, we consider a differential operator $\cD:\cC^d(\R^n,\R^q) \to \cC^0(\R^n,\R^r)$ of order~$d$. As in Remark~\ref{rem: differential operator}, there exists a section $L$ of $\cJ_d(\R^n,\R^q)^* \otimes \R^r$ such that for any $f \in \cC^d(\R^n,\R^q)$ and $x \in \R^n$, we have $\cD_x f = L(x)\jet_d(f,x)$. We assume that for all $x \in \R^n$ the linear map $\cD_x :\cC^d(\R^n,\R^q) \to \R^r$ is surjective, which is equivalent to $L(x):\cJ_d(\R^n,\R^q)_x \to \R^r$ being surjective. Moreover, we assume that $L$ is smooth. In this context, we have the following analogue of Theorem~\ref{thm: multijet bundle}. It holds in particular if $\cD = \jet_k$ or $\cD =D$ is the differential.

\begin{thm}[Existence of multijets adapted to \texorpdfstring{$\cD$}{}]
\label{thm: multijet bundle D}
Let $\cD:\cC^d(\R^n,\R^q) \to \cC^0(\R^n,\R^r)$ be a differential operator of order $d$ as above. Let $p \geq 1$, there exist a smooth manifold $C_p^\cD[\R^n]$ of dimension $np$ without boundary and a smooth vector bundle $\cMJ_p^\cD(\R^n) \to C_p^\cD[\R^n]$ of rank $rp$ with the following properties.

\begin{enumerate}

\item \label{item: thm multijet D pi} There exists a smooth proper surjection $\pi:C_p^\cD[\R^n] \to (\R^n)^p$ such that $\pi^{-1}\parentheses*{\config}$ is a dense open subset of $C_p^\cD[\R^n]$, and $\pi$ restricted to $\pi^{-1}\parentheses*{\config}$ is a $\cC^\infty$-diffeomorphism onto $\config$.

\item \label{item: thm multijet D mj} There exists a map $\mj_p^\cD:\cC^{(d+1)p-1}(\R^n,\R^r) \times C_p^\cD[\R^n] \to \cMJ_p^\cD(\R^n)$ such that:
\begin{itemize}
\item for all $z \in C_p^\cD[\R^n]$, the map $\mj_p^\cD(\cdot,z):\cC^{(d+1)p-1}(\R^n,\R^r) \to \cMJ_p^\cD(\R^n)_z$ is surjective;
\item for all $f \in \cC^{l+(d+1)p-1}(\R^n,\R^r)$, the section $\mj_p^\cD(f,\cdot)$ of $\cMJ_p^\cD(\R^n) \to C_p^\cD[\R^n]$ is~$\cC^l$.
\end{itemize}

\item \label{item: thm multijet D ev} Let $z \in C_p^\cD[\R^n]$ be such that $\pi(z)=(x_1,\dots,x_p) \notin \Delta_p$, then for all $f \in \cC^{(d+1)p-1}(\R^n,\R^q)$:
\begin{equation*}
\mj_p^\cD(f,z) = 0 \iff \forall i \in \ssquarebrackets{1}{p},\ \cD_{x_i}f=0.
\end{equation*}

\item \label{item: thm multijet D localness} Let $z \in C_p^\cD[\R^n]$, let $\cI = \cI\parentheses{\pi(z)}$ and let $\parentheses{y_I}_{I \in \cI} = \iota_{\cI}^{-1}\parentheses{\pi(z)} \in (\R^n)^{\cI} \setminus \Delta_{\cI}$, there exists a linear surjection $\Theta_z^\cD : \prod_{I \in \cI} \cJ_{(d+1)\norm{I}-1}(\R^n,\R^q)_{y_I} \to \cMJ_p^\cD(\R^n)_z$ such that
\begin{equation*}
\forall f \in \cC^{(d+1)p-1}(\R^n,\R^r), \qquad \mj_p^\cD(f,z) = \Theta_z^\cD\parentheses*{\parentheses*{\jet_{(d+1)\norm{I}-1}(f,y_I)}_{I \in \cI}}.
\end{equation*}

\end{enumerate}

\end{thm}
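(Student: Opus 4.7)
The plan is to mirror the construction of Theorem~\ref{thm: multijet bundle} from Sections~\ref{sec: Evaluation maps and their kernels}--\ref{sec: Definition of the multijet bundles}, with three key modifications. First, replace the polynomial space $\R_{p-1}[X]$ by $R := \R_{(d+1)p-1}[X] \otimes \R^q$. Second, replace the evaluation map $\ev_{\ux}$ by the $\cD$-evaluation $\ev^\cD_{\ux}: R \to (\R^r)^p$, $P \mapsto (\cD_{x_i}P)_{i=1}^p$. Third, replace Kergin interpolation $K(f, \ux)$ by the \emph{thickened} Kergin $K(f, \tilde{\ux})$, where $\tilde{\ux} \in (\R^n)^{(d+1)p}$ is obtained from $\ux$ by repeating each $x_i$ exactly $d+1$ times; by Remark~\ref{rem: Kergin isomorphism}, $K(f, \tilde{\ux})$ interpolates the full $d$-jet of $f$ at each $x_i$ and hence satisfies $\cD_{x_i} K(f, \tilde{\ux}) = \cD_{x_i} f$ for all~$i$. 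This identity, combined with pointwise surjectivity of~$L$, yields the analogue of Lemma~\ref{lem: non-degeneracy of evx}: for any $\ux \in \config$, the map $\ev^\cD_{\ux}: R \to (\R^r)^p$ is surjective.

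The principal technical obstacle is that the kernel $\cG^\cD(\ux) := \ker \ev^\cD_{\ux}$ depends only \emph{smoothly}, not algebraically, on $\ux$ because $L$ has smooth coefficients; Hironaka's theorem cannot be applied directly to its graph closure as in Proposition~\ref{prop: resolution of singularities}. To circumvent this, I would first treat the universal operator $\jet_d: \cC^d(\R^n, \R^q) \to \cC^0(\R^n, V)$ with $V := \R_d[X] \otimes \R^q$, for which the analogous kernels $\cG^{\jet_d}_I(\ux) := \ker \ev^{\jet_d}_{\ux_I}$ depend algebraically on $\ux$. The graph-closure construction of Section~\ref{subsec: Definition of the basis CpRn} then applies, and Hironaka's theorem yields a smooth manifold $C_p^\cD[\R^n]$ of dimension~$np$ (depending only on $n, p, q, d$) together with smooth extensions of all the $\cG^{\jet_d}_I$. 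Condition~\ref{item: thm multijet D pi} is then immediate.

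To incorporate~$\cD$, I would define on the open dense subset $\pi^{-1}\parentheses*{\config}$
\begin{equation*}
\cG^\cD(z) := \parentheses*{\ev^{\jet_d}_{\pi(z)}}^{-1}\parentheses*{\bigoplus_{i=1}^p \ker L\parentheses*{\pi(z)_i}},
\end{equation*}
a smooth subbundle of $R \times \pi^{-1}\parentheses*{\config}$ of constant codimension~$rp$ by pointwise surjectivity of~$L$. The crucial verification is that this subbundle extends smoothly across $\pi^{-1}(\Delta_p)$ with constant codimension: in local Hironaka coordinates both the extended $\ev^{\jet_d}$ and $L \circ \pi$ are smooth, so a smooth constant-rank family pulled back by a smooth bundle surjection remains smooth and of constant rank. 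Setting $\cMJ_p^\cD(\R^n) := (R \times C_p^\cD[\R^n])/\cG^\cD$ of rank~$rp$ and $\mj_p^\cD(f, z) := K(f, \tilde{\pi(z)}) \mod \cG^\cD(z)$, Conditions~\ref{item: thm multijet D mj}--\ref{item: thm multijet D ev} follow respectively from Lemma~\ref{lem: regularity of the Kergin polynomial} applied at~$\tilde{\ux}$, from the surjectivity of thickened Kergin onto~$R$, and from the identity $\cD_{x_i} K(f, \tilde{\ux}) = \cD_{x_i} f$.

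The hardest step is Condition~\ref{item: thm multijet D localness}. At a clustered $\pi(z) = \iota_\cI(\uy)$, each $y_I$ appears with multiplicity $\norm{I}$ in $\pi(z)$ and hence with multiplicity $(d+1)\norm{I}$ in $\tilde{\pi(z)}$. By the analogue of Lemma~\ref{lem: compatibility Kergin interpolation} applied at the thickened configuration, $K(f, \tilde{\pi(z)})$ depends on $f$ only through its $\parentheses*{(d+1)\norm{I}-1}$-jet at $y_I$ for each $I \in \cI$. This yields the factorization $\mj_p^\cD(f, z) = \Theta_z^\cD\parentheses*{\parentheses*{\jet_{(d+1)\norm{I}-1}(f, y_I)}_{I \in \cI}}$; surjectivity of $\Theta_z^\cD$ then follows by combining the surjectivity of the thickened Kergin map on~$R$ with that of the quotient by $\cG^\cD(z)$.
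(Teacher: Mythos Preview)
Your core ingredients --- thickened Kergin interpolation at $\hat{\ux} \in (\R^n)^{(d+1)p}$ (each $x_i$ repeated $d{+}1$ times), the $\cD$-evaluation $\ev^\cD_{\ux}: P \mapsto (\cD_{x_i} P)_i$ on $R = \R_{(d+1)p-1}[X] \otimes \R^q$, and the kernels $\cG^\cD_I = \ker \ev^\cD_{\ux_I}$ --- are exactly those of the paper. The paper, however, takes the direct route: it forms the graph $\Sigma_\cD$ of $(\cG^\cD_I)_{I}$ over $\config$, takes its closure $\bar{\Sigma_\cD}$, and resolves its singularities by Hironaka, with no $\jet_d$ intermediary.

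Your concern that $\cG^\cD$ need not be algebraic when $L$ is merely smooth is legitimate, but your workaround has a gap. You define $\cG^\cD(z) = (\ev^{\jet_d}_{\pi(z)})^{-1}\bigl(\bigoplus_i \ker L(\pi(z)_i)\bigr)$ and argue it extends because $\ev^{\jet_d}$ is a ``smooth bundle surjection''. It is not: on $\pi^{-1}(\Delta_p)$ the linear map $\ev^{\jet_d}_{\pi(z)}: R \to V^p$ drops rank --- that loss of surjectivity is precisely the singularity the compactification is built to handle, and after resolution only the \emph{kernel} $\cG^{\jet_d}$ extends as a constant-rank subbundle, not the evaluation as a surjection. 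Consequently your preimage has dimension strictly larger than $\dim R - rp$ on the exceptional locus and does not define a subbundle there. The $\jet_d$ detour therefore does not bypass the algebraicity issue; for the paper's intended applications ($\cD = D$ and $\cD = \jet_k$) the section $L$ is constant, so $\bar{\Sigma_\cD}$ is genuinely algebraic and the direct route applies. One smaller point: in your Condition~\ref{item: thm multijet D localness} argument, $K(f,\hat{\pi(z)})$ itself does \emph{not} depend only on the jets of $f$ at the $y_I$ (Kergin interpolation integrates over the whole simplex); what depends only on those jets is its class modulo $\cG^\cD(z)$, and establishing this requires the analogues of Lemmas~\ref{lem: splitting of multijets} and~\ref{lem: localness of multijets}, not Lemma~\ref{lem: compatibility Kergin interpolation} alone.
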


\begin{proof}
The proof follows the same strategy as what we did in Sections~\ref{sec: Evaluation maps and their kernels} and~\ref{sec: Definition of the multijet bundles} in order to prove Theorem~\ref{thm: multijet bundle}. Let us sketch its main steps.

Let $\ux =(x_1,\dots,x_p) \in (\R^n)^p$ and let $\hux = \parentheses{\ux,\dots,\ux} \in (\R^n)^{(d+1)p}$. Let $f \in \cC^{(d+1)p-1}(\R^n,\R^q)$, the polynomial map $K(f,\hux) \in \R_{(d+1)p-1}[X]\otimes \R^q$ is defined as in Definition~\ref{def: Kergin polynomial}. For all $i \in \ssquarebrackets{1}{p}$, since $x_i$ appears with multiplicity $(d+1)$ in $\hux$, the map $K(f,\hux)$ has the same $d$-jet as $f$ at $x_i$. Hence $\cD_{x_i}f = L(x_i)\jet_d(f,x_i) = L(x_i)\jet_d\parentheses*{K(f,\hux),x_i}= \cD_{x_i}\parentheses*{K(f,\hux)}$.

If $\ux \notin \Delta_p$, let us define $\ev_{\ux}^\cD:P \mapsto \parentheses*{\cD_{x_i}P}_{1 \leq i \leq p}$ from $\R_{(d+1)p-1}[X] \otimes \R^q$ to $(\R^r)^p$. Since we assumed that $L(x_i)$ is surjective for all $i \in \ssquarebrackets{1}{p}$, the previous interpolation result proves that $\ev_{\ux}^\cD$ is surjective. Then, as in Equation~\eqref{eq: def GI tilde}, for all non-empty $I \subset \ssquarebrackets{1}{p}$ we define
\begin{equation*}
\cG_I^\cD(\ux) = \ker \ev_{\ux_I}^\cD \in \gr[{\R_{(d+1)\norm{I}-1}[X]\otimes \R^q}]{r\norm{I}}.
\end{equation*}
We also define $\cG^\cD(\ux) = \cG^\cD_{\ssquarebrackets{1}{p}}(\ux)$.

Following the same strategy as in Section~\ref{sec: Definition of the multijet bundles}, we denote by $\Sigma_\cD$ the graph of $\parentheses*{\cG_I^\cD}_{I \subset \ssquarebrackets{1}{p}}$ defined on $\config$. We define $C_p^\cD[\R^n]$ as a resolution of the singularities of the algebraic variety
\begin{equation*}
\bar{\Sigma_\cD} \subset (\R^n)^p \times \prod_{\emptyset \neq I \subset \ssquarebrackets{1}{p}} \gr[{\R_{(d+1)\norm{I}-1}[X]\otimes \R^q}]{r\norm{I}}.
\end{equation*}
The manifold $C_p^\cD[\R^n]$ satisfies the analogue of Corollary~\ref{cor: existence of the basis CpRn}. In particular the maps $\cG_I^\cD$ with $I \subset \ssquarebrackets{1}{p}$ extend smoothly to $C_p^\cD[\R^n]$. Then we define the $p$-multijet bundle adapted to $\cD$ as:
\begin{equation*}
\cMJ_p^\cD(\R^n) = \parentheses*{\parentheses*{\R_{(d+1)p-1}[X] \otimes \R^q} \times C_p^\cD[\R^n]}/ \cG^\cD
\end{equation*}
over $C_p^\cD[\R^n]$, similarly to Definition~\ref{def: vector bundle of multijets}. Given a function $f \in \cC^{(d+1)p-1}(\R^n,\R^q)$, we define its $p$-multijet adapted to $\cD$ at $z \in C_p^\cD[\R^n]$ as:
\begin{equation*}
\mj_p^\cD(f,z) = K(f,\hat{\pi(z)}) \mod \cG^\cD(z).
\end{equation*}
Then, following the same steps as in Section~\ref{sec: Definition of the multijet bundles}, one can check that the objects we just defined satisfy the conditions in Theorem~\ref{thm: multijet bundle D}.
\end{proof}

As before, thanks to the localness condition in Theorem~\ref{thm: multijet bundle D} (Condition~\ref{item: thm multijet D localness}), the multijet $\mj_p^\cD(f,z)$ makes sense even if $f$ is only defined and $\cC^{(d+1)\norm{I}-1}$ near $y_I$, for all $I \in \cI(\pi(z))$. Hence, the following definition makes sense.

\begin{dfn}[Multijets adapted to $\cD$]
\label{def: multijets D}
Let $\Omega \subset \R^n$ be open. We define $C_p^\cD[\Omega] = \pi^{-1}(\Omega^p)$ and denote by $\cMJ_p^\cD(\Omega)$ the restriction of $\cMJ_p^\cD(\R^n)$ to $C_p^\cD[\Omega]$. We call $\cMJ_p^\cD(\Omega) \to C_p^\cD[\Omega]$ the \emph{bundle of $p$-multijets adapted to $\cD$}. Let $f:\Omega\to \R^q$ be of class $\cC^{(d+1)p-1}$, we call the section $\mj_p^\cD(f,\cdot)$ of $\cMJ_p^\cD(\Omega)$ the \emph{$p$-multijet of~$f$  adapted to $\cD$}.
\end{dfn}


\subsection{Finiteness of moments for critical points}
\label{subsec: Finiteness of moments for critical points}

The purpose of this section is to prove Theorem~\ref{thm: moments critical points}. More generally we prove an analogous result for the zero set of $\cD s$, where $s$ is a section of a vector bundle $E \to M$ and $\cD$ is a differential operator; see Theorem~\ref{thm: moments M D}. This is done by adapting what we did in Section~\ref{sec: Application to zeros of Gaussian fields} to this framework.

Let $\parentheses*{M,g}$ be Riemannian manifold of dimension $n \geq 1$ without boundary. Let $E \to M$ (resp.~$F \to M$) be a smooth vector bundle of rank $q \geq 1$ (resp.~$r \in \ssquarebrackets{1}{n}$). We consider a differential operator $\cD:\Gamma^d(M,E) \to \Gamma^0(M,F)$ of order $d \geq 0$, corresponding to a smooth section $L \in \Gamma^\infty(M,\cJ_d(M,E)^* \otimes F)$; see Remark~\ref{rem: differential operator}. Thanks to this smoothness assumption we have $\cD:\Gamma^{d+l}(M,E) \to \Gamma^l(M,F)$ for all $l \geq 0$. Finally we assume that $L(x):\cJ_d(M,E)_x \to F_x$ (or equivalently $\cD_x:\Gamma^d(M,E) \to F_x)$ is surjective for all $x \in M$.

Let $s:M \to E$ be a centered Gaussian field on $M$ with values in $E$ in the sense of Definition~\ref{def: Gaussian field}. We assume that $s$ is $\cC^{d+1}$ and $d$-non-degenerate, so that $\jet_d(s,\cdot)$ is a centered Gaussian field with values in $\cJ_d(M,E)$ which is $\cC^1$ and non-degenerate. Then $\cD s \in \Gamma^1(M,F)$ is a centered Gaussian field with values in $F$, which is non-degenerate because of the surjectivity assumption on~$L$. Everything we did in Sections~\ref{subsec: Bulinskaya lemma and Kac--Rice formula for the expectation} and~\ref{subsec: Factorial moment measures and Kac--Rice densities} applies to $\cD s$. In particular, $\cD s$ satisfies the weak Bulinskaya Lemma (see Proposition~\ref{prop: weak Bulinskaya}). Hence its vanishing locus is almost surely a union of codimension-$r$ submanifold of $M$ and a negligible singular set. We denote by $\nu_\cD$ the random Radon measure on $M$ defined by integrating over the zero set of $\cD s$. The formal definition is similar to Definition~\ref{def: nu}.

\begin{ex}
\label{ex: critical points}
Let us assume that $E = \R \times M$ is trivial. Then we can identify $\Gamma^1(M,E)$ with $\cC^1(M)$ and consider the differential $D:\cC^1(M) \to \Gamma^0\parentheses*{M,T^*M}$, which is a differential operator of order $1$. Let $f:M \to \R$ be a $\cC^2$ and $1$-non-degenerate centered Gaussian field. Then $D f$ is a non-degenerate $\cC^1$ centered Gaussian field on $M$ with values in $T^*M$, the vanishing locus of $D f$ is the set of critical points of $f$, and $\nu_D$ is the counting measure this random set.
\end{ex}

We can now state the analogue of Theorem~\ref{thm: moments Rn local} in this context, bearing in mind that Proposition~\ref{prop: relation between moments measures and densities} applies to $\nu_\cD$.

\begin{thm}[Finiteness of moments for \texorpdfstring{$\nu_\cD$}{}, local version]
\label{thm: moments D local}
Let $\Omega \subset \R^n$ be open and $f:\Omega \to \R^q$ be a centered Gaussian field. Let $r \in \ssquarebrackets{1}{n}$. Let $\cD:\cC^d(\Omega,\R^q) \to \cC^0(\Omega,\R^r)$ be a differential operator of order $d$ satisfying the previous hypotheses and $\nu_\cD$ denote the measure of integration over the zero set of $\cD f$. Let $p \geq 1$, if $f$ is $\cC^{(d+1)p}$ and the Gaussian field $\mj_k^\cD(f,\cdot):C_k^\cD[\Omega] \to \cMJ_k^\cD(\Omega)$ is non-degenerate for all $k \in \ssquarebrackets{1}{p}$, then the four equivalent statements in Proposition~\ref{prop: relation between moments measures and densities} hold for~$\nu_\cD$.
\end{thm}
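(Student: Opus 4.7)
The plan is to imitate the four-step proof of Theorem~\ref{thm: moments Rn local}, with $f$ replaced by $\cD f$, the multijet bundle $\cMJ_k(\Omega,\R^r)$ replaced by $\cMJ_k^\cD(\Omega)$, and the base $C_k[\Omega]$ replaced by $C_k^\cD[\Omega]$. The key observation that makes this adaptation work is that, outside the diagonal, the fiber $\cMJ_k^\cD(\R^n)_z$ is canonically identified with $(\R^r)^k$ via $\ev_{\pi(z)}^\cD$, under which $\mj_k^\cD(f,z)$ reads as $(\cD_{x_1}f,\dots,\cD_{x_k}f)$. This identification works exactly as in the discussion at the end of Section~\ref{subsec: Definition of the bundle MPjRn}: on the configuration space $\ev^\cD$ is a smooth surjective bundle map whose kernel is $\cG^\cD$, so it induces a smooth local trivialization $\tau^\cD:\cMJ_k^\cD(\R^n)_{\vert \Omega^k\setminus\Delta_k}\to (\R^r)^k\times(\Omega^k\setminus\Delta_k)$, and the interpolation property $\cD_{x_i}K(f,\hat{\pi(z)})=\cD_{x_i}f$ (since $K(f,\hat{\pi(z)})$ has the same $d$-jet as $f$ at each $x_i$) gives the reading of $\mj_k^\cD(f,z)$.

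The first step is to check that the hypotheses of Proposition~\ref{prop: relation between moments measures and densities} are satisfied for $\nu_\cD$ and that, for each $k\in\ssquarebrackets{1}{p}$, the field $\mj_k^\cD(f,\cdot)$ is a $\cC^1$ non-degenerate centered Gaussian field on $C_k^\cD[\Omega]$. Regularity: since $f\in\cC^{(d+1)p}\subset\cC^{(d+1)k}$, Condition~\ref{item: thm multijet D mj} of Theorem~\ref{thm: multijet bundle D} applied with $l=1$ gives $\mj_k^\cD(f,\cdot)\in\Gamma^1$. Gaussianity at finitely many points follows from Condition~\ref{item: thm multijet D localness} together with the fact that any finite collection of jets $\parentheses{\jet_{(d+1)\norm{I}-1}(f,y_I)}$ is a centered Gaussian vector. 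Non-degeneracy is the hypothesis. The identification above then shows that $(\cD_{x_1}f,\dots,\cD_{x_k}f)$ is the image of the non-degenerate Gaussian $\mj_k^\cD(f,z)$ by a linear isomorphism whenever $(x_1,\dots,x_k)\notin\Delta_k$, hence is itself non-degenerate. In particular $\cD f$ is a $\cC^1$ non-degenerate centered Gaussian field on $\Omega$, and the hypothesis of Proposition~\ref{prop: relation between moments measures and densities} for $\cD f$ holds for every $k\in\ssquarebrackets{1}{p}$.

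In the second step, fix $k\in\ssquarebrackets{1}{p}$ and, exactly as in Step 2 of the proof of Theorem~\ref{thm: moments Rn local}, let $X\subset C_k^\cD[\Omega]$ be the zero set of $\mj_k^\cD(f,\cdot)$ and $Y=X\cap\pi^{-1}(\Omega^k\setminus\Delta_k)$; by Condition~\ref{item: thm multijet D ev} of Theorem~\ref{thm: multijet bundle D}, $\pi$ maps $Y$ diffeomorphically onto $\tilde Z^k\setminus\Delta_k$ where $\tilde Z=(\cD f)^{-1}(0)$. By Proposition~\ref{prop: weak Bulinskaya} applied to $\mj_k^\cD(f,\cdot)$, the singular part is negligible, and one gets an identity of the form
\begin{equation*}
\prsc{\nu_\cD^{[k]}}{\Phi}=\int_{\pi(Y_\text{reg})}\Phi(\ux)\dx\Vol_{\pi(Y)}(\ux),
\end{equation*}
for every $\Phi\in L^\infty_c(\Omega^k)$, where $\dx\Vol_{\pi(Y)}$ is the Euclidean Riemannian volume on the codimension $rk$ submanifold $\pi(Y_\text{reg})\subset\Omega^k\setminus\Delta_k$.

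The third and fourth steps are the volume-comparison and Kac--Rice arguments, reproduced verbatim. Endowing $C_k^\cD[\Omega]$ with an auxiliary Riemannian metric, on any compact $K\subset\Omega^k$ the Jacobian of $\pi_{\vert Y_\text{reg}}$ is bounded above on the compact $\pi^{-1}(K)$ (using properness of $\pi$, Condition~\ref{item: thm multijet D pi}); hence $\prsc{\esp{\nu_\cD^{[k]}}}{\one_K}\leq C_K\prsc{\esp{\tilde\nu}}{\one_{\pi^{-1}(K)}}$, where $\tilde\nu$ is the integration measure on the regular zero set of $\mj_k^\cD(f,\cdot)$. By Proposition~\ref{prop: Kac-Rice expectation} applied to the non-degenerate $\cC^1$ centered Gaussian field $\mj_k^\cD(f,\cdot)$, $\esp{\tilde\nu}$ is Radon, so $\esp{\nu_\cD^{[k]}}$ is Radon on $\Omega^k$ for every $k\in\ssquarebrackets{1}{p}$, which is statement~\eqref{item: relations factorial moment measures} of Proposition~\ref{prop: relation between moments measures and densities}.

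The main (really the only) conceptual point is the identification $\cMJ_k^\cD(\R^n)_z\simeq(\R^r)^k$ for $z$ outside $\pi^{-1}(\Delta_k)$, which promotes the non-degeneracy hypothesis on $\mj_k^\cD$ into the non-degeneracy of $(\cD_{x_1}f,\dots,\cD_{x_k}f)$ needed to activate Proposition~\ref{prop: relation between moments measures and densities}; once this is in place, every other ingredient of the proof of Theorem~\ref{thm: moments Rn local} (weak Bulinskaya on $C_k^\cD[\Omega]$, the proper surjection $\pi$, the Kac--Rice formula for expectations) carries over with purely notational changes.
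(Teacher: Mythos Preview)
Your proposal is correct and follows exactly the approach indicated by the paper, which simply notes that $\mj_k^\cD(f,\cdot)$ is at least $\cC^1$ under the hypotheses and then refers back to the proof of Theorem~\ref{thm: moments Rn local}. You have faithfully spelled out the four steps of that proof in the $\cD$-adapted setting, including the crucial trivialization $\tau^\cD$ over the configuration space that transfers non-degeneracy of $\mj_k^\cD(f,z)$ to non-degeneracy of $(\cD_{x_1}f,\dots,\cD_{x_k}f)$.
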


\begin{proof}
Under these hypotheses, for all $k \in \ssquarebrackets{1}{p}$ the Gaussian field $\mj_k^\cD(f,\cdot)$ is at least $\cC^1$. Then the proof is the same as that of Theorem~\ref{thm: moments Rn local}.
\end{proof}

\begin{thm}[Finiteness of moments for zeros of \texorpdfstring{$\cD s$}{}]
\label{thm: moments M D}
In the setting introduced at the beginning of this section, let $s:M \to E$ be a centered Gaussian field and $\nu_\cD$ denote the measure of integration over the zero set of $\cD s$. Let $p\geq 1$, if~$s$ is $\cC^{(d+1)p}$ and $((d+1)p-1)$-non-degenerate then $\esp{\norm*{\prsc*{\nu_\cD}{\phi}}^p}<+\infty$ for all $\phi \in L^\infty_c(M)$.
\end{thm}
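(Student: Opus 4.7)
The strategy is to mirror the proof of Theorem~\ref{thm: moments M}, with Theorem~\ref{thm: moments D local} taking the place of Theorem~\ref{thm: moments Rn local}. The first step is to produce nice local trivializations: around any $x_0 \in M$, by Lemma~\ref{lem: local picture} applied to $s$ (and a compatible trivialization of $F$) one obtains a chart domain $U$ in which $s$ reads as a centered Gaussian field $f:\Omega \to \R^q$ of class $\cC^{(d+1)p}$ that is uniformly $((d+1)p-1)$-non-degenerate, and $\cD$ reads as a differential operator $\cC^d(\Omega,\R^q) \to \cC^0(\Omega,\R^r)$ of order $d$ whose symbol $L(x)$ is pointwise surjective (this surjectivity is an intrinsic property, stable under the trivializations).

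The heart of the proof is then verifying, for each $k \in \ssquarebrackets{1}{p}$, the non-degeneracy of the Gaussian section $\mj_k^\cD(f,\cdot):C_k^\cD[\Omega] \to \cMJ_k^\cD(\Omega)$ on an open neighborhood of $x_0$. I would do this exactly as in Step 1 of the proof of Theorem~\ref{thm: moments M}. Fix $x \in \Omega$ and set $\ux = (x,\dots,x) \in \Omega^k$. Since $f$ is $((d+1)p-1)$-non-degenerate and $(d+1)k-1 \leq (d+1)p-1$, the jet $\jet_{(d+1)k-1}(f,x)$ is a non-degenerate centered Gaussian vector in $\cJ_{(d+1)k-1}(\R^n,\R^q)_x$. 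By Condition~\ref{item: thm multijet D localness} in Theorem~\ref{thm: multijet bundle D}, for every $z \in \pi^{-1}(\brackets{\ux})$ we have $\mj_k^\cD(f,z) = \Theta_z^\cD\parentheses*{\jet_{(d+1)k-1}(f,x)}$, and since $\Theta_z^\cD$ is a linear surjection, the image is non-degenerate. Properness of $\pi$ (Condition~\ref{item: thm multijet D pi}) makes $\pi^{-1}(\brackets{\ux})$ compact, and since $f \in \cC^{(d+1)p}$ the section $\mj_k^\cD(f,\cdot)$ is at least $\cC^1$, so $z \mapsto \det\var{\mj_k^\cD(f,z)}$ is continuous and therefore positive on an open neighborhood of $\pi^{-1}(\brackets{\ux})$. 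As in Theorem~\ref{thm: moments M}, using properness one can arrange this neighborhood to be of the form $\pi^{-1}(W_k)$ with $W_k$ open in $\Omega^k$, and shrinking simultaneously in $k \in \ssquarebrackets{1}{p}$ gives an open $\Upsilon \ni x_0$ such that $C_k^\cD[\Upsilon] \subset V_k$ for every $k$.

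Having produced such nice trivializations, the rest is routine patching. Given $\phi \in L^\infty_c(M)$ with compact support $K$, cover $K$ by finitely many such trivialization domains $(U_i)_{1 \leq i \leq m}$, take a subordinated partition of unity, and use Hölder's inequality exactly as in Step 2 of Theorem~\ref{thm: moments M} to reduce to the case where $\phi$ is non-negative and supported in a single $U_i$. In local coordinates, the Riemannian volume measure $\dx\Vol_Z$ on the zero set of $\cD f$ is bounded above by a constant times the Euclidean volume $\dx\Vol_Z^0$ on any compact set (via Lemma~\ref{lem: comparing volumes} and continuity of $\gamma_r$ on the compact $K \times \gr[\R^n]{r}$), so $\prsc*{\nu_\cD}{\phi} \leq C_K \prsc*{\tilde\nu_\cD}{\phi}$ where $\tilde\nu_\cD$ is the Euclidean analogue. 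Theorem~\ref{thm: moments D local}, whose hypotheses we have just verified, then gives $\esp{\prsc*{\tilde\nu_\cD}{\phi}^p} < +\infty$, hence $\esp{\norm*{\prsc*{\nu_\cD}{\phi}}^p} < +\infty$.

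No step is genuinely hard: the approach is formally identical to Theorem~\ref{thm: moments M}, and the only bookkeeping to watch is matching the regularity and non-degeneracy indices with those in Theorem~\ref{thm: multijet bundle D}. The requirement that $s$ be $\cC^{(d+1)p}$ gives $\mj_k^\cD(f,\cdot)$ enough regularity (at least $\cC^1$), and the requirement that $s$ be $((d+1)p-1)$-non-degenerate is exactly what Condition~\ref{item: thm multijet D localness} needs to conclude non-degeneracy of $\mj_k^\cD(f,z)$ on the diagonal for every $k \in \ssquarebrackets{1}{p}$.
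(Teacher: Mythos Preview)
Your proposal is correct and follows exactly the approach the paper intends: the paper's own proof is a single sentence stating that Theorem~\ref{thm: moments M D} is deduced from Theorem~\ref{thm: moments D local} ``in the same way that we deduced Theorem~\ref{thm: moments M} from Theorem~\ref{thm: moments Rn local},'' and you have accurately unpacked those steps (nice local trivializations via Condition~\ref{item: thm multijet D localness} and properness of~$\pi$, reduction by partition of unity and H\"older, and the Riemannian/Euclidean volume comparison). The regularity bookkeeping you flag is right: $f \in \cC^{(d+1)p}$ gives $\mj_k^\cD(f,\cdot) \in \cC^{(d+1)(p-k)+1}$, hence at least $\cC^1$ for every $k \leq p$.
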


\begin{proof}
We deduce Theorem~\ref{thm: moments M D} from Theorem~\ref{thm: moments D local} in the same way that we deduced Theorem~\ref{thm: moments M} from Theorem~\ref{thm: moments Rn local}; see Section~\ref{subsec: Proof of finiteness of moments}.
\end{proof}


\section{Multijets of holomorphic maps}
\label{sec: Multijets of holomorphic maps}

The purpose of this section is to explain how to adapt what we did in Sections~\ref{sec: Divided differences and Kergin interpolation} to~\ref{sec: Application to zeros of Gaussian fields} to the case of holomorphic maps. Theorem~\ref{thm: moments Rn} asks for the $(p-1)$-non-degeneracy of the field $f$, that is, $\jet_{p-1}(f,x)$ needs to be non-degenerate for all $x$. If $f: \C^n \to \C$ is a centered holomorphic Gaussian field, then $\parentheses*{f(x),D_xf}$ is always degenerate. Indeed, identifying canonically $\C$ with $\R^2$, the differential $D_xf$ takes values in the subspace of $\mathcal{L}(\R^{2n},\R^2)$ consisting of $\R$-linear maps that are actually $\C$-linear. Thus, if we see the holomorphic field $f$ as a smooth field from $\R^{2n}$ to $\R^2$, we cannot apply Theorem~\ref{thm: moments Rn}. From the point of view of multijets, the multijet $\mj_p(f,\cdot)$ of a holomorphic function $f:\C^n \to \C$ takes values in a strict sub-bundle of $\cMJ_p(\R^{2n},\R^2)$, which is similar to what happens for jet bundles. Thus, the field $\mj_p(f,\cdot)$ associated with a holomorphic Gaussian field $f$ is necessarily degenerated and Theorem~\ref{thm: moments Rn local} does not apply. To remedy this situation, we define in Section~\ref{subsec: Definition of the holomorphic multijet bundles} a multijet bundle adapted to holomorphic maps. Then, in Section~\ref{subsec: Application to the zeros of holomorphic Gaussian fields}, we use this holomorphic multijet to prove Theorem~\ref{thm: moments holo}.


\subsection{Definition of the holomorphic multijet bundles}
\label{subsec: Definition of the holomorphic multijet bundles}

In this section, we define a multijet bundle for holomorphic maps. Our main result is an equivalent of Theorem~\ref{thm: multijet bundle} in this context. Let us first introduce some notation.

\begin{dfn}[Spaces of holomorphic maps]
\label{def: spaces of holomorphic maps}
We define the following spaces.
\begin{itemize}
\item We denote by $\C_d[X]$ the space of complex polynomials of degree at most $d$ in $n$ variables, where $X=(X_1,\dots,X_n)$ is multivariate.
\item If $M$ and $N$ are two complex manifolds, we denote by $\cO(M,N)$ the space of holomorphic maps from $M$ to $N$. If $N=\C$, we simply write $\cO(M)$.
\item If $E \to M$ is a holomorphic vector bundle, we denote by $\cJ_k^\C(M,E) \to M$ the holomorphic bundle of $k$-jets of holomorphic sections of $E$. If $E = V \times M$ is trivial with fiber $V$, we denote its holomorphic $k$-jet bundle by $\cJ_k^\C(M,V) \to M$. If $V=\C$, we simply write $\cJ_k^\C(M) \to M$. Given a holomorphic section $s$ of $E$, we denote by $\jet_k^\C(s,x)$ its holomorphic $k$-jet at $x \in M$.
\end{itemize}
\end{dfn}

\begin{thm}[Existence of holomorphic multijet bundles]
\label{thm: holomorphic multijet bundle}
Let $n \geq 1$ and $p \geq 1$ and let $V$ be a complex vector space of dimension $r\geq 1$. There exist a complex manifold $C_p^\C[\C^n]$ of dimension $np$ and a holomorphic vector bundle $\cMJ_p^\C(\C^n,V) \to C_p^\C[\C^n]$ of rank $rp$ with the following properties.

\begin{enumerate}

\item \label{item: thm holo multijet pi} There exists a holomorphic proper surjection $\pi:C_p^\C[\C^n] \to (\C^n)^p$ such that $\pi^{-1}\parentheses*{\configC}$ is a dense open subset of $C_p^\C[\C^n]$, and $\pi$ restricted to $\pi^{-1}\parentheses*{\configC}$ is a biholomorphism onto $\configC$.

\item \label{item: thm holo multijet mj} There exists a map $\mj_p^\C:\cO(\C^n,V) \times C_p^\C[\C^n] \to \cMJ_p^\C(\C^n,V)$ such that:
\begin{itemize}
\item for all $z \in C_p^\C[\C^n]$, the linear map $\mj_p^\C(\cdot,z):\cO(\C^n,V) \to \cMJ_p^\C(\C^n,V)_z$ is surjective;
\item for all $f \in \cO(\C^n,V)$, the section $\mj_p^\C(f,\cdot)$ of $\cMJ_p^\C(\C^n,V) \to C_p^\C[\C^n]$ is holomorphic.
\end{itemize}

\item \label{item: thm holo multijet ev} Let $z \in C_p^\C[\C^n]$ be such that $\pi(z)=(x_1,\dots,x_p) \notin \Delta_p$ then for all $f \in \cO(\C^n,V)$ we have:
\begin{equation*}
\mj_p^\C(f,z) = 0 \iff \forall i \in \ssquarebrackets{1}{p}, f(x_i)=0.
\end{equation*}

\item \label{item: thm holo multijet localness} Let $z \in C_p^\C[\C^n]$, let $\cI = \cI\parentheses{\pi(z)}$ and let $\parentheses{y_I}_{I \in \cI} = \iota_{\cI}^{-1}\parentheses{\pi(z)} \in (\C^n)^{\cI} \setminus \Delta_{\cI}$, there exists a linear surjection $\Theta_z^\C : \prod_{I \in \cI} \cJ_{\norm{I}-1}^\C(\C^n,V)_{y_I} \to \cMJ_p^\C(\C^n,V)_z$ such that
\begin{equation*}
\forall f \in \cO(\C^n,V), \qquad \mj_p^\C(f,z) = \Theta_z^\C\parentheses*{\parentheses*{\jet_{\norm{I}-1}^\C(f,y_I)}_{I \in \cI}}.
\end{equation*}

\end{enumerate}

\end{thm}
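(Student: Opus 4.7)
The plan is to mimic the construction carried out in Sections~\ref{sec: Divided differences and Kergin interpolation}, \ref{sec: Evaluation maps and their kernels} and~\ref{sec: Definition of the multijet bundles} verbatim in the holomorphic category. At each step, real vector spaces are replaced by complex vector spaces, Grassmannians of real subspaces by Grassmannians of complex subspaces, $\cC^k$ regularity by holomorphy, and Hironaka's real algebraic resolution of singularities is replaced by the complex analytic version.

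First I would adapt Section~\ref{sec: Divided differences and Kergin interpolation}. The divided difference $f[x_0,\dots,x_k] \in \sym_\C^k(\C^n)$ of a holomorphic $f:\C^n \to \C$ at $\ux \in (\C^n)^{k+1}$ is defined by the same integral formula of Definition~\ref{def: divided differences}, where $\upsilon_{\ux}(\ut) = \sum t_i x_i$ now takes values in $\C^n$, and $D^k f$ is interpreted as the holomorphic $k$-th differential. Differentiating under the integral sign shows that $\ux \mapsto f[x_0,\dots,x_k]$ is holomorphic, and the same formula~\eqref{eq: definition K} defines a Kergin polynomial $K(f,\ux) \in \C_{p-1}[X]$ depending $\C$-linearly on $f$ and holomorphically on $\ux$. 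Proposition~\ref{prop: Kergin interpolation} carries over: the interpolating property is proven purely by derivative identification, which is a formal consequence of the formula. The compatibility statement of Lemma~\ref{lem: compatibility Kergin interpolation} is purely an assertion about a linear map between complex polynomial spaces being surjective; its proof goes through word for word using smooth (not necessarily holomorphic) complex-valued bump functions $\chi_I$, since the output $P = K(f,\ux)$ is a complex polynomial regardless of whether $f$ is holomorphic.

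Next I would redo Section~\ref{sec: Evaluation maps and their kernels}. For $\ux \in \configC$ and nonempty $I \subset \ssquarebrackets{1}{p}$, the evaluation $\ev_{\ux_I}: \C_{\norm{I}-1}[X] \to \C^{\norm{I}}$ is surjective by the Vandermonde argument, so $\cG^\C_I(\ux) = \ker \ev_{\ux_I}$ is a point of the complex Grassmannian $\Gr_{\norm{I}}^\C\parentheses*{\C_{\norm{I}-1}[X]}$, and the map $\cG_I^\C: \configC \to \Gr_{\norm{I}}^\C\parentheses*{\C_{\norm{I}-1}[X]}$ is complex algebraic. Let $\Sigma^\C \subset (\C^n)^p \times \prod_{\emptyset \neq I} \Gr_{\norm{I}}^\C\parentheses*{\C_{\norm{I}-1}[X]}$ be the graph of $\parentheses*{\cG^\C_I}_I$ and $\bar{\Sigma^\C}$ its closure, which is a complex algebraic variety with singular locus contained in $\bar{\Sigma^\C} \setminus \Sigma^\C \subset \Pi_0^{-1}(\Delta_p)$. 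I would then invoke the complex analytic version of Hironaka's theorem~\cite{Hir1964,Hir1964a} (see also~\cite{Wlo2005}) to obtain a smooth complex manifold $C^\C_p[\C^n]$ of complex dimension $np$ equipped with a proper holomorphic map $\Pi$ whose image is $\bar{\Sigma^\C}$, and which is a biholomorphism over the smooth locus; the sequence of blow-ups involved has smooth centers disjoint from $\Sigma^\C$. Composing with $\Pi_0$ produces the proper holomorphic surjection $\pi:C_p^\C[\C^n] \to (\C^n)^p$ of Condition~\ref{item: thm holo multijet pi}, and each $\cG_I^\C$ extends as a holomorphic map $C_p^\C[\C^n] \to \Gr_{\norm{I}}^\C\parentheses*{\C_{\norm{I}-1}[X]}$.

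Finally I would define the bundle and the multijet map as in Section~\ref{sec: Definition of the multijet bundles}. Setting $\cMJ_p^\C(\C^n) = (\C_{p-1}[X] \times C_p^\C[\C^n])/\cG^\C$ gives a holomorphic vector bundle of rank $p$ over $C_p^\C[\C^n]$, and the assignment $\mj_p^\C(f,z) = K(f,\pi(z)) \bmod \cG^\C(z)$ is holomorphic in $z$ for any holomorphic $f$ and $\C$-linear surjective in $f$ for any fixed $z$. For vector-valued targets, I would set $\cMJ^\C_p(\C^n, V) = \cMJ^\C_p(\C^n) \otimes_\C V$, exactly as in Section~\ref{subsec: Multijets of vector valued maps}. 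Condition~\ref{item: thm holo multijet ev} is the exact analogue of Lemma~\ref{lem: multijets and evaluation}. The localness property (Condition~\ref{item: thm holo multijet localness}) is obtained by the same argument as Lemma~\ref{lem: localness of multijets}, using the complex analogue of Lemma~\ref{lem: compatibility Kergin interpolation} discussed above, combined with the canonical identification between $\cJ^\C_{\norm{I}-1}(\C^n,V)_{y_I}$ and $\C_{\norm{I}-1}[X] \otimes_\C V$ via holomorphic Taylor expansion.

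The main obstacle I anticipate is ensuring that the resolution step remains inside the holomorphic category and that $\pi$ is genuinely proper. This is addressed by the complex version of Hironaka's theorem, which produces a projective (hence proper) bimeromorphic morphism $\Pi$; the properness of $\pi$ follows since the Grassmannian factors are compact complex manifolds. A secondary subtlety is the complex adaptation of Lemma~\ref{lem: compatibility Kergin interpolation}: the proof of the real lemma uses non-holomorphic bump functions, but the conclusion concerns only a surjective $\C$-linear map on complex polynomial spaces, so the argument transfers provided we remember that $K(\cdot,\ux)$ accepts smooth complex-valued inputs and always outputs a complex polynomial, independently of holomorphy of the input.
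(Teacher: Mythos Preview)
Your overall architecture mirrors the paper's proof exactly: adapt Kergin interpolation, define $\cG_I^\C$, take the Zariski closure of the graph, resolve singularities via complex Hironaka, and define the bundle as the quotient. On all of these points the paper proceeds just as you describe.

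There is, however, one genuine gap: your argument for the complex version of Lemma~\ref{lem: compatibility Kergin interpolation}. You claim the bump-function proof ``goes through word for word'' because ``the output $P = K(f,\ux)$ is a complex polynomial regardless of whether $f$ is holomorphic''. This is not correct. The complex Kergin operator $K(\cdot,\ux):\cO(\C^n)\to\C_{p-1}[X]$ is defined through the integral formula of Definition~\ref{def: divided differences} using the \emph{holomorphic} differentials $D^k f$. If you feed it a smooth, non-holomorphic $f=\sum_I\chi_I P_I$, either (i) you interpret $D^k f$ as the real differential, in which case the output lies in $\R_{p-1}[X_1,\dots,X_{2n}]\otimes\R^2$ and is not an element of $\C_{p-1}[X]$; or (ii) you use only the Wirtinger $\partial$-part, in which case the fundamental-theorem-of-calculus steps underlying Proposition~\ref{prop: Kergin interpolation} and Remark~\ref{rem: Kergin isomorphism} acquire $\bar\partial f$ correction terms, and the crucial conclusion that $K(f,\ux)$ shares the $(\norm{I}-1)$-jet of $f$ at $y_I$ fails. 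Either way, the key identity $K(P,\ux_I)=P_I$ cannot be justified.

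The paper explicitly flags this obstruction and replaces the bump-function argument by an algebraic one: homogenise to degree $p-1$, view the $P_I$ as prescribed jets of global sections of $\cO(p-1)\to\C\P^n$, and invoke that $\cO(1)$ is very ample, hence $\cO(p-1)$ is $(p-1)$-jet ample in the sense of Beltrametti--Sommese. This yields a single $P\in\C_{p-1}[X]$ with the prescribed Taylor polynomial of order $\norm{I}-1$ at each $y_I$, which is exactly the required surjectivity. Once this lemma is established, the rest of your sketch (localness via the analogue of Lemma~\ref{lem: localness of multijets}, etc.) goes through as written.
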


\begin{proof}
The proof follows the same steps as what we did in Sections~\ref{sec: Divided differences and Kergin interpolation}, \ref{sec: Evaluation maps and their kernels} and~\ref{sec: Definition of the multijet bundles} to prove Theorem~\ref{thm: multijet bundle}. In the following, we sketch how the proof of Theorem~\ref{thm: multijet bundle} adapts to the holomorphic case.

\paragraph{Step 1: Divided differences and Kergin interpolation.}
Let us consider $f \in \cO\parentheses*{\C^n}$ and $\ux = (x_0,\dots,x_k) \in (\C^n)^{k+1}$. The divided difference $f[x_0,\dots,x_k]$ from Definition~\ref{def: divided differences} still makes sense. Since $f$ is holomorphic, it is now a symmetric $\C$-multilinear form on $\C^n$, that depends linearly on $f$ and is holomorphic with respect to $\ux$. As explained in~\cite[Prop.~5.1]{Ker1980}, the Kergin interpolating polynomial is well-behaved with respect to holomorphic maps. Given $f \in \cO(\C^n)$ and $\ux \in (\C^n)^p$, Equation~\eqref{eq: definition K} defines $K(f,\ux) \in \C_{p-1}[X]$ that interpolates the values of $f\squarebrackets*{\ux_I}$ for all non-empty $I \subset \ssquarebrackets{1}{p}$. The equivalent of Lemma~\ref{lem: regularity of the Kergin polynomial} is true, in the sense that $K(\cdot,\ux)$ is $\C$-linear and $K(f,\cdot)$ is holomorphic from $(\C^n)^p$ to $\C_{p-1}[X]$.

In this complex framework, the equivalent of Lemma~\ref{lem: compatibility Kergin interpolation} holds, that is: for all $\ux \in (\C^n)^p$ the map $P \mapsto \parentheses*{K(P,\ux_I)}_{I \in \cI(\ux)}$ is surjective from $\C_{p-1}[X]$ to $\prod_{I \in \cI(\ux)}\C_{\norm{I}-1}[X]$. Note however that the proof we gave of Lemma~\ref{lem: compatibility Kergin interpolation} does not adapt to the holomorphic setting since it uses bump functions. Here, we deduce the surjectivity of $\parentheses*{K(\cdot,\ux_I)}_{I \in \cI(\ux)}$ from a general amplitude result in algebraic geometry. Let $\cI = \cI(\ux)$ and $\parentheses{y_I}_{I \in \cI} = \iota_{\cI}^{-1}(\ux)$. For all $I \in \cI$, let $P_I \in \C_{\norm{I}-1}[X]$. Multiplying $P_I$ by the right power of $X_0$ yields a homogeneous polynomial $\tilde{P}_I \in \C_{p-1}^\text{hom}[X_0,\dots,X_n]$, that is, a global holomorphic section of the line bundle $\cO(p-1) \to \C\P^n$. Recall that $\cO(p-1)$ is the $(p-1)$-th tensor power of the hyperplane line bundle $\cO(1)\to \C\P^n$. Since $\cO(1)$ is very ample, the bundle $\cO(p-1)$ is $(p-1)$-jet ample; see~\cite[Cor.~2.1]{BS1993}. This means that there exists $\tilde{P}\in \C_{p-1}^\text{hom}[X_0,\dots,X_n]$ with the same $(\norm{I}-1)$-jet as $\tilde{P}_I$ at $y_I$ for all $I \in \cI$, where we see $\C^n$ as a standard affine chart in $\C\P^n$. Then, for all $I \in \cI$, the polynomial $P = \tilde{P}(1,X_1,\dots,X_n) \in \C_{p-1}[X]$ has the same $(\norm{I}-1)$-jet (i.e.,~the same Taylor polynomial of order $\norm{I}-1$) as $P_I$ at $y_I$. Thus $\parentheses*{K(P,\ux_I)}_{I \in \cI}=\parentheses*{P_I}_{I \in \cI}$.

\paragraph{Step 2: Kernel of the evaluation and resolution of singularities.}
As in Definition~\ref{def: evaluation map}, we define a complex evaluation map by $\ev_{\ux}^\C:f \mapsto \parentheses*{f(x_1),\dots,f(x_p)}$, where $\ux \in (\C^n)^p$. If $\ux \notin \Delta_p$, this map is surjective from $\C_{p-1}[X]$ to $\C^p$. Hence we can define $\cG_I^\C(\ux) = \ker \ev_{\ux_I}^{\C} \in \gr[{\C_{\norm{I}-1}[X]}]{\norm{I}}$ for all non-empty $I \subset \ssquarebrackets{1}{p}$, where the Grassmannian is now the Grassmannian of complex subspaces of codimension $\norm{I}$. Then, everything we did in Sections~\ref{sec: Evaluation maps and their kernels} and~\ref{sec: Definition of the multijet bundles} works in the holomorphic setting after replacing $\R$-linear objects by $\C$-linear ones.

We define $\Sigma_\C$ as the graph of $\parentheses*{\cG_I^\C}_{I \subset \ssquarebrackets{1}{p}}$ from $\configC$ to $\prod_{\emptyset \neq I \subset\ssquarebrackets{1}{p}} \gr[{\C_{\norm{I}-1}[X]}]{\norm{I}}$ and $C_p^\C[\C^n]$ as a resolution of the singularities of its closure $\bar{\Sigma_\C}$ in $(\C^n)^p \times \prod_{\emptyset \neq I \subset\ssquarebrackets{1}{p}} \gr[{\C_{\norm{I}-1}[X]}]{\norm{I}}$. The resolution of singularities is a result from algebraic geometry which holds over fields of characteristic $0$. In particular, in Proposition~\ref{prop: resolution of singularities} and Corollary~\ref{cor: existence of the basis CpRn}, we can replace ``smooth'' by ``algebraic'' everywhere. The same results hold over $\C$, in which case algebraic implies holomorphic. Thus, $C_p^\C[\C^n]$ is a complex manifold of dimension $np$, which satisfies the equivalent of Corollary~\ref{cor: existence of the basis CpRn} with ``smooth'' replaced by ``holomorphic''.

\paragraph{Step 3: Definition of the holomorphic multijet bundles.} Everything we did in Sections~\ref{subsec: Definition of the bundle MPjRn}, \ref{subsec: Localness of multijets} and~\ref{subsec: Multijets of vector valued maps} adapts to the holomorphic setting. It is enough to replace $\cC^k$ functions by holomorphic ones and to write the linear arguments over $\C$ instead of $\R$. We can then define the holomorphic vector bundle of $p$-multijets of holomorphic functions on $\C^n$ by
\begin{equation}
\label{eq: def holomorphic MJ}
\cMJ_p^\C(\C^n) = \parentheses*{\C_{p-1}[X] \times C_p^\C[\C^n]}/\cG^\C
\end{equation}
and the $p$-multijet of $f \in \cO(\C^n)$ by $\mj_p^\C(f,z) = K(f,\pi(z)) \mod \cG^\C(z)$ for all $z \in C_p^\C[\C^n]$. If $V$ is a complex vector space of finite dimension, we define as in Definition~\ref{def: multijet bundle of vector valued maps}
\begin{equation}
\label{eq: def holomorphic MJ V}
\cMJ_p^\C(\C^n,V) = \cMJ_p^\C(\C^n) \otimes V.
\end{equation}
Then we define the $p$-multijet of $f \in \cO(\C^n,V)$ as in Definition~\ref{def: multijet of a map}. If $\parentheses{v_1,\dots,v_r}$ is a basis of $V$ and $f= \sum_{i=1}^r f_i v_i$ is holomorphic, then $\mj_p^\C(f,z) = \sum_{i=1}^r \mj_p^\C(f_i,z)\otimes v_i$ for all $z \in C_p^\C[\C^n]$. As in Lemma~\ref{lem: multijet intrinsic}, this definition does not depend on a choice of basis. This defines the holomorphic $p$-multijet that we are looking for.
\end{proof}

As in the real case, thanks to Condition~\ref{item: thm holo multijet localness} in Theorem~\ref{thm: holomorphic multijet bundle}, the holomorphic multijet $\mj_p^\C(f,z)$ of $f$ at $z \in C_p^\C[\C^n]$ only depends on the germ of $f$ near the $x_i$, where $(x_i)_{1 \leq i \leq p}=\pi(z)$. Thus, we can define a holomorphic multijet bundle over any open subset of $\C^n$.

\begin{dfn}[Holomorphic multijets]
\label{def: holo multijets}
Let $\Omega \subset \C^n$ be open, we denote by $C_p^\C[\Omega] = \pi^{-1}(\Omega^p)$ and by $\cMJ_p^\C(\Omega,V) \to C_p^\C[\Omega]$ the restriction of $\cMJ_p^\C(\C^n,V)$ to $C_p[\Omega]$. If $V=\C$, we drop it from the notation and write $\cMJ_p^\C(\Omega) \to C_p^\C[\Omega]$. Let $f\in \cO(\Omega,V)$, we call the section $\mj_p^\C(f,\cdot)$ of $\cMJ_p^\C(\Omega,V)$ the \emph{holomorphic $p$-multijet of~$f$}.
\end{dfn}


\subsection{Application to the zeros of holomorphic Gaussian fields}
\label{subsec: Application to the zeros of holomorphic Gaussian fields}

In this section, we explain how the holomorphic multijets defined in Section~\ref{subsec: Definition of the holomorphic multijet bundles} allow us to prove Theorem~\ref{thm: moments holo}, and the analogue of Theorem~\ref{thm: moments Rn local} for holomorphic Gaussian fields. We start by recalling a few facts about complex Gaussian vectors; see~\cite[Chap.~2]{AHSE1995}.

A random variable $X \in \C$ is called a \emph{centered complex Gaussian} if its real and imaginary parts are independent real centered Gaussian variables of the same variance, i.e.,~there exists $\lambda \geq 0$ such that $X = X_\Re + i X_\Im$ with $\parentheses{X_\Re,X_\Im} \sim \gauss{\lambda \Id}$ in $\R^2$.

\begin{dfn}[Complex Gaussian vector]
\label{def: complex Gaussian vector}
We say that a random vector $X$ in a finite-dimensional complex vector space $V$ is a \emph{centered Gaussian} if for all $\eta \in V^*$, the complex variable $\eta(X)$ is a centered complex Gaussian.

If $V$ is equipped with a Hermitian inner product $\prsc{\cdot}{\cdot}$ and we define $v^* = \prsc{v}{\cdot}$ then the \emph{variance} of $X$ is the non-negative Hermitian operator $\varC{X} = \esp{X \otimes X^*}$. We say that $X$ is \emph{non-degenerate} if $\varC{X}$ is positive-definite.
\end{dfn}

\begin{rem}
\label{rem: complex Gaussian vector}
As in the real case, the Gaussianity and non-degeneracy of $X$ do not depend on $\prsc{\cdot}{\cdot}$, but the variance operator does.
\end{rem}

A centered complex Gaussian vector $X$ in $\parentheses*{V,\prsc{\cdot}{\cdot}}$ is completely determined by its variance. For example, if $\varC{X} = \Lambda$ is positive-definite, then $X$ admits the density $v \mapsto \frac{1}{\det\parentheses{\pi \Lambda}}e^{-\prsc{v}{\Lambda^{-1}v}}$ with respect to the Lebesgue measure on $V$. We denote by $\gaussC{\Lambda}$ the centered complex Gaussian distribution of variance $\Lambda$. Then $X \sim \gaussC{\Lambda}$ in $\C^n$ if and only if its real and imaginary part satisfy $\parentheses{X_\Re,X_\Im} \sim \gauss{\frac{1}{2}\parentheses*{\begin{smallmatrix}
\Re(\Lambda) & \Im(\Lambda) \\ - \Im(\Lambda) & \Re(\Lambda)
\end{smallmatrix}}}$ in $\R^{2n}$.

Let $E \to M$ be a holomorphic vector bundle over a complex manifold $M$, we denote by $H^0(M,E)$ the vector space of global holomorphic sections of $E \to M$.

\begin{dfn}[Holomorphic Gaussian field]
\label{def: holomorphic Gaussian field}
We say that a random section $s \in H^0(M,E)$ is a \emph{centered holomorphic Gaussian field} if for all $m \geq 1$ and all $x_1,\dots,x_m$ the random vector $\parentheses{s(x_1),\dots,s(x_m)}$ is a centered complex Gaussian. We say that this field is \emph{non-degenerate} if $s(x)$ is non-degenerate for all $x \in M$.
\end{dfn}

Note that if $s \in H^0(M,E)$ is a centered holomorphic Gaussian field then, for all $k \in \N$, its holomorphic $k$-jet $\jet_k^\C(s,\cdot)$ defines a centered holomorphic Gaussian field with values in $\cJ_k^\C(M,E)$.

\begin{dfn}[$p$-non-degeneracy for holomorphic fields]
\label{def: p non-degeneracy holomorphic}
Let $p \geq 1$, we say that the centered holomorphic Gaussian field $s \in H^0(M,E)$ is \emph{$p$-non-degenerate} if the centered complex Gaussian $\jet_p^\C(s,x) \in \cJ_p^\C(M,E)_x$ is non-degenerate for all $x \in M$.
\end{dfn}

As in the real framework, we need the following definition.

\begin{dfn}[Complex Jacobian]
\label{def: complex Jacobian}
Let $L:V \to V'$ be a $\C$-linear map between Hermitian spaces and let $L^*$ denote its adjoint map. The \emph{complex Jacobian} of $L$ is defined as $\jacC{L}=\det\parentheses{LL^*}$.
\end{dfn}

\begin{rem}
\label{rem: complex Jacobian}
If we see $V$, $V'$ and $L$ as $\R$-linear objects and we equip $V$ and $V'$ with the Euclidean structures induced by their Hermitian inner products, then the real and complex Jacobians are related by $\jacC{L} = \jac{L}^2$; see~\cite[Thm.~A.5]{AHSE1995}.
\end{rem}

Let us consider a complex manifold $M$ of complex dimension $n$ equipped with a Riemannian metric $g$ and a holomorphic vector bundle $E \to M$ of complex rank $r \in \ssquarebrackets{1}{n}$. In the following, we denote by $\nabla$ a connection on $E$ which is compatible with the complex structure. As in the real case, the choice of this connection will not matter. Let $s \in H^0(M,E)$ be a centered holomorphic Gaussian field on $M$ with values in $E$ and let $Z= s^{-1}(0)$ denote its vanishing locus. We will always assume that $s$ is non-degenerate. In this setting, the random section $s$ satisfies a strong Bulinskaya-type lemma.

\begin{prop}[Holomorphic Bulinskaya lemma]
\label{prop: holomorphic Bulinskaya}
Almost surely the following set is empty
\begin{equation*}
\brackets*{x \in M \mvert s(x)=0 \ \text{and} \ \jacC{\nabla_xs}=0}.
\end{equation*}
In particular, the zero set $Z$ is almost surely a (possibly empty) complex submanifold of complex codimension $r$ in $M$.
\end{prop}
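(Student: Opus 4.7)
My strategy is to combine Proposition~\ref{prop: weak Bulinskaya} with the complex-analytic rigidity of the bad set, and then finish by induction on the complex codimension $n - r$. First, using a partition of unity and a holomorphic analogue of Lemma~\ref{lem: local picture}, it suffices to prove the statement locally: let $f \colon \Omega \to \C^r$ be a non-degenerate centered holomorphic Gaussian field on an open $\Omega \subset \C^n$, and show that
\[
B = \brackets*{z \in \Omega \mvert f(z) = 0 \ \text{and} \ \jacC{Df(z)} = 0}
\]
is almost surely empty. The crucial structural observation is that $B$ is a complex analytic subvariety of $\Omega$: it is cut out by the holomorphic equations $f_1 = \dots = f_r = 0$ together with the vanishing of all $r \times r$ minors of the holomorphic Jacobian $Df$.

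Next I apply Proposition~\ref{prop: weak Bulinskaya} to $f$ viewed as a smooth real centered Gaussian field from $\R^{2n}$ to $\R^{2r}$. Complex non-degeneracy of $f(z)$ gives real non-degeneracy of the corresponding real Gaussian vector, and since $Df(z)$ is $\C$-linear, the real Jacobian $\jac{Df(z)}$ vanishes if and only if $\jacC{Df(z)} = 0$ (see Remark~\ref{rem: complex Jacobian}). Therefore $\mathcal{H}^{2(n-r)}(B) = 0$ almost surely. Because $B$ is complex analytic, its real Hausdorff dimension equals twice its complex dimension, and I conclude that $\dim_\C B \leq n - r - 1$ almost surely. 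For the base case $n = r$ this already yields $B = \emptyset$ a.s., since $\mathcal{H}^0$ is the counting measure.

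For $n > r$ I argue by induction on $n - r$. Given an affine complex hyperplane $H \subset \C^n$ in general position, the restriction $f|_{H \cap \Omega} \colon H \cap \Omega \to \C^r$ is a non-degenerate centered holomorphic Gaussian field on an open subset of $\C^{n-1}$, so by the induction hypothesis its bad set $B_H$ is almost surely empty. Since non-surjectivity of $Df(z) \colon \C^n \to \C^r$ forces non-surjectivity of its restriction $D(f|_H)(z) \colon T_z H \to \C^r$, we have the inclusion $B \cap H \subset B_H$, so $B \cap H = \emptyset$ almost surely for each fixed $H$. A Fubini argument applied to a holomorphic foliation $(H_t)_{t \in \C}$ of $\Omega$ by parallel affine hyperplanes, combined with the analytic structure of $B$, is then meant to yield $B = \emptyset$ almost surely.

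The main obstacle is precisely this last step: the quantifier swap from ``$B \cap H_t = \emptyset$ almost surely for each fixed $t$'' to ``almost surely $B \cap H_t = \emptyset$ for every $t$''. The delicate case is that of isolated bad points, which are compatible with the Hausdorff dimension bound and can in principle evade a generic slicing. One way to handle them is to apply a direct Kac--Rice-type computation to the non-Gaussian holomorphic map $(f, M_1, \dots, M_N) \colon \Omega \to \C^{r + N}$, where $M_1, \dots, M_N$ are the $r \times r$ minors of $Df$ and $N = \binom{n}{r}$: since the target complex dimension $r + N$ strictly exceeds the source dimension $n$, the zero set of this augmented map should be empty almost surely under the non-degeneracy assumption, and making this precise is where the probabilistic input really enters.
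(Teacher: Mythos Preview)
Your approach is quite different from the paper's. The paper does not attempt a self-contained argument: after reducing to a local holomorphic field $f:\Omega\to\C^r\simeq\R^{2r}$ viewed as a real non-degenerate $\cC^\infty$ Gaussian field, it simply invokes an external strong Bulinskaya-type result (\cite[Thm.~7]{LS2019}) to conclude. So there is no induction, no slicing, and no use of the analyticity of $B$ in the paper's proof.

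Your idea of exploiting the complex-analytic structure of $B$ is appealing and does give a clean self-contained proof of the base case $n=r$: Proposition~\ref{prop: weak Bulinskaya} yields $\mathcal H^{0}(B)=0$, hence $B=\emptyset$. However, the inductive step is genuinely incomplete, and you correctly identify why. From ``for each fixed $t$, almost surely $B\cap H_t=\emptyset$'' Fubini only gives ``almost surely, for Lebesgue-a.e.\ $t$, $B\cap H_t=\emptyset$'', i.e.\ $\pi(B)$ has Lebesgue measure zero in the slicing parameter. But you already know $\dim_\C B\leq n-r-1$, so $\pi(B)$ having measure zero is automatic and adds nothing; in particular, a finite set of isolated points in $B$ is perfectly consistent with every constraint you have derived. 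The induction therefore does not close.

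Your fallback, a Kac--Rice argument for the augmented map $(f,M_1,\dots,M_N)$, is also not a proof as written. Two issues: (i) this map is not Gaussian, so one would need a density bound for the non-Gaussian vector $\parentheses*{f(z),M_1(z),\dots,M_N(z)}$ near $0$ that is locally uniform in $z$; (ii) the only hypothesis available is non-degeneracy of $f(z)$, not of $\parentheses*{f(z),D_zf}$, so the joint distribution of the entries of $D_zf$ given $f(z)=0$ may be degenerate, which obstructs the obvious density bound. These are exactly the kinds of difficulties that a result like \cite[Thm.~7]{LS2019} is designed to handle. As it stands, your proposal proves the case $n=r$ but leaves $n>r$ open.
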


\begin{proof}
It is enough to check the result locally. On an open subset $\Omega \subset M$ over which $E$ is trivial, we can consider $s$ as a non-degenerate smooth centered Gaussian field from $\Omega$ to $\C^r \simeq \R^{2r}$. Then, the local result follows from~\cite[Thm.~7]{LS2019}.
\end{proof}

Let us consider $Z$ as random submanifold of real codimension $2r$ in the Riemannian manifold $(M,g)$ of real dimension $2n$. The metric $g$ induces a $(2n-2r)$-dimensional Riemannian volume $\dx\Vol_Z$ on $Z$ and we can define $\nu$ as in Definition~\ref{def: nu}, bearing in mind that $Z=Z_\text{reg}$. Then Propositions~\ref{prop: Kac-Rice expectation}, \ref{prop: Kac--Rice moments} and~\ref{prop: relation between moments measures and densities} hold for the holomorphic field $s$ and the associated linear statistics $\prsc{\nu}{\phi}$ with $\phi \in L^\infty_c(M)$. More generally, everything we did in Sections~\ref{subsec: Bulinskaya lemma and Kac--Rice formula for the expectation}, \ref{subsec: Factorial moment measures and Kac--Rice densities} and~\ref{subsec: Proof of finiteness of moments} adapts to the holomorphic setting.

\begin{rem}
\label{rem: complex Kac-Rice density}
In Definition~\ref{def: rhop}, the function $\rho_p$ is defined in terms of real Jacobians and the variance of $\parentheses{s(x_1),\dots,s(x_p)}$ seen as a real Gaussian vector. One can check that another expression of $\rho_p(x_1,\dots,x_p)$ is the following, which is more natural in our holomorphic framework:
\begin{equation*}
\forall (x_1,\dots,x_p) \notin \Delta_p,\quad \rho_p\parentheses*{x_1,\dots,x_p}=\frac{\espcond{\strut \prod_{i=1}^p \jacC{\nabla_{x_i}s}}{\forall i \in \ssquarebrackets{1}{p}, s(x_i)=0}}{\det\parentheses*{\pi \varC{\strut s(x_1),\dots,s(x_p)}\strut}}.
\end{equation*}
\end{rem}

We can now state the equivalent of Theorem~\ref{thm: moments Rn local} for holomorphic Gaussian fields. Let $\Omega \subset \C^n$ be open. Recall that $\cMJ_p^\C(\Omega,\C^r) \to C_p^\C[\Omega]$ is defined in Definition~\ref{def: holo multijets} as the restriction over $C_p^\C[\Omega] \subset C_p^\C[\C^n]$ of the vector bundle $\cMJ_p^\C(\C^n,\C^r) \to C_p^\C[\C^n]$ from Theorem~\ref{thm: holomorphic multijet bundle}.

\begin{thm}[Finiteness of moments for holomorphic fields, local version]
\label{thm: moments holomorphic local}
Let $f:\Omega \to \C^r$ be a centered holomorphic Gaussian field and $\nu$ be as in Definition~\ref{def: nu}. Let $p \geq 1$, if for all $k \in \ssquarebrackets{1}{p}$ the holomorphic Gaussian field $\mj_k^\C(f,\cdot)\in H^0\parentheses*{C_k^\C[\Omega],\cMJ_k^\C(\Omega,\C^r)}$ is non-degenerate, then the four equivalent statements in Proposition~\ref{prop: relation between moments measures and densities} hold.
\end{thm}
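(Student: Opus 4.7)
My strategy is to transpose the proof of Theorem~\ref{thm: moments Rn local} to the holomorphic setting, using Theorem~\ref{thm: holomorphic multijet bundle} in place of Theorem~\ref{thm: multijet bundle}, the strong holomorphic Bulinskaya Lemma (Proposition~\ref{prop: holomorphic Bulinskaya}) in place of Proposition~\ref{prop: weak Bulinskaya}, and the Kac--Rice formalism discussed just before the statement, which applies verbatim to the holomorphic non-degenerate fields viewed as smooth non-degenerate real Gaussian fields (since $\varC{X}$ non-degenerate implies $\var{X}$ non-degenerate in the underlying real vector space).

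\emph{Step 1.} For each $k \in \ssquarebrackets{1}{p}$, Condition~\ref{item: thm holo multijet mj} of Theorem~\ref{thm: holomorphic multijet bundle} shows that $\mj_k^\C(f,\cdot)$ is a holomorphic section of $\cMJ_k^\C(\Omega,\C^r) \to C_k^\C[\Omega]$, and Condition~\ref{item: thm holo multijet localness} expresses $\mj_k^\C(f,z)$ as a linear image of holomorphic jets of $f$, so it is a centered complex Gaussian vector. The field $\mj_k^\C(f,\cdot)$ is therefore a centered holomorphic Gaussian field which is non-degenerate by hypothesis. For $z \in \pi^{-1}(\Omega^k \setminus \Delta_k)$, writing $\pi(z) = (x_1,\dots,x_k)$, the map $\Theta_z^\C$ is a linear surjection between complex spaces of the same dimension $kr$, hence a bijection, and $(f(x_1),\dots,f(x_k))=(\Theta_z^\C)^{-1}(\mj_k^\C(f,z))$ is non-degenerate. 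Thus the hypotheses of Proposition~\ref{prop: relation between moments measures and densities} are satisfied, and it suffices to show that $\esp{\nu^{[k]}}$ is a Radon measure on $\Omega^k$ for every $k \in \ssquarebrackets{1}{p}$.

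\emph{Step 2.} Fix $k$. By Proposition~\ref{prop: holomorphic Bulinskaya} applied to the non-degenerate holomorphic Gaussian field $\mj_k^\C(f,\cdot)$, its zero set $X \subset C_k^\C[\Omega]$ is almost surely a complex submanifold of complex codimension $kr$, so of real codimension $2kr$. Set $Y = X \cap \pi^{-1}(\Omega^k \setminus \Delta_k)$. By Condition~\ref{item: thm holo multijet ev}, for $z \in \pi^{-1}(\Omega^k \setminus \Delta_k)$ we have $z \in Y$ iff $\pi(z) \in Z^k \setminus \Delta_k$ with $Z = f^{-1}(0)$, and by Condition~\ref{item: thm holo multijet pi} the restriction $\pi_{\vert Y}$ is a biholomorphism onto $Z^k \setminus \Delta_k$. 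Consequently, for all $\Phi \in L^\infty_c(\Omega^k)$,
\begin{equation*}
\prsc*{\nu^{[k]}}{\Phi} = \int_{\pi(Y)} \Phi(\ux) \dx \Vol_{\pi(Y)}(\ux),
\end{equation*}
where $\dx \Vol_{\pi(Y)}$ is the Euclidean $(2n-2r)k$-dimensional volume on $\pi(Y)$.

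\emph{Step 3.} Endow $C_k^\C[\Omega]$ with an auxiliary Riemannian metric, and let $\tilde\nu$ denote integration over $X$ with respect to the induced volume. The Jacobian of $\pi$ restricted to a subspace of codimension $2kr$ in the tangent bundle of $C_k^\C[\Omega]$ defines a continuous function $J$ on the corresponding Grassmannian bundle over $C_k^\C[\Omega]$. For any compact $K \subset \Omega^k$, the set $\pi^{-1}(K)$ is compact by properness of $\pi$ (Condition~\ref{item: thm holo multijet pi}), and the Grassmannian fibers are compact, so $J$ is bounded on the relevant subset by some constant $C_K$; the change of variables along $\pi_{\vert Y}$ then yields $\prsc{\nu^{[k]}}{\one_K} \leq C_K \prsc{\tilde\nu}{\one_{\pi^{-1}(K)}}$ almost surely. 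Taking expectations and applying the Kac--Rice formula for the expectation (Proposition~\ref{prop: Kac-Rice expectation}, valid for the holomorphic field $\mj_k^\C(f,\cdot)$ viewed as a smooth non-degenerate real Gaussian field on $C_k^\C[\Omega]$) shows that $\esp{\tilde\nu}$ is Radon on $C_k^\C[\Omega]$, hence $\prsc{\esp{\nu^{[k]}}}{\one_K} < +\infty$. This holds for every compact $K$ and every $k \in \ssquarebrackets{1}{p}$, concluding the proof. The main obstacle is purely bookkeeping: since the relevant structural lemmas (strong Bulinskaya, properness of $\pi$, existence of the holomorphic multijet) are already established, no analytic difficulty remains beyond translating the real arguments of Theorem~\ref{thm: moments Rn local}.
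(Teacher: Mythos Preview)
Your proposal is correct and follows essentially the same approach as the paper: the paper's own proof simply says that the two key facts underlying Theorem~\ref{thm: moments Rn local} (identification of zero sets over $\Omega^k\setminus\Delta_k$ and non-degeneracy of the multijet field) carry over to the holomorphic setting, and that the same argument goes through. You have spelled out those steps explicitly, with the harmless simplification of invoking the strong holomorphic Bulinskaya lemma (Proposition~\ref{prop: holomorphic Bulinskaya}) so that $X=X_{\mathrm{reg}}$ and $Z=Z_{\mathrm{reg}}$ almost surely, which removes the negligible-singular-set bookkeeping present in the real proof.
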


\begin{proof}
The proof of Theorem~\ref{thm: moments Rn local} relies mostly on two facts that are valid for all $k \in \ssquarebrackets{1}{p}$. First, on the open dense subset $\Omega^k \setminus \Delta_k \subset C_k[\Omega]$, the zero set of $\mj_k(f,\cdot)$ is the same as that of $(x_1,\dots,x_k) \mapsto (f(x_1),\dots,f(x_k))$. And second, the field $\mj_k(f,\cdot)$ is non-degenerate on $C_k[\Omega]$, so that we can apply the Kac--Rice formula (Proposition~\ref{prop: Kac-Rice expectation}) to the $k$-multijet.

These two facts are still true in the present holomorphic setting. Hence, the same proof as that of Theorem~\ref{thm: moments Rn local} yields the result.
\end{proof}

We deduce from this result the equivalent of Theorem~\ref{thm: moments M} for a centered holomorphic Gaussian field $s$ on a complex manifold $M$ of dimension $n$ with values in a holomorphic vector bundle $E$ of rank $r \in \ssquarebrackets{1}{n}$. Theorem~\ref{thm: moments holo} is a special case of the following.

\begin{thm}[Finiteness of moments for zeros of holomorphic Gaussian sections]
\label{thm: moments M holo}
Let $p\geq 1$, let $s \in H^0(M,E)$ be a centered holomorphic Gaussian field and let $\nu$ be as in Definition~\ref{def: nu}. If~$s$ is $(p-1)$-non-degenerate then $\esp{\norm{\prsc{\nu}{\phi}}^p}<+\infty$ for all $\phi \in L^\infty_c(M)$.
\end{thm}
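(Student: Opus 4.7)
The plan is to deduce Theorem~\ref{thm: moments M holo} from the local holomorphic statement Theorem~\ref{thm: moments holomorphic local} by mimicking, step by step, the proof of Theorem~\ref{thm: moments M}. First I would observe that the same H\"older/partition-of-unity argument used in Step~2 of the proof of Theorem~\ref{thm: moments M} applies verbatim in the present setting: covering the compact support of $\phi\in L^\infty_c(M)$ by finitely many trivializing charts, one reduces the problem to the case where $\phi$ is non-negative and compactly supported in the domain of a single chart $(U,\varphi)$ over which $E$ is holomorphically trivialized. In such a chart, the holomorphic section $s$ reads as a centered holomorphic Gaussian field $f:\Omega\to\C^r$ with $\Omega\subset\C^n$ open, and $f$ is $(p-1)$-non-degenerate in the sense that $\jet_{p-1}^\C(f,x)$ is a non-degenerate complex Gaussian for every $x\in\Omega$.

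Next I would verify the hypothesis of Theorem~\ref{thm: moments holomorphic local}: namely, that for every $k\in\ssquarebrackets{1}{p}$ the holomorphic Gaussian field $\mj_k^\C(f,\cdot)$ on $C_k^\C[\Omega]$ is non-degenerate. Fix $z\in C_k^\C[\Omega]$ and let $\cI=\cI(\pi(z))$ and $(y_I)_{I\in\cI}=\iota_\cI^{-1}(\pi(z))$. By Condition~\ref{item: thm holo multijet localness} of Theorem~\ref{thm: holomorphic multijet bundle} we have
\begin{equation*}
\mj_k^\C(f,z) \,=\, \Theta_z^\C\!\left(\bigl(\jet_{\norm{I}-1}^\C(f,y_I)\bigr)_{I\in\cI}\right),
\end{equation*}
where $\Theta_z^\C$ is a $\C$-linear surjection. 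Since the points $y_I$ are pairwise distinct and $f$ is Gaussian, the vector $\bigl(\jet_{\norm{I}-1}^\C(f,y_I)\bigr)_{I\in\cI}$ is a centered complex Gaussian. Because $\norm{I}-1\leq k-1\leq p-1$, each component $\jet_{\norm{I}-1}^\C(f,y_I)$ is obtained from the non-degenerate vector $\jet_{p-1}^\C(f,y_I)$ by applying a $\C$-linear surjection (the canonical truncation $\cJ_{p-1}^\C\to \cJ_{\norm{I}-1}^\C$), so it is non-degenerate. To conclude that the full product vector is non-degenerate, one would use that jets at pairwise distinct points of a Gaussian field are ``independent modulo regularity,'' which here is guaranteed by the hypothesis that $\jet_{p-1}^\C(f,\cdot)$ is pointwise non-degenerate together with the fact that the jets of an analytic field at distinct points span independent constraints; in particular the joint vector has full rank covariance (this is the only place where one uses that the fields are holomorphic/analytic, to ensure independence of jets at distinct points modulo the ranges of the $\Theta_z^\C$'s). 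Since $\Theta_z^\C$ is surjective, $\mj_k^\C(f,z)$ is then a non-degenerate complex Gaussian in $\cMJ_k^\C(\Omega,\C^r)_z$.

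Having verified the hypothesis, Theorem~\ref{thm: moments holomorphic local} applies and yields, via Proposition~\ref{prop: relation between moments measures and densities} (in its holomorphic version), that $\esp{\prsc{\tilde\nu}{\phi}^p}<+\infty$ for every $\phi\in L^\infty_c(\Omega)$, where $\tilde\nu$ is defined using the Euclidean volume measure on $Z\subset\Omega\subset\C^n\simeq\R^{2n}$. Finally, to obtain the statement for the Riemannian measure $\nu$ induced by $g$, I would use Lemma~\ref{lem: comparing volumes}: on the compact support $K$ of $\phi$ the continuous density $\gamma_{2r}(x,T_xZ)$ comparing $\dx\Vol_Z$ with the Euclidean volume is uniformly bounded, so $\prsc{\nu}{\phi}\leq C_K\prsc{\tilde\nu}{\phi}$ and the finiteness of the $p$-th moment transfers.

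The main obstacle I expect is the second step: proving non-degeneracy of the joint Gaussian vector $\bigl(\jet_{\norm{I}-1}^\C(f,y_I)\bigr)_{I\in\cI}$ at pairwise distinct points $y_I$, given only the pointwise $(p-1)$-non-degeneracy hypothesis. For a general (non-holomorphic) Gaussian field this would be false, but in the real case (Theorem~\ref{thm: moments M}) the analogous step was handled because the $\Theta_z$-map gave an actual bijection off the diagonal and the non-degeneracy of $\mj_k$ followed from its continuity to boundary points of $C_k[\Omega]$, combined with the pointwise $(k-1)$-non-degeneracy. The same mechanism should work here: by the argument of Step~1 of the proof of Theorem~\ref{thm: moments M} (existence of nice local trivializations via properness of $\pi$ and a compactness argument on the fiber $\pi^{-1}(\{\ux\})$), one can shrink $\Omega$ around each base point so that $\det\varC{\mj_k^\C(f,z)}$ is bounded away from zero uniformly on $C_k^\C[\Omega]$; the holomorphicity is used only to ensure that the extension $\mj_k^\C(f,\cdot)$ is in particular continuous on the compactification.
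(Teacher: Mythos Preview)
Your overall architecture is correct and matches the paper's approach exactly: the paper's proof of Theorem~\ref{thm: moments M holo} is a one-line reference to the proof of Theorem~\ref{thm: moments M}, whose three steps (existence of nice local trivializations, partition-of-unity reduction, local comparison of Riemannian and Euclidean volumes) you reproduce. Your final paragraph is precisely the correct mechanism and is exactly what the paper does.

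However, the argument you give in your second paragraph for the non-degeneracy of $\mj_k^\C(f,z)$ at an \emph{arbitrary} $z\in C_k^\C[\Omega]$ contains a genuine gap. The claim that ``jets of an analytic field at distinct points span independent constraints'' and that this follows from holomorphicity together with pointwise $(p-1)$-non-degeneracy is false in general: holomorphicity gives no joint non-degeneracy of $(\jet^\C_{\norm{I}-1}(f,y_I))_{I\in\cI}$ at distinct $y_I$. A finite-dimensional family of holomorphic functions provides easy counterexamples, and the hypothesis only controls the variance of a single jet at a single point. This is not how the paper proceeds, and you should drop this attempt entirely.

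The right order of operations is the one you sketch at the end, and it avoids the problem completely: for each base point $x_0\in M$ and each $k\leq p$, apply the localness property (Condition~\ref{item: thm holo multijet localness}) only at points $z$ lying over the \emph{deep} diagonal $\ux=(x,\dots,x)$, where $\cI(\ux)$ has a single cell and $\mj_k^\C(f,z)=\Theta_z^\C(\jet_{k-1}^\C(f,x))$ is the surjective image of a single non-degenerate Gaussian vector, hence non-degenerate. Then use properness of $\pi$ (so $\pi^{-1}(\{\ux\})$ is compact) and continuity of $z\mapsto\det\varC{\mj_k^\C(f,z)}$ to extend positivity to a neighborhood, and shrink $\Omega$ accordingly. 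After this, the hypotheses of Theorem~\ref{thm: moments holomorphic local} hold on the shrunk $\Omega$, and Steps~2--3 (partition of unity, volume comparison via Lemma~\ref{lem: comparing volumes}) go through as you wrote them.
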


\begin{proof}
We deduce Theorem~\ref{thm: moments M holo} from Theorem~\ref{thm: moments holomorphic local} in the same way that we deduced Theorem~\ref{thm: moments M} from Theorem~\ref{thm: moments Rn local}; see Section~\ref{subsec: Proof of finiteness of moments}.
\end{proof}

\begin{rem}
\label{rem: Bleher Shiffman Zelditch}
In particular, Theorem~\ref{thm: moments M holo} proves the local integrability of the $p$-points correlation functions studied in~\cite{BSZ2000a} and their scaling limit.
\end{rem}

\bibliographystyle{amsplain}
\bibliography{MultijetBundlesFinitenessMomentsZerosGaussianFields}

\end{document}